%

%
\documentclass[12pt]{amsart}

\usepackage{hyperref}
\usepackage{amssymb, amsthm, amsmath, graphicx, enumerate, mathptmx}

\usepackage{cleveref}  

\usepackage[all]{xy}

\numberwithin{equation}{section}

%
%

%
%

\newcommand{\N}{\mathbb{N}}

\newcommand{\Q}{\mathbb{Q}}
\newcommand{\R}{\mathbb{R}}

\newcommand{\Z}{\mathbb{Z}}
%
%
\newcommand{\cA}{\mathcal{A}}

\newcommand{\Fc}{\mathcal{F}}

\newcommand{\Hc}{\mathcal{H}}

\newcommand{\cL}{\mathcal{L}}
\newcommand{\Ls}{\mathcal{L}_{\rm std}}
\newcommand{\Lm}[1]{\mathcal{L}_{{\rm std}, #1}}

\newcommand{\cN}{\mathcal{N}}
\newcommand{\Pc}{\mathcal{P}}
\newcommand{\cP}{\mathcal{P}}
\newcommand{\cQ}{\mathcal{Q}}

%
%
\newcommand{\Ab}{{\bf A}}


%
%
\newcommand{\Fi}[1]{{\bf F}^{\FI, #1}}
\newcommand{\Fo}[1]{{\bf F}^{\FIO, #1}}

\newcommand{\Fb}{{\bf F}}
\newcommand{\Gb}{{\bf G}}

\newcommand{\Nb}{{\bf N}}
\newcommand{\Pb}{{\bf P}}
\newcommand{\Qb}{{\bf Q}}
\newcommand{\bP}{{\bf P}}

\newcommand{\Xb}{{\bf X}}
\newcommand{\XI}[1]{{\bf X}^{\FI, #1}}
\newcommand{\XO}[1]{{\bf X}^{\FIO, #1}}

%
%
\newcommand{\bF}{\mathbf{F}}
\newcommand{\Ib}{\mathbf{I}}
\newcommand{\Jb}{\mathbf{J}}
\newcommand{\M}{\mathbf{M}}
\newcommand{\x}{\mathbf{x}}

%
%
\DeclareMathOperator{\Alg}{Alg}

\DeclareMathOperator{\depth}{depth}

\DeclareMathOperator{\FI}{FI}

\DeclareMathOperator{\Hom}{Hom}

\DeclareMathOperator{\id}{id}

\DeclareMathOperator{\ini}{in}

\DeclareMathOperator{\lc}{lc}

\DeclareMathOperator{\lm}{lm}
\DeclareMathOperator{\lt}{lt}
\DeclareMathOperator{\Mod}{Mod}
\DeclareMathOperator{\Mon}{Mon}

\DeclareMathOperator{\FIO}{OI}
\DeclareMathOperator{\OI}{OI}
\DeclareMathOperator{\pd}{pd}

\DeclareMathOperator{\reg}{reg}
\DeclareMathOperator{\Res}{Res}

\DeclareMathOperator{\si}{si}

\DeclareMathOperator{\Sym}{Sym}

\DeclareMathOperator{\Tor}{Tor}

\DeclareMathOperator{\wi}{wi}

%
%
\DeclareMathOperator{\pnt}{\raise 0.5mm \hbox{\large\bf.}}
\DeclareMathOperator{\lpnt}{\hbox{\large\bf.}}

\newcommand{\la}{\langle}
\newcommand{\ra}{\rangle}

\newcommand{\s}{\; | \;}

\let\phi=\varphi
\newcommand{\eps}{\varepsilon}

\DeclareRobustCommand{\coprod}{\mathop{\text{\fakecoprod}}}
\newcommand{\fakecoprod}{%
  \sbox0{$\prod$}%
  \smash{\raisebox{\dimexpr.9625\depth-\dp0}{\scalebox{1}[-1]{$\prod$}}}%
  \vphantom{$\prod$}%
}
%
%

\newtheorem{thm}{\bf Theorem}[section]
\newtheorem{lem}[thm]{\bf Lemma}
\newtheorem{cor}[thm]{\bf Corollary}
\newtheorem{prop}[thm]{\bf Proposition}
\newtheorem{conj}[thm]{\bf Conjecture}

\theoremstyle{definition}
\newtheorem{defn}[thm]{\bf Definition}

\newtheorem{rem}[thm]{\bf Remark}
\newtheorem{ex}[thm]{\bf Example}
\newtheorem{alg}[thm]{\bf Algorithm}
%
%
\textwidth=16 cm
\textheight=24 cm
\topmargin=-1.5 cm
\oddsidemargin=0.5 cm
\evensidemargin=0.5 cm
\footskip=40 pt

%
%

\title[Equivariant Hilbert Series and Asymptotic Properties]{Rationality of Equivariant Hilbert Series and Asymptotic Properties}

\author[Uwe Nagel]{Uwe Nagel}
\address{Department of Mathematics, University of Kentucky, 715 Patterson Office Tower, Lexington, KY 40506-0027, USA}
\email{uwe.nagel@uky.edu}

\begin{document}

\begin{abstract}
An $\FI$- or an $\OI$-module $\M$ over a corresponding noetherian polynomial algebra $\Pb$ may be thought of as a sequence of compatible modules $\M_n$ over a polynomial ring $\Pb_n$ whose number of variables depends linearly on $n$. In order to study invariants of the modules $\M_n$ in dependence of $n$, an equivariant Hilbert series is introduced if $\M$ is graded. If $\M$ is also finitely generated, it is shown that this series is a rational function. Moreover, if this function is written in reduced form rather precise information about the irreducible factors of the denominator is obtained. This is key for applications. It follows that the Krull dimension of the modules $\M_n$ grows eventually linearly in $n$, whereas the multiplicity of $\M_n$ grows eventually exponentially in $n$. Moreover, for any fixed degree $j$, the vector space dimensions of the degree $j$ components of $\M_n$ grow eventually polynomially in $n$. As a consequence, any graded Betti number of $\M_n$ in a fixed homological degree and a fixed internal degree grows eventually polynomially in $n$. Furthermore, evidence is obtained to support a conjecture that the Castelnuovo-Mumford regularity and the projective dimension of $\M_n$ both grow eventually linearly in $n$. It is also shown that modules $\M$ whose width $n$ components $\M_n$ are eventually Artinian can be characterized by their equivariant Hilbert series. 
Using regular languages and finite automata, an algorithm for computing equivariant Hilbert series is presented.

\end{abstract}

\date{June 22, 2020}

\thanks{The author was partially supported by Simons Foundation grants \#317096 and \#636513. }

\maketitle



\section{Introduction}

$\FI$-modules over a field $K$ have been instrumental in exploring and establishing instances of  representation stability (see, e.g., \cite{CE, CEF, CEFN, PS, SS-14, SS-16}). 
In algebraic statistics, sequences $(\Ib_n)_{n \in \N}$ of symmetric ideals $\Ib_n$ in polynomial rings with increasingly many variables arise naturally (see \cite{AH, Draisma-factor,   Draisma, HS} and also \cite{DK,GS, S-16, S-17, SS-12b} for related results). In \cite{NR2}, $\FI$-modules over an $\FI$-algebra as well as their ordered analogs, $\OI$-modules, have been introduced to capture aspects of both approaches. In this article we mainly consider graded $\FI$- and $\OI$-modules over a noetherian polynomial $\FI$- or $\OI$-algebra $\Pb$. Intuitively, one can think of such an $\FI$-module as a sequence $(\M_n)_{n \in \N}$ of compatible symmetric modules $\M_n$ over  $\Pb_n$, where each $\Pb_n$ is a polynomial ring over a field $K$ whose number of variables grows linearly in $n$. For example,  fix a partition $\lambda$ with $c$ parts $\lambda_1 \ge \cdots \ge \lambda_c \ge 1$ and an integer $k$ with $1 \le k \le c+1$, 
and let  $\Ib_n$  be the ideal of $K[x_{i, j} \; \mid \; 1 \le i \le c, \ 1 \le j \le n]$ that is generated by the $k$-minors of an $(c+1) \times n$ matrix 
\[
\begin{bmatrix}
1 & 1  & \ldots & 1 \\
x_{1,1}^{\lambda_1} & x_{1, 2}^{\lambda_1} & \ldots & x_{1, n}^{\lambda_1} \\
\vdots \\
x_{c,1}^{\lambda_c} & x_{c, 2}^{\lambda_c} & \ldots & x_{c, n}^{\lambda_c}
\end{bmatrix}. 
\]
Each ideal $\Ib_n$ is symmetric, that is, invariant under the action of the symmetric group $\Sym(n)$ acting on column indices, i.e., $\sigma \cdot x_{i, j} = x_{i, \pi (j)}$. These ideals (along with suitable maps) fit together to form an $\FI$-ideal of a polynomial $\FI$-algebra $\Pb$. 

$\OI$-modules are more general than $\FI$-modules. For example, if $\Ib$ is an $\FI$-ideal of $\Pb$, each ideal $\Ib_n$ is invariant under an action of a symmetric group whose size depends on $n$. In contrast, \emph{every}  ideal $J$ in a polynomial ring with finitely many variables generates an $\OI$-ideal $\Ib$ with $J = \Ib_{n_0}$ for some integer $n_0$ (see \Cref{ex:any ideal generated OI-ideal}).  For large $n$, one may consider the ideals  $\Ib_n$ as asymptotic shadows of $J$.  

In \cite{NR2}, it was shown that any finitely generated $\FI$- or $\OI$-module over $\Pb$ is noetherian, i.e., any submodule is again finitely generated. This finiteness result begs the question if quantitative invariants of the modules $\M_n$, considered as $\Pb_n$-modules, are also finite in the sense that one can predict the value for each sufficiently large $n$, provided one knows the values for finitely many modules $\M_n$. For polynomial rings over a field $K$ with finitely many variables, Hilbert pioneered an approach by considering generating functions given by vector space dimensions of graded components. These are called Hilbert series nowadays. Thus, we consider a graded $\FI$- or $\OI$-module $\M$, where each module $\M_n = \oplus_{j \in \Z} [\M_n]_j$ is $\Z$-graded with graded components $[\M_n]_j$. In the spirit of \cite{NR}, we define an equivariant Hilbert series of such a graded $\FI$- or $\OI$-module $\M$ as a formal power series in two variables
\[
H_{\M} (s, t)  = \sum_{n \ge 0, j \in \Z} \dim_K [\M_n]_j s^n t^j.  
\]
We show that it is a rational function of a particular form (see \Cref{thm:shape Hilb f.g. OI-module} for a more precise statement). 

\begin{thm}
     \label{thm-intro:shape Hilb f.g. OI-module}
If $\M$ is a finitely generated graded $\OI$-module over $\Pb$ that is trivial in negative degrees, then its equivariant Hilbert series is of the form
\[
H_{\M} (s, t) = \frac{g(s, t)}{(1-t)^a \cdot \prod_{j =1}^b [(1-t)^{c_j} - s \cdot f_j (t)]},
\]
where $a, b, c_j$ are non-negative integers, \ $g (s, t) \in \Z[s, t]$, and each $f_j (t)$ is a polynomial in $\Z[t]$ satisfying $f_j (1) >  0$ and $f_j (0) = 1$. 
\end{thm}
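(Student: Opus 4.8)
\emph{The plan.} The plan is to reduce the statement, via additivity of the equivariant Hilbert series and a passage to initial modules, to a question about the Hilbert series of a finitely generated monomial $\OI$-module, and then to answer that question by encoding its standard monomials as the language of a finite automaton arising from a greedy matching procedure. The function $H$ is additive on short exact sequences of graded $\OI$-modules, since in each width $n$ and internal degree $j$ one has a short exact sequence of $K$-vector spaces. As $\M$ is finitely generated over the noetherian algebra $\Pb$ (by \cite{NR2}), pick a graded surjection $\Fb = \bigoplus_{i=1}^q \Pb\,\epsilon_i \twoheadrightarrow \M$ with $\Fb$ free of finite rank; its kernel $\M'$ is finitely generated. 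Fixing a monomial order on $\Fb$ and using the Gr\"obner basis theory for $\OI$-modules over $\Pb$ from \cite{NR2}, a finite Gr\"obner basis of $\M'$ restricts in each width $n$ to a Gr\"obner basis of $\M'_n \subseteq \Fb_n$; hence $\dim_K[\M_n]_j = \dim_K[\Fb_n]_j - \dim_K[\ini(\M')_n]_j$ for all $n,j$, so $H_\M = H_{\Fb/\ini(\M')}$ with $\ini(\M')$ a \emph{monomial} $\OI$-submodule of $\Fb$. It therefore suffices to prove the asserted shape for $H_{\Fb/\Nb}$, where $\Nb \subseteq \Fb$ is an arbitrary finitely generated monomial $\OI$-submodule of a finitely generated free $\OI$-module.

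\emph{Free modules.} The case $\Nb = 0$ already shows where the denominator comes from. In width $n$ the free module $\Pb(d)$ is a direct sum of $\binom n d$ copies of $\Pb_n$ indexed by the increasing injections $[d]\hookrightarrow[n]$, and, as in the setup, $\Pb_n$ is a polynomial ring in $cn$ variables of degree one; so $\dim_K[\Pb(d)_n]_j = \binom n d \binom{cn+j-1}{j}$, and summing with $\sum_{n\ge 0}\binom n d u^n = u^d/(1-u)^{d+1}$ at $u = s(1-t)^{-c}$ gives
\[
H_{\Pb(d)}(s,t) \;=\; \frac{s^d\,(1-t)^c}{\bigl((1-t)^c - s\bigr)^{d+1}},
\]
of the required form with $a = 0$, $b = d+1$, all $c_j = c$, all $f_j = 1$. (Any fixed "global" variables of $\Pb_n$, if present, contribute merely an overall factor rational in $t$, absorbed into the numerator and into a power of $1-t$.)

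\emph{The automaton and the denominator.} For general $\Nb$, note that $\dim_K[(\Fb/\Nb)_n]_j$ is the number of monomials of $\Fb_n$ of degree $j$ not lying in $\Nb_n$ --- the standard monomials ---, which we count as follows. Encode a monomial of $\Fb_n$ by the index $i$ of a basis vector $\epsilon_i$, an increasing injection placing its support inside $[n]$, and a word $(\alpha^{(1)},\dots,\alpha^{(n)}) \in (\N^c)^n$ of column exponent vectors; it lies in $\Nb_n$ precisely when one of the finitely many monomial generators of $\Nb$ is carried into it by an increasing injection $[w]\hookrightarrow[n]$ compatible with the support. Reading the word left to right while recording, for each generator and greedily, how far its column-exponent pattern has been matched --- plus a bounded amount of bookkeeping for $i$ and for the position of the support --- uses only finitely many states, and greedy matching can only \emph{advance} through a pattern, so the transitions among the live (standard) states respect a partial order and contain no cycles between distinct states. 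Hence the standard monomials form a regular language and, by the transfer-matrix method,
\[
H_{\Fb/\Nb}(s,t) \;=\; \frac{g_0(s,t)}{\det\!\bigl(I - s\,A(t)\bigr)} \;=\; \frac{g_0(s,t)}{\prod_{\sigma}\bigl(1 - s\,A(t)_{\sigma\sigma}\bigr)},
\]
where $A(t)$ has entries $\sum t^{|\alpha^{(k)}|}$ over the column vectors of $\N^c$ triggering a given transition, the product runs over the live states $\sigma$, and the second equality holds because $A(t)$ is upper triangular on them. For each $\sigma$ the column vectors fixing $\sigma$ are the complement of a finitely generated up-set $U_\sigma \subseteq \N^c$, so by inclusion-exclusion $A(t)_{\sigma\sigma} = P_\sigma(t)/(1-t)^c$ with $P_\sigma \in \Z[t]$; dividing out the largest power of $1-t$ yields $A(t)_{\sigma\sigma} = q_\sigma(t)/(1-t)^{c_\sigma}$ with $q_\sigma \in \Z[t]$, and one checks $q_\sigma(0) = 1$ (the monomial $1$ is the only one of degree $0$) and $q_\sigma(1) > 0$ (the order ideal $\N^c\setminus U_\sigma$ is nonempty and $c_\sigma$ is its exact dimension, so the residue $q_\sigma(1)$ is a positive integer). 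Thus $1 - s\,A(t)_{\sigma\sigma} = \bigl((1-t)^{c_\sigma} - s\,q_\sigma(t)\bigr)/(1-t)^{c_\sigma}$, and multiplying numerator and denominator by enough powers of $1-t$ to clear the remaining denominators in $t$ turns the identity above into
\[
H_\M(s,t) \;=\; \frac{g(s,t)}{(1-t)^a\,\prod_{j=1}^b\bigl[(1-t)^{c_j} - s\,f_j(t)\bigr]},
\]
with $g \in \Z[s,t]$, the $c_j$ the exponents $c_\sigma$, and $f_j = q_\sigma$, so that $f_j(0) = 1$ and $f_j(1) > 0$ as required; the power $(1-t)^a$ absorbs the factors $1-t$ cleared from the entries of $A(t)$ and from the global variables.

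\emph{Main obstacle.} The reductions are routine given the Gr\"obner machinery of \cite{NR2}. The crux is the automaton step: one must organize the state space so that the standard-monomial language is genuinely regular despite the infinite column alphabet $\N^c$ and the several basis vectors and supports, verify that greedy matching makes the transition matrix literally triangular --- which is what forces the denominator to \emph{factor} rather than be an arbitrary polynomial in $s$ and $t$ --- and establish the normalizations $f_j(0) = 1$ and, above all, $f_j(1) > 0$. The last point is exactly what cleanly separates the genuine factors $(1-t)^{c_j} - s\,f_j(t)$ from the $(1-t)^a$ part; it is the feature needed for the applications and for the sharper, reduced-form statement of \Cref{thm:shape Hilb f.g. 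OI-module}.
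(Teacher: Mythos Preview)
Your reduction to monomial submodules via Gr\"obner bases and initial modules matches the paper's route exactly (see \Cref{cor:compare Hilb series} and the proof of \Cref{thm:shape Hilb f.g. OI-module}). The divergence is in the monomial case. The paper splits this into two pieces: Section~\ref{sec:language monomial submod} constructs a regular language encoding monomials and deduces rationality (\Cref{thm:rational hilb monomial submod}), but that alone yields no factored denominator; Section~\ref{sec:denominator} then establishes the specific shape by a lengthy double induction on $d$ and on the invariant $\si(\M)$, resting on a local decomposition (\Cref{prop:decomposition}) that peels off one factor $(1-t)^\gamma - s\,p(t)$ at a time. You instead claim the factorization directly from upper-triangularity of a transfer matrix. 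If it works, this is more conceptual and avoids the decomposition entirely; the paper's route, in turn, yields the finer control recorded in \Cref{rem:refined numerator for c =1} and \Cref{thm:shape Hilb mon submodule c = 1}.

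There is, however, a real gap. The triangularity hinges on an automaton you only sketch. A monomial of $\Fo{d}_n$ carries not just a column-exponent word but also the support map $\pi\colon[d]\to[n]$, and $\OI$-divisibility by a generator $x^v e_\sigma$ requires $\eps\circ\sigma=\pi$; this forces the matching to decompose into $d+1$ independent segments between consecutive support positions, with the support columns matched rigidly rather than greedily. Your phrase ``bookkeeping for the position of the support'' does not make this interaction explicit, and the greedy argument as written does not obviously survive it---one has to check, for instance, that a generator whose segment-$k$ pattern is not yet exhausted when the text reaches $\pi(k+1)$ is correctly declared failed, and that this is still monotone. Moreover, your claim $q_\sigma(0)=1$ (``the monomial $1$ is the only one of degree $0$'') fails whenever a minimal generator has a non-support column with zero exponent vector: take $d=c=1$ and $\Nb=\langle e_\sigma\rangle\subset\Fo{1}$ with $\sigma\colon[1]\to[2]$, $\sigma(1)=1$; then at the state just after the text support one has $A_{\sigma\sigma}(t)=0$. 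This is ultimately harmless---such states contribute the trivial factor $1$---but it shows the normalization argument needs more care. The paper's inductive decomposition sidesteps all of this at the cost of length; your transfer-matrix idea is attractive but would need the segmented automaton spelled out to be a proof.
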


As a consequence, an analogous result is also true for finitely generated graded $\FI$-modules (see \Cref{cor:shape Hilb f.g. FI-module}). Notice that the assumption about triviality in negative degrees is harmless as it can be achieved by shifting the grading of $\M$. Such a degree shift changes the equivariant Hilbert series by factor equal to a power of $t$. If $\Pb$ is the smallest noetherian polynomial $\OI$- or $\FI$-algebra that is not equal to $K$ in every width,  then 
\Cref{thm-intro:shape Hilb f.g. OI-module} can be considerably strengthened (see \Cref{cor: hilb for c=1}). 

In the special case, where $\M$ is a quotient of $\Pb$, the above result was shown (for the most part) in \cite{NR}. Later, rationality of the equivariant Hilbert series in that case was proved by a different method in \cite{KLS}. Here we combine both approaches and introduce a novel technique. In fact, producing a suitable formal language, we show rather quickly rationality of the Hilbert series. Establishing the given factorization of its denominator as a product of irreducible polynomials requires considerably more work. The main new tool is a (local) decomposition result for monomial $\OI$-modules (see \Cref{prop:decomposition}). It is not functorial, but powerful enough to enable an induction on quotients of finitely generated free $\OI$-modules. 

The above description of the denominator in \Cref{thm-intro:shape Hilb f.g. OI-module} is crucial for deriving various consequences. The Krull dimension of $\M_n$ is given by a linear function in $n$ for $n \gg 0$, whereas the multiplicity of $\M_n$ eventually grows exponentially in $n$. In particular, the limits 
 \[
\lim_{n \to \infty} \frac{\dim \M_n}{n} \quad \text{ and } \quad  \lim_{n \to \infty} \sqrt[n]{\deg \M_n} > 0 
 \]
 exist and are equal to  integers (see \Cref{thm:growth invariants}). This suggest to define the first limit as the dimension of $\M$ and the second limit as the multiplicity of $\M$. 
While  $\lim_{n \to \infty} \frac{\dim \M_n}{n}$ is bounded above by $\lim_{n \to \infty} \frac{\dim \Pb_n}{n}$, there is no universal upper bound for $\lim_{n \to \infty} \sqrt[n]{\deg \M_n}$ (see \Cref{exa:asymptotic degree}). 
 Note that the result about Krull dimensions is also true for modules that are not necessarily graded (see \Cref{thm:growth of dim}).

In contrast to the exponential growth of multiplicity, if one fixes a degree $j$, then the vector space dimension of $[\M_n]_j$ grows only 
polynomially in $n$ (see \Cref{thm:polynomiality in fixed degree}  for a more precise statement). 

\begin{thm}
          \label{thm-intro:polynomial in fixed degree}
If $\M$ is a finitely generated graded $\OI$-module over $\Pb$, then, for any fixed integer $j$, there is a polynomial $p(t) \in \Q[t]$ such that $\dim_K [\M_n]_j = p(n)$ whenever $n \gg 0$. 
\end{thm}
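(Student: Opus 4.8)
The plan is to extract the statement about a fixed internal degree $j$ directly from the shape of the equivariant Hilbert series given in \Cref{thm-intro:shape Hilb f.g. OI-module}. Write
\[
H_{\M}(s,t) = \frac{g(s,t)}{(1-t)^a \cdot \prod_{j=1}^b [(1-t)^{c_j} - s \cdot f_j(t)]}.
\]
For a fixed internal degree $j_0$, the quantity we care about is the coefficient of $t^{j_0}$ in $H_{\M}(s,t)$, viewed as a power series in $s$ whose coefficients lie in $K[[t]]$ (or $\Q[[t]]$); more precisely, $\dim_K[\M_n]_{j_0}$ is the coefficient of $s^n t^{j_0}$. So the first step is to regard $H_{\M}(s,t) \in \Q[[t]][[s]]$ and compute, for each $n$, the $t^{j_0}$-coefficient of $[s^n]H_{\M}(s,t) \in \Q[[t]]$. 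The key observation is that $f_j(0)=1$, so each factor $(1-t)^{c_j} - s f_j(t)$ has constant term (in $t$) equal to $1 - s$, hence is invertible in $\Q[[t]][[s]]$; more importantly, modulo $t^{j_0+1}$ the whole denominator becomes a polynomial in $s$ (with coefficients in $\Q[t]/(t^{j_0+1})$) whose ``constant term'' is a unit, so the series expansion of $H_{\M}$ modulo $t^{j_0+1}$ is governed by a linear recurrence in $s$ with constant (in $s$) coefficients.

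Concretely, the second step is to reduce everything modulo $t^{j_0+1}$: set $R = \Q[t]/(t^{j_0+1})$, and observe that $\overline{H}_{\M}(s,t) := H_{\M}(s,t) \bmod t^{j_0+1}$ is a rational function in $s$ over $R$ of the form $\overline{g}(s,t)/D(s,t)$ where $D(s,t) \in R[s]$ has invertible constant term $D(0,t) = (1-t)^a \equiv 1 \pmod{t}$ is a unit in $R$ up to the unit $(1-t)^a$ — in any case $D(0,t)$ is a unit in $R$ since $1-t$ is a unit in $\Q[t]/(t^{j_0+1})$. Therefore the coefficients $[s^n]\overline{H}_{\M}(s,t) \in R$ satisfy an eventually-valid linear recurrence with constant coefficients in $R$, namely the one read off from $D(s,t)$. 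The third step is then purely formal: a sequence in the finite-dimensional $\Q$-vector space $R$ satisfying a linear recurrence over $R$ with constant coefficients, whose characteristic polynomial's leading behavior we control, has entries given eventually by a quasi-polynomial in $n$ with values in $R$; taking the $t^{j_0}$-component of that $R$-valued quasi-polynomial gives a $\Q$-valued quasi-polynomial $q(n)$ with $\dim_K[\M_n]_{j_0} = q(n)$ for $n \gg 0$.

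The remaining — and genuinely substantive — point is to upgrade ``quasi-polynomial'' to ``polynomial,'' i.e. to eliminate periodicity. This is where the precise form of the denominator in \Cref{thm-intro:shape Hilb f.g. OI-module} is essential rather than just rationality. The point is that each irreducible factor contributing to $\overline{H}_{\M}$ modulo $t^{j_0+1}$ reduces, modulo $t$, to a power of $(1-s)$: indeed $(1-t)^{c_j} - s f_j(t) \equiv 1 - s \pmod t$ and $(1-t)^a \equiv 1 \pmod t$. So the reciprocal roots of the denominator $D(s,t) \in R[s]$ are all congruent to $1$ modulo the maximal ideal $(t)$ of the local ring $R$; lifting via Hensel/continuity, the only root of $D$ that can occur with nonzero ``size'' is a deformation of $s=1$, and no primitive root of unity other than $1$ appears. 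Hence the quasi-polynomial has period $1$, i.e. is an honest polynomial $p(t) \in \Q[t]$. I would carry this out by induction on $b$ (the number of nontrivial denominator factors), clearing one factor $[(1-t)^{c_j} - s f_j(t)]$ at a time via partial fractions over the field $\Q((t))$ or, cleaner, by directly arguing that modulo $t^{j_0+1}$ the denominator divides a power of $(1-s)$ times a unit in $R[s]$ — since $(1-t)^{c_j} - s f_j(t) = (1-s) + (\text{terms divisible by } t)$ and one can factor out $(1-s)$ after passing to the completion — and then invoking the standard fact that $[s^n]\,u(s,t)/(1-s)^m$ is polynomial in $n$ for a polynomial $u$. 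The main obstacle is exactly this periodicity-elimination: one must be careful that reducing mod $t^{j_0+1}$ does not create spurious roots of unity among the reciprocal roots, and the condition $f_j(0) = 1$ (equivalently, the constant term $1-s$) is what rules this out. Finally, I would note that the case $j < 0$ is trivial since $\M$ can be assumed trivial in negative degrees after a harmless degree shift, which only multiplies $H_{\M}$ by a power of $t$ and shifts the index $j_0$.
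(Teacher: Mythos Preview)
Your approach is correct and takes a genuinely different route from the paper's. The paper proves this as \Cref{thm:polynomiality in fixed degree} by expanding each denominator factor as a geometric series,
\[
\frac{(1-t)^e}{(1-t)^e - s f(t)} = \sum_{n\ge 0} \frac{f(t)^n}{(1-t)^{en}}\, s^n,
\]
and then combining two explicit ingredients: a multinomial computation (\Cref{lem:powers of pol}) showing that the coefficient of $t^j$ in $f(t)^n$ is eventually polynomial in $n$ precisely because $f(0)\in\{0,1\}$ makes the constant term $a_0$ satisfy $a_0^m = a_0$ for all $m\ge 1$, and the binomial expansion of $(1-t)^{-en}$, whose $t^k$-coefficient $\binom{k+en-1}{k}$ is polynomial in $n$. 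Products of such factors and multiplication by $g(s,t)/(1-t)^a$ then preserve the property.

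Your route instead reduces modulo $t^{j_0+1}$ and works over the Artinian local ring $R=\Q[t]/(t^{j_0+1})$, where each factor becomes $(1-t)^{c_j}(1-\alpha_j s)$ with $\alpha_j = f_j(t)/(1-t)^{c_j}$ and $\alpha_j-1\in tR$ nilpotent; from nilpotence one gets the divisibility $(1-\alpha_j s)\mid (1-s)^{j_0+1}$ in $R[s]$ (expand $(1-s)^{j_0+1}=\bigl((1-\alpha_j s)+(\alpha_j-1)s\bigr)^{j_0+1}$ and note the top term vanishes), so the full denominator divides a power of $(1-s)$, and the series becomes $Q(s)/(1-s)^N$ with $Q\in R[s]$, whose $s^n$-coefficient is visibly polynomial in $n$ with values in $R$. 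Projecting onto the $t^{j_0}$-coordinate gives the result. Both arguments use $f_j(0)=1$ at the decisive moment, but yours packages it as a nilpotence statement rather than a coefficient-by-coefficient calculation, which makes the absence of periodicity (no roots of unity other than $1$) structurally transparent; the paper's explicit expansion, on the other hand, is more elementary and gives a somewhat more direct grip on the shape of the polynomial $p(n)$.
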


This result extends \cite[Corollary 7.1.7]{SS-14}, which considers the case of an $\OI$-module over a field.

If $M$ is a finitely generated graded module over a polynomial ring in finitely many variables, then 
$M$ is Artinian if and only if its Hilbert series is a polynomial. There is an analogous result for 
$\OI$-modules. In fact, modules $\M$ with the property that $\M_n$ is Artinian for $n \gg 0$ can be 
characterized by their 
equivariant Hilbert series (see \Cref{thm:hilb artinian}). 

Our results have also consequences for graded Betti numbers. For any fixed $n$, the polynomial ring $\Pb_n$ is noetherian, and thus $\M_n$ has a finite graded minimal free resolution over $\Pb_n$. The generators of the free modules appearing in the resolution determine the graded Betti numbers 
\[
\beta_{i, j}^{\Pb_n} (\M_n) = \dim_K [\Tor_i^{\Pb_n} (\M_n, K)]_j. 
\]
The set of Betti numbers can be conveniently displayed in the Betti table of $\M_n$ whose $(i, j)$-entry is $\beta_{i, i+j}^{\Pb_n} (\M_n)$, that is,  column $j$ corresponds to homological degree $j$. Its entries in row $i$ refer to minimal syzygies of degree $i+j$. Thus, the number of columns in the Betti table of $\M_n$ is equal to the projective dimension $\pd_{\Pb_n} (\M_n)$ of $\M_n$ and the number of rows determines the Castelnuovo-Mumford regularity  $\reg (\M_n)$. 
For fixed $n$, Betti tables of finitely generated graded $\Pb_n$-modules are classified up to rational multiples in \cite{ES}. Here we are interested in the sequence of Betti tables for the modules $\M_n$ as $n$ varies. Given $\M$ and fixing any integer $j \ge 0$, there are only finitely many rows in column $j$ in which the entries of the Betti table of any module $\M_n$ can possibly be non-zero (see \cite[Theorem 7.7]{NR2}). \Cref{thm-intro:polynomial in fixed degree} implies that the value of any  $(i, j)$-entry in the Betti table of $\M_n$ is eventually given by a polynomial, that is, for any integers $i, j$, there is a polynomial $p(t) \in \Q[t]$ such that $\beta_{i, j}^{\Pb_n} (\M_n) = p (n)$ if $n \gg 0$ (see \Cref{thm:Betti numbers}). Our results also lead us to conjecture that the heights and the widths of the Betti tables of the modules $\M_n$, that is, $\pd_{\Pb_n} (\M_n)$ and $\reg (\M_n)$,  grow eventually linearly in $n$ (see \Cref{sec:concluding remarks} for further discussion). 
\smallskip

We now discuss the organization of this paper. In \Cref{sec:preliminaries} we fix notation and review needed results. Then we show rationality of the Hilbert series of a monomial submodule $\M$ of a free $\OI$-module with one generator in \Cref{sec:language monomial submod}. To this end we construct a formal language whose words are in bijection to the monomials of $\M$ and show that the language is regular. Maintaining this set-up, \Cref{sec:denominator}  is devoted to  establishing a central result. We obtain rather precise information on the irreducible factors of the denominator polynomial when the rational function of an equivariant Hilbert series is written in reduced form. A key step is the mentioned (local) decomposition result for monomial $\OI$-modules (see \Cref{prop:decomposition}). Using the theory of Gr\"obner bases, in \Cref{sec:general case}  this is extended to any finitely generated $\OI$- and $\FI$-module. In particular, we establish Theorems \ref{thm-intro:shape Hilb f.g. OI-module} and \ref{thm-intro:polynomial in fixed degree} there. 

In \Cref{sec:artinian mod}, we consider the special case of an Artinian $\OI$-module $\M$. By definition, this means that $\M_n$ is an Artinian $\Pb_n$-module if $n \gg 0$. The equivariant Hilbert series of an Artinian $\OI$-module is described in \Cref{thm:hilb artinian}). Then we 
establish 
the mentioned polynomial change of any graded Betti number $\beta_{i, j}^{\Pb_n} (\M_n)$  as $n$ varies and is sufficiently large. 

Finally, in \Cref{sec:concluding remarks} we first observe that all the above results are true in virtually greater generality, where $K$ is a noetherian standard graded algebra over a field $k$ and one uses vector space dimensions over $k$, that is, one considers the equivariant Hilbert series $\sum_{n \ge 0, j \in \Z} \dim_k  [\M_n]_j s^n t^j$. This is achieved by reducing to the case discussed above (see \Cref{thm:base ring}). 
Then we 
present a finite algorithm for computing the equivariant Hilbert series of a graded $\OI$-module. This algorithm utilizes the regularity of the language considered in  \Cref{sec:language monomial submod} in one of its steps. We conclude with offering and discussing conjectures about the 
projective dimension and the Castelnuovo-Mumford regularity of the modules $\M_n$ as $n$ varies (see Conjectures \ref{conj:reg growth}  and \ref{conj: pd growth}). 


\section{$\OI$-modules and Hilbert series}
\label{sec:preliminaries}

We introduce  notation, discuss examples and recall results that will be used throughout this paper. 

We consider two combinatorial categories. Denote by $\FI$ the category whose objects are finite sets and whose morphisms are injections (see \cite{CEF} for more details).
The category $\FIO$ is the subcategory of $\FI$ whose objects are totally ordered finite sets and whose morphisms are order-preserving injective maps (see \cite{SS-14}). It will be enough to work with the skeletons of these categories. 

For an integer $n \ge 0$, we set $[n] = \{1,2,\ldots,n\}$. Thus, $[0] = \emptyset$. We denote by $\N$ and $\N_0$ the set of positive integers and non-negative integers, respectively.
The category $\FIO$ is equivalent to its skeleton,  the category with objects $[n]$ for $n \in \N_0$ and morphisms being order-preserving injective maps $\eps\colon [m] \to [n]$. In particular, this implies $\eps (m) \ge m$. Similarly, the skeleton of $\FI$ is the subcategory with objects $[n]$ for $n \in \N_0$ and morphisms being injective maps $\eps\colon [m] \to [n]$.
In order to define a functor $\FIO \to C$ or $\FI \to C$ it is enough to define it on the corresponding skeleton. We will use this convention throughout the paper.

Let $K$ be a commutative ring with unity and 
denote by $K$-$\Alg$ the category of commutative, associative, unital $K$-algebras whose 
morphisms are $K$-algebra homomorphisms that map the identity of the domain onto the identity of the codomain. An \emph{$\FIO$-algebra over $K$}
 is a covariant functor $\Ab$ from $\FIO$ to the category $K$-$\Alg$ with $\Ab({\emptyset}) = K$.
Similarly, 
an \emph{$\FI$-algebra over $K$} is  a functor $\Ab$ from the category $\FI$ to $K$-$\Alg$ with $\Ab({\emptyset}) = K$ (see \cite[Definition 2.4]{NR2}). 
Since $\FIO$ is a subcategory of $\FI$, any $\FI$-algebra may also be considered as an $\FIO$-algebra. We often will use the same symbol to denote both of these algebras.

For an interval $[n]$, we write $\Ab_n$ for the $K$-algebra  $\Ab ([n])$ and refer to its elements as the elements of \emph{width $n$} in $\Ab$. Given a morphism $\eps\colon [m] \to [n]$, we often write $\eps_*\colon \Ab_m \to \Ab_n$ for the morphism $\Ab(\eps)$.

An $\FIO$-algebra $\Ab$ (or $\FI$-algebra over $K$, respectively)  is said to 
\emph{finitely generated}, if there exists
a finite subset $G \subset \coprod_{n\geq 0} \Ab_n$
which is not contained in any proper subalgebra of $\Ab$.
As in the classical case, finite generation can be characterized using polynomial $\OI$- or $\FI$-algebras as introduced in \cite[Definition 2.17]{NR2}. 

\begin{defn}[\cite{NR2}]
       \label{def:polynomial-FOI-algebra}
Let $d \ge 0$ be an integer.
\begin{enumerate}
\item
Define a functor $\XO{d} = \XO{d}_K\colon \FIO \to K$-$\Alg$ by letting
\[
\XO{d}_n = K\bigl[x_{\pi} \; : \; \pi \in \Hom_{\FIO} ([d], [n])\bigr]
\]
be the polynomial ring over $K$ with variables $x_{\pi}$,
and, for $\eps \in \Hom_{\FIO} ([m], [n])$, by defining
 \[
\XO{d} (\eps)\colon \XO{d}_m \to \XO{d}_n
 \]
as the $K$-algebra homomorphism given by mapping $x_{\pi}$ onto $x_{\eps \circ \pi}$.

A \emph{polynomial $\FIO$-algebra over $K$} is an $\FIO$-algebra that is isomorphic to a tensor product
 $\Xb = \bigotimes_{\lambda \in \Lambda} \XO{d_{\lambda}}$, where each $\Xb_n$ is a tensor product of rings
 $\XO{d_{\lambda}}_n$ over $K$.
\item
Ignoring orders, we similarly define an $\FI$-algebra $\XI{d}$ over $K$ and a \emph{polynomial $\FI$-algebra over $K$}.
\end{enumerate}
\end{defn}

According to \cite[Proposition 2.19]{NR2}, an $\OI$-algebra is finitely generated if and only if 
 there is a surjective natural transformation $\XO{d_1} \otimes_K \cdots \otimes_K \XO{d_k} \to \Ab$ for some integers $d_1,\ldots,d_k \ge 0$. 
 
Notice that, for every integer $n \ge 0$ the algebras $\XI{1}_n$ and $\XO{1}_n$ are isomorphic to a polynomial ring in $n$ variables over $K$, whereas $\XI{0}_n = \XO{0}_n = K$. Thus, we refer to $\XI{0}_n$ and $\XO{0}_n$ as algebras with constant coefficients. 

We will mostly consider algebras $(\XO{1})^{\otimes c}$ or $(\XI{1})^{\otimes c}$, where $c$ is an integer $c > 0$. For these, we have the following interpretations that we use throughout this paper. 

\begin{rem}
\label{rem:poly-alg}
Fix an integer $c \ge 1$. 
\begin{enumerate}

\item
Identifying $\eps \in \Hom_{\FIO} ([d], [n[)$ with its image $s_1 < s_2 < \cdots < s_d$ in $[n]$, we get for $\Pb = (\XO{1})^{\otimes c}$, 
 \[
 \Pb_n = K[x_{i, j}  \; \mid \; i \in [n], \ j \in [c] ]  
 \]
and, for each $\eps \in \Hom_{\OI} ([m], [n])$, a $K$-algebra homomorphism 
\[
\eps_* = \Pb (\eps): \Pb_m \to \Pb_n  \; \text{ defined by } \eps_* (x_{i, j}) = x_{i, \eps (j)}. 
\]

\item
Ignoring orders, one has analogous identifications for $(\XI{1})^{\otimes c}$. 
\end{enumerate}
\end{rem}

Assigning every variable in $(\XO{1})^{\otimes c}_n$ or $ (\XI{1})^{\otimes c}_n$ degree one, they become  polynomial rings with standard ($\Z$)-grading and each homomorphism $\eps$ is graded of degree zero. Thus, $(\XO{1})^{\otimes c}$ and $(\XI{1})^{\otimes c}$ are examples of a graded $\OI$- or $\FI$-algebra as defined in \cite[Remark 2.20]{NR2}. We will always use this standard grading. 
\smallskip

Now we consider modules. Denote by $K$-$\Mod$ the category of $K$-modules, and let 
 $\Ab$ be an $\FIO$-algebra over $K$.   Following \cite[Definition 3.1]{NR2}, the objects of the category of \emph{$\FIO$-modules over $\Ab$}, denoted $\FIO$-$\Mod (\Ab)$, are covariant functors $\M \colon \FIO \to K$-$\Mod$ such that,
\begin{itemize}
\item[(i)] for any integer $n \ge 0$, the $K$-module $\M_n = \M([n])$ is an $\Ab_n$-module, \ and

\item[(ii)]
for any morphism  $\eps\colon [m] \to [n]$ and any $a \in \Ab_m$, the following diagram is commutative
\begin{equation*}
\xy\xymatrixrowsep{1.8pc}\xymatrixcolsep{1.8pc}
\xymatrix{
\M_m \ar @{->}[r]^-{\M (\eps)} \ar @{->}[d]_-{\cdot a} & \M_n \ar @{->}[d]^-{\cdot \Ab(\eps )(a)} \\
\M_m \ar @{->}[r]^-{\M (\eps)} & \M_n.
}
\endxy
\end{equation*}
Here the vertical maps are given by multiplication by the indicated elements.
\end{itemize}
The morphisms of $\FIO$-$\Mod (\Ab)$ are natural transformations $F \colon \M \to \Nb$ such that, for every integer $n \ge 0$, the map $\M_n \stackrel{F(n)}{\longrightarrow} \Nb_n$ is an $\Ab_n$-module homomorphism.

Ignoring orders, we define similarly the category $\FI$-$\Mod (\Ab)$ of \emph{$\FI$-modules over an $\FI$-algebra $\Ab$}.
Its objects are functors from $\FI$ to $K$-$\Mod$ and its morphisms are natural transformations satisfying conditions analogous to those above.

In the case of constant coefficients, these concepts specialize to previously studied objects. Indeed,  if $\Ab = \XI{0}$ is the ``constant'' $\FI$-algebra over $K$ the category $\FI$-$\Mod (\Ab)$ is exactly the category of $\FI$-modules over $K$ as, for example, studied in \cite{CE, CEF, CEFN}. 
If $\Ab = \XO{0}$ is the ``constant'' $\OI$-algebra then $\FIO$-$\Mod (\Ab)$ is the category of 
$\OI$-modules as introduced in \cite{SS-14}.

The categories $\FIO$-$\Mod (\Ab)$ and $\FI$-$\Mod (\Ab)$ inherit the structure of an abelian category from $K$-$\Mod$, with all concepts such as subobject, quotient object, kernel, cokernel, injection, and surjection being defined ``pointwise'' from the corresponding concepts in $K$-$\Mod$ (see \cite[A.3.3]{W}). 

If $\M$ is any $\FIO$-module we often write $q \in \M$ instead  of $q \in \coprod_{n \ge 0} \M_n$   and refer to $q$ as an \emph{element} of $\M$. Similarly a \emph{subset} of $\M$ is defined as a subset of $\coprod_{n \ge 0} \M_n$. An element $q$ of $\M$ has \emph{width} $n$ if $q \in \M_n$. 

For a subset $E$ of any $\FIO$-module $\M$, we denote by
$\la E \ra_{\M}$ or simply $\la E \ra$ the smallest $\FIO$-submodule of $\M$ that contains $E$. It is called the
$\FIO$-submodule \emph{generated by $E$}. 

An $\FIO$-module $\M$ over an $\OI$-algebra $\Ab$ is said to be \emph{noetherian} if every $\FIO$-submodule of $\M$ is finitely generated.
The algebra $\Ab$ is \emph{noetherian} if it is a noetherian $\FIO$-module over itself.
Analogously, we define a \emph{noetherian $\FI$-module} and a \emph{noetherian $\FI$-algebra}.
In contrast to the classical situation, not all polynomial algebras are noetherian. In fact, if $K$ is a noetherian ring, then 
an algebra $\XO{d}$ or $\XI{d}$ is \emph{noetherian} if and only $d \in \{0, 1\}$ (see \cite[Proposition 4.8]{NR2}). Furthermore, for any integer $c \ge 1$, the algebras $(\XO{1})^{\otimes c}$ and $(\XI{1})^{\otimes c}$ are noetherian, provided $K$ is noetherian (see \cite[Corollaries 6.18 and 6.19]{NR2}). This finiteness property extends to modules. If $K$ is noetherian, then any finitely generated $\OI$-module over $(\XO{1})^{\otimes c}$ is noetherian. Similarly, any finitely generated $\FI$-module over $(\XI{1})^{\otimes c}$ is noetherian (see \cite[Theorem 6.17]{NR2}). 

Finite generation of an $\OI$-module can be characterized by using free modules as introduced in \cite[Definition 3.16]{NR2}. 

\begin{defn}[\cite{NR2}] 
\label{def:free}
\
\begin{enumerate}
\item
For an $\FIO$-algebra $\Ab$ over $K$ and an integer $d \ge 0$, let $\Fo{d} = \Fo{d}_{\Ab}$ be the $\FIO$-module over $\Ab$ defined by
\[
\Fo{d}_m = \oplus_{\pi \in  \Hom_{\FIO} ([d], [m])} \Ab_m e_{\pi} \cong (\Ab_m )^{\binom{m}{d}},
\]
where $m$ is any non-negative integer, and
\[
\Fo{d}(\eps)\colon \Fo{d}_m \to \Fo{d}_n, \ a e_{\pi} \mapsto \eps_*(a) e_{\eps \circ \pi},
\]
where $a \in \Ab_m$ and $\eps \in  \Hom_{\FIO} ([m], [n])$.

A \emph{free $\FIO$-module} over $\Ab$ is an $\FIO$-module that is isomorphic to
$\bigoplus_{\lambda \in \Lambda} \Fo{d_{\lambda}}$.
\item
Ignoring orders, we similarly define an $\FI$-module $\Fi{d}$ over an $\FI$-algebra $\Ab$ and a \emph{free $\FI$-module} over $\Ab$.
\end{enumerate}
\end{defn} 

Notice that $\Fo{d}$ and $\Fi{d}$ are generated by one element in width $d$, namely $e_{\id_{[d]}}$.  
An $\OI$-module $\M$ over $\Ab$ is finitely generated if and only if there is a surjective natural transformation 
$\bigoplus_{i = 1}^k \Fo{d_{i}} \to \M$ 
for some integers $d_i \ge 0$.
An analogous statement is true for any $\FI$-module $M$ over an $\FI$-algebra $\Ab$ (see \cite[Proposition 3.18]{NR2}). 

A \emph{$\Z$-graded $\FIO$-module} is an $\FIO$-module $\M$ over a graded $\FIO$-algebra $\Ab$  such that every $\M_n$ is a graded $\Ab_n$-module and every map $\M (\eps)\colon \M_n \to \M_p$ is a graded homomorphism of degree zero.
We will refer to it simply as a graded $\FIO$-module. Similarly, we define a graded $\FI$-module. 
If $\Ab$ is a graded $\OI$-algebra, every free $\OI$-module $\Fo{d}$ becomes a graded module over $\Ab$ by assigning its generator  $e_{\id_{[d]}}$ any degree. We will always set its degree to zero. It follows that, for each $\pi \in  \Hom_{\FIO} ([d], [n])$,  the element $e_{\pi} \in \Fo{d}_n$ has degree zero as well. 

Any finitely generated graded module over a noetherian polynomial ring gives rise to a graded $\OI$-module. 

\begin{ex}
     \label{ex:any ideal generated OI-ideal} 
(i) Consider any homogeneous ideal $J$ of a polynomial ring $P = K[y_1,\ldots,y_c]$ in $c$ variables. Let $\{f_1,\ldots,f_r\}$ be a generating set of $J$ consisting of homogeneous polynomials. Define $\Pb = (\XO{1})^{\otimes c}$ and polynomials $q_1,\ldots,q_r \in \Pb_1 = K[x_{1,1},\ldots,x_{c, 1}]$ by substituting $x_{i, 1}$ for $y_i$, that is, $q_j = f_j(x_{1,1},\ldots,x_{c, 1})$. Then the set $E = \{q_1,\ldots,q_r\}$ generates an $\OI$-ideal  $\Ib$ of $\Pb$ with $P/J \cong \Pb_1/\Ib_1$. Note that $E$ generates an $\FI$-ideal  $\Ib$ of $\Pb = (\XI{1})^{\otimes c}$ if and only if $J$ is invariant under the action of a symmetric group with $c$ letters permuting the indices of the $y$-variables. 

(ii) If $N$ is any finitely generated graded $P$-module, then an analogous construction gives an $\OI$-module $\M$ over $\Pb$  with $\M_1 = N$. Thus, one may think of invariants of the modules $\M_n$ with $n \gg 0$ as asymptotic versions of the corresponding invariants of $N$. 
\end{ex}

Note that the above construction can easily be varied. For example, in the setting of (i) above,  choosing positive integers $n_0, c'$ with $n_0 c' \ge c$ one gets analogously an $\OI$-ideal $\Ib_n$ of $\Pb = (\XO{1})^{\otimes c'}$ such that $\Pb_{n_0}/\Ib_{n_0}$ is isomorphic to a polynomial ring over $P/J$. 

The next observation shows how one may consider a polynomial $\OI$- or $\FI$-algebra over a standard
graded $k$-algebra essentially as a polynomial algebra over $k$, where $k$ is any field. 

\begin{ex}
    \label{exa:reduction to field}
Let $K$ be a noetherian standard graded algebra over a field $k$, that is, $K$ is isomorphic to 
$k[y_1,\ldots,y_r]/J$, where $J$ is an ideal generated by homogeneous polynomials $f_1,\ldots,f_s \in k[y_1,\ldots,y_r]$ with positive degrees and each $y_i$ has degree one. 

(i)
Let $\Ab =  (\XO{1}_K)^{\otimes c}$ be a polynomial $\OI$-algebra over $K$. We will see that $\Ab$ is an $\OI$-module over $\Pb = (\XO{1}_k)^{\otimes (c+r)}$. Indeed, let $\Ib$ be the ideal of $\Pb$ that is generated by the linear polynomials $x_{i, 1} - x_{i, 2} \in \Pb_2$ with $c < i \le c+r$ and $f_i (x_{c+1, 1},\ldots,x_{c+r, 1}) \in \Pb_1$ with $1 \le i \le s$. Then $\Ab_0 = K$ and $(\Pb/\Ib)_n = k$. However, for any $n \in \N$, there is a graded isomorphism of $k$-algebras 
$\Ab_n \cong (\Pb/\Ib)_n$. 

(ii) Similarly, one gets for $\FI$-algebras  a graded isomorphism $( (\XI{1}_K)^{\otimes c})_n \cong 
( (\XI{1}_k)^{\otimes (c+r}/\Ib)_n$ for every $n \in \N$, where $\Ib$ is the $\FI$-ideal generated by the same polynomials as in (i). 
\end{ex}

\smallskip 

Let $P$ be a polynomial ring in finitely many variables over any field $K$. The \emph{Hilbert function} of a finitely generated graded $P$-module $M$ in degree $j$ is  $h_{M}(j)=\dim_K [M]_j$, where we denote by $[M]_j$ the degree $j$ component of $M$.  It is well-known that,  for large $j$, this is actually a polynomial function in $j$. Equivalently, the Hilbert series of $M$ is a rational function. Recall that the \emph{Hilbert series} of $M$ is the formal power series
\[
H_M (t) = \sum_{j \in \Z} h_M (j) \cdot t^j.
\]
By Hilbert's theorem (see, e.g., \cite[Corollary 4.1.8]{BH}), if $M$ is not zero then this series can 
be uniquely written in the form
\[
 H_M(t)=\frac{g_M(t)}{(1-t)^d}\quad\text{with }\ g_M(t)\in\Z[t,t^{-1}]\ \text{ and }\ g_M(1) \ne 0.
\]
The number $d$ is the \emph{(Krull) dimension} of $M$ and $\deg M = g_M (1) > 0$ is the  \emph{degree} or \emph{multiplicity} of $M$. 

Consider now a polynomial $\FI$- or $\OI$-algebra $\Pb$ over a field $K$ and 
 a finitely generated graded $\FI$- or $\OI$-module $\M$ over $\Pb$.   Every $\Pb_n$-module $\M_n$ has a rational Hilbert series. Combining these we define the \emph{equivariant Hilbert series} of $\M$ as a formal power series in two variables
\[
H_{\M} (s, t) = \sum_{n \ge 0} H_{\M_n} (t) s^n  = \sum_{n \ge 0, j \in \Z} \dim_K [\M_n]_j s^n t^j.  
\]

Let us determine this series in the special case of a free $\OI$-module over a noetherian polynomial  algebra. 

\begin{prop}
    \label{prop:hilb free OI-mod}
Consider a free $\OI$-module $\Fb = \Fo{d}$ over $\Pb = (\XO{1})^{\otimes c}$. Its equivariant Hilbert series is 
\[
H_{\Fb} (s, t) =  \frac{s^d (1-t)^c}{[(1-t)^c - s]^{d+1}}. 
\]
\end{prop}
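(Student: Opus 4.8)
The plan is to compute the equivariant Hilbert series by summing the ordinary Hilbert series of the free $\Pb_n$-module $\Fb_n = \Fo{d}_n$ over all $n$. First I would recall from \Cref{def:free} that $\Fb_n = \bigoplus_{\pi \in \Hom_{\FIO}([d],[n])} \Pb_n e_{\pi}$, where each generator $e_{\pi}$ has degree zero, so $\Fb_n$ is a free $\Pb_n$-module of rank $\binom{n}{d}$. Since $\Pb_n = (\XO{1})^{\otimes c}_n = K[x_{i,j} \mid i\in[n],\ j\in[c]]$ is a polynomial ring in $cn$ variables over $K$ with the standard grading (see \Cref{rem:poly-alg}), its Hilbert series is $H_{\Pb_n}(t) = (1-t)^{-cn}$, and hence
\[
H_{\Fb_n}(t) = \binom{n}{d}\, (1-t)^{-cn}.
\]

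Next I would plug this into the definition of the equivariant Hilbert series:
\[
H_{\Fb}(s,t) = \sum_{n \ge 0} H_{\Fb_n}(t)\, s^n = \sum_{n \ge d} \binom{n}{d} \frac{s^n}{(1-t)^{cn}} = \sum_{n \ge d} \binom{n}{d}\left(\frac{s}{(1-t)^c}\right)^{\! n}.
\]
Here one sets $u = s/(1-t)^c$ and uses the standard generating-function identity $\sum_{n \ge d} \binom{n}{d} u^n = u^d / (1-u)^{d+1}$ (which follows, e.g., by $d$-fold differentiation of the geometric series, or by the negative-binomial expansion of $(1-u)^{-d-1}$). Substituting $u = s/(1-t)^c$ back in and clearing denominators gives
\[
H_{\Fb}(s,t) = \frac{\left(\dfrac{s}{(1-t)^c}\right)^{\! d}}{\left(1 - \dfrac{s}{(1-t)^c}\right)^{\! d+1}}
= \frac{s^d (1-t)^c}{\bigl[(1-t)^c - s\bigr]^{d+1}},
\]
which is the claimed formula.

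The only subtle point — the "main obstacle," though it is minor here — is that $u = s/(1-t)^c$ is not a genuine power series in the two variables $s,t$, so one should either argue formally in the ring $K[[s]]((t))$ (or in $\Z[(1-t)^{-1}][[s]]$), where the geometric-series manipulation is legitimate term-by-term in powers of $s$, or simply observe that the identity $\sum_{n\ge d}\binom{n}{d} u^n = u^d/(1-u)^{d+1}$ holds in $\Z[[u]]$ and then compose with the substitution $u \mapsto s/(1-t)^c$, which is a well-defined continuous substitution once we work coefficient-wise in $s$ with coefficients in $\Z[t,t^{-1},(1-t)^{-1}]$. Everything else is a routine verification, and no deeper input (such as noetherianity or the structure theory of \cite{NR2}) is needed for this particular computation.
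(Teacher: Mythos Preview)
Your proof is correct and follows essentially the same approach as the paper: both compute $H_{\Fb_n}(t) = \binom{n}{d}(1-t)^{-cn}$ from the rank of $\Fb_n$ and the number of variables in $\Pb_n$, then sum over $n$ using the binomial series identity $\sum_{n \ge d}\binom{n}{d}u^n = u^d/(1-u)^{d+1}$ with $u = s/(1-t)^c$. Your added remark on the formal justification of the substitution is a nice bit of care that the paper leaves implicit.
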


\begin{proof}
For any $n \in \N_0$, the polynomial ring $\Pb_n$ has $n c$ variables. Hence, its Hilbert series is 
$H_{\Pb_n} (t) = \frac{1}{(1-t)^{n c}}$. Since $\Fb_n$ has rank $\binom{n}{d}$ as a free $\Pb_n$-module, it follows
\begin{align*}
H_{\Fb} (s, t) & = \sum_{n \ge 0} \binom{n}{d} \frac{1}{(1-t)^{n c}} s^n \\
& =\left  (\frac{s}{(1-t)^c} \right )^d \cdot  \sum_{k \ge 0} \binom{d+k}{k}  \left (\frac{s}{(1-t)^c} \right )^k \\
& = \left  (\frac{s}{(1-t)^c} \right )^d \cdot  \frac{1}{(1 - \frac{s}{(1-t)^c})^{d+1}} \\ 
& = \frac{s^d (1-t)^c}{[(1-t)^c - s]^{d+1}}, 
\end{align*}
where we use a binomial series for the third equality. 
\end{proof}


\section{Monomial Submodules}
\label{sec:language monomial submod} 

We will enumerate monomials in a monomial submodule using words in a suitable formal language. 
By showing that the language is regular it follows that monomial submodules have a rational equivariant Hilbert series. The bijection between words and monomials developed in this section will also be used in the following section where we establish more detailed information about rational functions describing equivariant Hilbert series. Moreover, the regularity of the describing language is used later in the final section for developing an algorithm to compute equivariant Hilbert series. 

Throughout this section we fix a positive integer $c$ and
 consider modules over the graded  $\OI$-algebra $\Pb = (\XO{1})^{\otimes c}$, where $K$ is  any commutative ring. Thus, for any integer $m \ge 0$, 
\[
\Pb_m = K[x_{i, j} \; : \; i \in [c], \ j \in [m]].
\]
We always use the standard grading in which every variable $x_{i, j}$ has degree one. 

Furthermore, we fix an integer $d \ge 0$ and consider the free $\OI$-module $\bF = \Fo{d}$ over $\Pb$. In width $m$ it is 
\[
\Fo{d}_m = \oplus_{\pi} \Pb_m e_{\pi} \cong (\Pb_m)^{\binom{m}{d}},
\]
where the sum is taken over all $\pi \in \Hom_{\FIO} ([d], [m])$. Note that $\Fb$ is generated by one element, namely $e_{\id_{[d]}}$. We set its degree to zero. This induces a grading on $\Fb$ and turns $\Fb$ into a graded $\OI$-module over $\Pb$ with $[\Fb]_j = 0$ if $j  < 0$. 

A \emph{monomial} in $\Fb = \Fo{d}$ is an
element of some $\Fb_m$ of the form
\[
x^u  e_{\pi} = x_{\lpnt, 1}^{u_1} \cdots x_{\lpnt, m}^{u_m} e_{\pi}, \quad \text{ where } \pi \in \Hom_{\FIO} ([d], [m]), \; u_j \in \N_0^c,  
\]
 the $i$-th entry of $u_j \in \N_0^c$ is the exponent of the variable $x_{i, j}$, and $x_{\lpnt, j}^{u_j}$ is the product of these powers. Thus, $x^u = x_{\lpnt, 1}^{u_1} \cdots x_{\lpnt, m}^{u_m}$ is a monomial in $\bP_m$.  Denote by $\Mon (\bP)$ and $\Mon (\M)$ the set of monomials of $\bP$ and of a submodule $\M$ of $\bF$, respectively.  
A \emph{monomial submodule} of $\Fo{d}$ is an $\OI$-submodule that is generated by monomials. 
 Since $\Fb = \Fo{d}$ is fixed in this section, we write  $\la E \ra$ instead of $\la  E\ra_{\Fb}$ for the submodule generated by a subset $E$ of $\Fb$. 

Set $\N_0^0 = \emptyset$ and define shifting operators $T_0,\ldots,T_d\colon \Mon (\bP) \times \N_0^d \to \Mon (\bP) \times \N_0^d$ by 
\begin{align}
     \label{eq:def T}
T_i (x_{k, l}, (p_1,\ldots,p_d)) = (x_{k, l+1}, (q_1,\ldots,q_d)), 
\end{align}
where $T_0$ acts as the identity on the second component whereas, for $i = 1,\ldots,d$, 
\[
q_j = \begin{cases}
p_j & \text{if } 1 \le j <i \\
p_j + 1 & \text{if } i \le j \le d, 
\end{cases}
\]
and Equation \eqref{eq:def T} is extended multiplicatively in the first component for $i = 0,\ldots,d$. For example, one gets $T_1 ((x_{4, 2}^6, (5,5,5)) = (x_{4, 3}^6, (6,6,6))$, $T_3 ((x_{4, 2}^6, (5,5,5)) = (x_{4, 3}^6, (5,5,6))$ and $T_2 (1, (5,5,5)) = (1, (5,6,6))$.

Consider a set $\Sigma = \{\xi_1,\ldots,\xi_c, \tau_0,\ldots,\tau_d\}$ with $c+d+1$ elements, and let 
$\Sigma^*$ be the free monoid on $\Sigma$. In other words, $\Sigma^*$ consists of strings of 
elements of $\Sigma$. A 
\emph{formal language} with words in the alphabet $\Sigma$ is any subset of $\Sigma^*$. We refer to the elements of $\Sigma$ as letters. 
An interested reader may consult, e.g., \cite{HU} for an introduction to formal languages. 

Define  recursively a map 
\begin{align}
     \label{eq:def nu}  
\eta = (\eta_1, \eta_2)\colon \Sigma^* \to \Mon (\bP) \times \N_0^d   
\end{align}
by 
\begin{itemize}
\item[(i)] $\eta (e) = (1, (0,\ldots,0))$, where $e$ is the empty word; 

\item[(ii)] $\eta (\xi_i w) = x_{i,1} \cdot \eta (w)$; and  

\item[(iii)] $\eta (\tau_i w ) = T_i (\eta (w))$. 
\end{itemize}
Thus, $\eta (w)$ is obtained by replacing each $\xi_i$ in $w$ by $x_{i, 1}$ and each $\tau_i$ by $T_i$ applied to the string following it.

Denote the set of strictly increasing maps $\pi\colon [d] \to \N$ by $\Hom_{\OI} ([d], \N)$. 
For ease of notation, we will identify a map $\pi \in \Hom_{\OI} ([d], \N)$ 
with the $d$-tuple 
$(\pi(1) < \pi (2) < \cdots < \pi (d))$ and write $x^u (p_1,\ldots,p_d)$ instead of $(x^u, (p_1,\ldots,p_d))$. For example, one computes  $\eta (\xi_1^j \tau_1^{m}) = x_{1,1}^j  (m,m,\ldots,m)$ and 
$\eta (\tau_0^{m-1} \xi_1^j ) = x_{1,m}^j  (0, 0,\ldots,0)$. 

The map $\eta$ is neither surjective nor injective. For example, $(4, 3, 6)$ is not in the image of $\eta$ and  $\eta (\xi_1 \xi_2)  =  \eta (\xi_2 \xi_1)$ as the variables $x_{i, j}$ commute. We are seeking a bijection induced by $\eta$ by restricting its domain and codomain. As a first step we introduce a suitable sublanguage of $\Sigma^*$. We say that a word $w$ in 
$\Sigma^*$ is \emph{standard} if every substring $\xi_i \xi_j$ in $w$ satisfies $i \le j$.  Denote the set 
of standard words in $\Sigma^*$ by $\Sigma^*_{\rm std}$. Furthermore, for each $j =0,1,\ldots,d$,  define a  language 
 $\cL_j = \{\xi_1,\ldots,\xi_c, \tau_j\}^*$ and set $\overline{\cL_j} =\{ \{\xi_1,\ldots,\xi_c\}^* \tau_j \}^*$.  
Note that every non-empty word in $\overline{\cL_j}$  ends with the letter $\tau_j$. Furthermore, one has $\cL_j = \overline{\cL_j} \{\xi_1,\ldots,\xi_c\}^*$. 

We now define a language $\cL \subset \Sigma^*$ by the formula
\[
\cL = \cL_1 \tau_1 \cL_2 \tau_2 \ldots \cL_d \tau_d \overline{\cL_0}, 
\]
and denote  by $\Ls$ the set of standard words in $\cL$, that is, 
$\Ls = \cL \cap \Sigma^*_{\rm std}$. For an integer $m \ge 0$, let $\Lm{m}$ be the set of words in $\Ls$ that contain precisely $m$ $\tau$-letters. Note that, by definition of $\cL$, every word in $\cL$ contains at least $d$ $\tau$-letters. Thus, $\Lm{m} = \emptyset$ if $m < d$. 

It is useful to have a more explicit description of the elements in $\Lm{m}$. To this end, we say $w \in \Sigma^*$ is a \emph{simple word} if it contains no $\tau$-letters, that is, $w \in \{\xi_1,\ldots,\xi_c \}^*$. 

\begin{lem}
\label{lem:desc standard words}
If $m \ge 1$ then the elements in $\Lm{m}$ are precisely the words of the form 
\[
w = w_1 \tau_{i_1} w_2 \tau_{i_2} \ldots w_m \tau_{i_m}, 
\]
where every $w_j$ is  a  standard simple word and $1 \le i_1 \le \cdots \le i_l \le d$ and $i_{l+1} = \cdots = i_m = 0$ for some integer $l$ with $d \le l \le m$ and $[d] \subset \{i_1,,\ldots,i_m\}$. 

Moreover, $\Lm{0}$ is empty if $d \ge 1$ and consists precisely of the empty word if $d = 0$. 
\end{lem}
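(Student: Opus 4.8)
The plan is to analyze a word of $\Ls$ by cutting it at its $\tau$-letters and reading off the subscripts of those letters from the defining factorization $\cL = \cL_1 \tau_1 \cL_2 \tau_2 \cdots \cL_d \tau_d \overline{\cL_0}$. I would first dispose of the case $m = 0$. If $d \ge 1$, then every word of $\cL$ contains the $d$ displayed occurrences $\tau_1,\ldots,\tau_d$, hence at least one $\tau$-letter, so $\Lm{0} = \emptyset$. If $d = 0$, then $\cL = \overline{\cL_0} = \{\{\xi_1,\ldots,\xi_c\}^* \tau_0\}^*$, and since every non-empty word of $\overline{\cL_0}$ ends in $\tau_0$, the only $\tau$-free word of $\cL$ is the empty word $e$, which is standard; thus $\Lm{0} = \{e\}$.

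Now assume $m \ge 1$. Any word $w \in \Sigma^*$ has a unique factorization $w = w_1 \tau_{i_1} w_2 \tau_{i_2} \cdots w_m \tau_{i_m} w_{m+1}$ with $w_1,\ldots,w_{m+1}$ simple words, obtained by cutting $w$ at each of its $\tau$-letters ($m$ being their number). Moreover, two consecutive $\xi$-letters of $w$ always lie in a common block $w_k$ (there is no $\tau$ between them), so $w$ is standard if and only if every $w_k$ is a standard simple word. Hence the lemma reduces to the assertion that a word $w$ with $m \ge 1$ many $\tau$-letters lies in $\cL$ if and only if $w_{m+1} = e$ and the subscript word $(i_1,\ldots,i_m)$ equals $1^{a_1} 2^{a_2} \cdots d^{a_d} 0^{a_0}$ for some $a_1,\ldots,a_d \ge 1$ and $a_0 \ge 0$; indeed, putting $l = a_1 + \cdots + a_d$, this is exactly the condition $1 \le i_1 \le \cdots \le i_l \le d$, $i_{l+1} = \cdots = i_m = 0$, $d \le l \le m$, $[d] \subseteq \{i_1,\ldots,i_m\}$ of the statement.

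For the forward direction I would write $w = u_1 \tau_1 u_2 \tau_2 \cdots u_d \tau_d u_0$ with $u_j \in \cL_j$ for $1 \le j \le d$ and $u_0 \in \overline{\cL_0}$. Since every $\tau$-letter of $u_j$ has subscript $j$ (by definition of $\cL_j$) and each displayed factor $\tau_j$ contributes one more letter of subscript $j$, while every $\tau$-letter of $u_0$ has subscript $0$ and $u_0$ ends in $\tau_0$ unless $u_0 = e$, reading the $\tau$-subscripts of $w$ left to right gives precisely $1^{a_1} \cdots d^{a_d} 0^{a_0}$ with $a_j \ge 1$ for every $j \in [d]$; in particular $w$ ends in a $\tau$-letter, so $w_{m+1} = e$. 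For the converse I would regroup the blocks of the factorization: the first $a_1$ blocks give $u_1 := w_1 \tau_1 w_2 \tau_1 \cdots \tau_1 w_{a_1} \in \cL_1$ followed by $\tau_1$, the next $a_2$ blocks give $u_2 \in \cL_2$ followed by $\tau_2$, and so on up to $u_d \in \cL_d$ followed by $\tau_d$, while the last $a_0$ blocks $w_{l+1}\tau_0 \cdots w_m \tau_0 = (w_{l+1}\tau_0)\cdots(w_m\tau_0)$ form a word $u_0 \in \overline{\cL_0}$; thus $w = u_1 \tau_1 u_2 \tau_2 \cdots u_d \tau_d u_0 \in \cL$.

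Combining the reduction of $\cL$-membership to the above subscript pattern with the reduction of standardness to the individual blocks then yields the claimed description of $\Lm{m}$. The one thing to be careful about is the bookkeeping in the regrouping step, in particular the degenerate cases $a_j = 1$ (where $u_j$ collapses to the single simple word $w_{a_1 + \cdots + a_{j-1} + 1}$), $a_0 = 0$ (where $u_0 = e$ and $w$ ends in $\tau_d$), and $d = 0$ (where the factors $u_1\tau_1 \cdots u_d\tau_d$ are absent and $\cL = \overline{\cL_0}$, matching the case $l = 0$ of the statement); apart from that, the argument is a direct unwinding of the definitions of $\cL_j$ and $\overline{\cL_0}$.
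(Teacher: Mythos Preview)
Your proposal is correct and follows essentially the same approach as the paper: both arguments unwind the definition $\cL = \cL_1 \tau_1 \cdots \cL_d \tau_d \overline{\cL_0}$ and observe that the only nontrivial point is that $w_{m+1}$ must be empty, which you deduce from the fact that every nonempty word of $\overline{\cL_0}$ ends in $\tau_0$. The paper's own proof is much terser---it asserts that the form follows from the definition and singles out only the verification that $w_{m+1} = e$---whereas you spell out the $m=0$ case, the reduction of standardness to the individual blocks, and both directions of the $\cL$-membership characterization explicitly; this is more thorough but not a different route.
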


\begin{proof}
This follows for the most part from the definition of $\Lm{m}$, except for the possibility that a word could end with a standard simple word other than the empty word. 

Consider $w = w_1 \tau_{i_1} w_2 \tau_{i_2} \ldots w_m \tau_{i_m} w_{m+1} \in \Lm{m}$. We have to show that $w_{m+1}$ must be the empty word. Indeed, since $[d] \subset \{i_1,,\ldots,i_m\}$. we get $w_{m+1} \in \overline{\cL_0}$ and that $w_{m+1}$ does not contain the letter $\tau_0$. This implies that $w_{m+1}$ is the empty word. 
\end{proof}

Note that the shortest word in $\Ls$ is $\tau_1 \tau_2 \ldots \tau_d$ and that $\eta (\tau_1 \tau_2 \ldots \tau_d) = (1,2,\ldots,d)$. This is an example of the following observation. 

\begin{lem}
    \label{lem:map mu}
The map $\eta\colon \Sigma^* \to \Mon (\bP) \times \N_0^d$ (see \eqref{eq:def nu} above) induces a map \\
$\tilde{\mu}\colon \Ls \to \Mon (\bP) \times \Hom_{\OI} ([d], \N)$,  
defined by $\tilde{\mu} (w) = \eta (w)$. 
\end{lem}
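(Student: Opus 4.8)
The statement to prove is \Cref{lem:map mu}: that $\eta$ restricts to a well-defined map $\tilde{\mu}\colon \Ls \to \Mon (\bP) \times \Hom_{\OI} ([d], \N)$. Since $\eta$ is already defined on all of $\Sigma^*$ and we are taking $\tilde\mu(w) = \eta(w)$, the only thing to check is that for every standard word $w \in \Ls$ the second component $\eta_2(w) \in \N_0^d$ is actually a strictly increasing $d$-tuple, i.e. an element of $\Hom_{\OI}([d],\N)$. (Well-definedness on the first component is automatic, as $\eta_1(w)$ is a monomial in $\bP$ by construction.) So the core assertion is: $w \in \Ls \implies \eta_2(w) = (p_1 < p_2 < \cdots < p_d)$ with all $p_j \ge 1$.

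\textbf{Main steps.} First I would use the explicit description of words in $\Lm{m}$ from \Cref{lem:desc standard words}: any $w \in \Ls$ with $m$ many $\tau$-letters has the form $w = w_1\tau_{i_1}w_2\tau_{i_2}\cdots w_m\tau_{i_m}$ with each $w_j$ a standard simple word, $1 \le i_1 \le \cdots \le i_l \le d$, $i_{l+1} = \cdots = i_m = 0$, and crucially $[d] \subset \{i_1,\dots,i_m\}$. Next I would track how the second component evolves as $\eta$ is computed by reading $w$ from right to left (the recursion in \eqref{eq:def nu} prepends letters). The simple subwords $w_j$ and the $\tau_0$-letters do not change the second component at all — only the letters $\tau_i$ with $1 \le i \le d$ act nontrivially, and by the definition of $T_i$ in \eqref{eq:def T} the letter $\tau_i$ increments coordinates $i, i+1, \dots, d$ by $1$. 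Starting from $(0,\dots,0)$ and applying the operators $T_{i_m}, T_{i_{m-1}}, \dots, T_{i_1}$ in that order (reading right to left), each index $i \in [d]$ that appears among $\{i_1,\dots,i_l\}$ contributes $+1$ to coordinates $i,\dots,d$. Since $[d] \subseteq \{i_1,\dots,i_l\}$, every value $1,2,\dots,d$ occurs at least once, so coordinate $j$ of $\eta_2(w)$ equals the number of occurrences (with multiplicity) of indices $i$ in $\{i_1,\dots,i_l\}$ with $i \le j$. This count is weakly increasing in $j$, and it strictly increases from $j-1$ to $j$ because the value $i = j$ is guaranteed to occur at least once; also coordinate $1$ is at least $1$. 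Hence $\eta_2(w)$ is a strictly increasing tuple in $\N^d$, i.e. lies in $\Hom_{\OI}([d],\N)$. The case $d = 0$ is trivial ($\Hom_{\OI}([0],\N)$ is a single point and $\N_0^0 = \emptyset$), and for $d \ge 1$ the set $\Lm{0}$ is empty so there is nothing more to verify.

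\textbf{Expected obstacle.} There is essentially no hard mathematical content here — the lemma is a bookkeeping statement — so the only ``obstacle'' is organizing the right-to-left evaluation of $\eta$ cleanly and making precise the claim that each coordinate of $\eta_2(w)$ counts occurrences of small-enough indices. I would phrase this as: after reading the whole word, $\eta_2(w)_j = \#\{\,k : 1 \le k \le m,\ 1 \le i_k \le j\,\}$, prove it by induction on $m$ using \eqref{eq:def T}, and then invoke $[d] \subset \{i_1,\dots,i_m\}$ to get strict monotonicity. A small point worth remarking is that the standardness condition on $w$ (governing the order of $\xi$-letters) plays no role in this particular lemma — it only matters later, for injectivity of the induced map — but including $w \in \Ls$ rather than just $w \in \cL$ in the hypothesis does no harm. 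I would keep the proof to a few lines, citing \Cref{lem:desc standard words} for the shape of $w$ and \eqref{eq:def T} for the action of the $T_i$.
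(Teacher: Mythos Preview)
Your proposal is correct and follows essentially the same approach as the paper's proof: both arguments rest on the fact that every $w \in \Ls$ contains each of the letters $\tau_1,\ldots,\tau_d$ at least once, and track the effect of the operators $T_i$ on the second component starting from $(0,\ldots,0)$. The paper phrases this slightly more tersely---observing that each $T_i$ preserves weak monotonicity of the $d$-tuple and that the presence of every $\tau_i$ then forces strict monotonicity---whereas you give the explicit counting formula $\eta_2(w)_j = \#\{k : 1 \le i_k \le j\}$, but the content is the same.
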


\begin{proof}
Every shifting operator $T_i$ transforms a $d$-tuple $(p_1,\ldots,p_d)$ with weakly increasing entries $p_1 \le p_2 \le \cdots \le p_d$ into another $d$-tuple with this property. Since every word $w \in \Ls$ contains each of the letters $\tau_1, \tau_2, \ldots, \tau_m$ at least once, it follows that 
$\eta (w) = x^u (p_1,\ldots,p_d)$ with $p_1 < \cdots < p_d$, as claimed. 
\end{proof}

The above map $\tilde{\mu}$ produces the desired bijections. We use multi-index notation $\xi^a$ with $a = (a_1,\ldots,a_c) \in \N_0^c$ to denote the standard simple word $\xi_1^{a_1} \ldots \xi_c^{a_c}$ in which the letter $\xi_i$ is repeated $a_i$ times. 

\begin{prop}
    \label{prop:bijection}
For every integer $m \ge 0$, the map $\tilde{\mu}\colon \Ls \to \Mon (\bP) \times \Hom_{\OI} ([d], \N)$ induces a bijection $\mu_m \colon \Lm{m} \to \Mon (\Fb_m), \ w \mapsto x^u e_{\pi}$,  where $\pi \colon [d] \to [m]$ is the map such that $\tilde{\mu} (w) = x^u (\pi (1),\ldots, \pi (d))$. In particular, this gives a bijection 
\[
\mu \colon \Ls \to \Mon (\Fb) = \coprod_{m \ge 0} \Mon (\bP_m) \times \Hom_{\OI} ([d], [m]). 
\]

More precisely, if $x^u e_{\pi} = x_{\lpnt, 1}^{u_1} \cdots x_{\lpnt, m}^{u_m} e_{\pi} \in \Mon (\Fb_m)$, 
then the unique word $w \in \Lm{m}$ with $\mu (w) = x^u e_{\pi}$ is 
\[
w = \xi^{u_1} \tau_{\iota (1)} \xi^{u_2} \tau_{\iota (2)} \cdots \xi^{u_m} \tau_{\iota (m)},  
\]
where 
\[
\iota (k) = \begin{cases}
0 & \text{ if }  \pi (d) < k \le m\\
j & \text{ if } \pi(j-1) < k \le \pi (j)
\end{cases}
\]
and we set $\pi (0) = 0$. 
\end{prop}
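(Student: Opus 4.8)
The plan is to first produce an explicit candidate word for a given monomial, then verify it lies in $\Lm{m}$, then check it maps to the right monomial, and finally argue uniqueness. So, given $x^u e_\pi = x_{\lpnt,1}^{u_1}\cdots x_{\lpnt,m}^{u_m} e_\pi \in \Mon(\Fb_m)$ with $\pi \in \Hom_{\FIO}([d],[m])$, I would set up the index function $\iota(k)$ as in the statement: $\iota(k)=0$ for $\pi(d) < k \le m$ and $\iota(k)=j$ when $\pi(j-1) < k \le \pi(j)$ (with $\pi(0)=0$). The key observation is that since $\pi$ is strictly increasing, the preimages $(\pi(j-1),\pi(j)]$ partition $[m]$ into $d$ nonempty consecutive blocks (the $j$-th block of length $\pi(j)-\pi(j-1)\ge 1$), plus a possibly empty tail $(\pi(d),m]$; hence among $\iota(1),\dots,\iota(m)$ each value in $[d]$ occurs at least once, the nonzero values are weakly increasing, and all the $0$'s come at the end. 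Then I would define $w = \xi^{u_1}\tau_{\iota(1)}\,\xi^{u_2}\tau_{\iota(2)}\cdots \xi^{u_m}\tau_{\iota(m)}$, where $\xi^{u_k}$ is the standard simple word $\xi_1^{(u_k)_1}\cdots\xi_c^{(u_k)_c}$. By construction each $\xi^{u_k}$ is a standard simple word, so every $\xi_i\xi_j$-substring of $w$ has $i\le j$; by the block structure of $\iota$ the word $w$ has the shape described in \Cref{lem:desc standard words} with exactly $m$ $\tau$-letters; hence $w \in \Lm{m}$.

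Next I would verify $\tilde\mu(w) = x^u(\pi(1),\dots,\pi(d))$ by induction on $m$, or more transparently by tracking the recursion for $\eta$ directly. Reading $w$ from right to left, applying the rule $\eta(\tau_i w') = T_i(\eta(w'))$ and $\eta(\xi_i w') = x_{i,1}\cdot\eta(w')$: the rightmost block $\xi^{u_m}\tau_{\iota(m)}$ contributes, after all subsequent $\tau_0$'s (which act as identity on the $\N_0^d$-component), eventually $x_{\lpnt,m}^{u_m}$ in the monomial component; inductively, prepending $\xi^{u_k}\tau_{\iota(k)}$ multiplies the accumulated monomial by $x_{\lpnt, k}^{u_k}$ — because there are exactly $m-k$ subsequent $\tau$-letters, each shift operator advances the second index by one, so the variables $x_{\lpnt,1}$ introduced by $\xi^{u_k}$ end up as $x_{\lpnt,k}$. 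For the second component, the crucial point is that the operators $T_0$ (for indices $>\pi(d)$) leave the $d$-tuple fixed, while the operators $T_{\iota(k)}$ with $\iota(k)=j$ increment exactly the coordinates $j,j+1,\dots,d$ by one; since $\iota$ takes the value $j$ exactly $\pi(j)-\pi(j-1)$ times (for the blocks) and each occurrence increments coordinates $\ge j$, a telescoping count shows coordinate $j$ ends up equal to $\sum_{i\le j}(\pi(i)-\pi(i-1)) = \pi(j)$. Thus $\tilde\mu(w) = x^u(\pi(1),\dots,\pi(d))$, which by definition of $\mu_m$ (from \Cref{prop:bijection}, via \Cref{lem:map mu}) means $\mu_m(w) = x^u e_\pi$, so $\mu_m$ is surjective.

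For injectivity, suppose $w, w' \in \Lm{m}$ with $\mu_m(w) = \mu_m(w') = x^u e_\pi$. By \Cref{lem:desc standard words}, write $w = w_1\tau_{i_1}\cdots w_m\tau_{i_m}$ and $w' = w_1'\tau_{i_1'}\cdots w_m'\tau_{i_m'}$ with each $w_k, w_k'$ a standard simple word. From the computation above, the monomial component of $\tilde\mu(w)$ records $w_k$ as the exponent vector at column $k$: precisely, $\eta_1(w) = \prod_{k=1}^m x_{\lpnt,k}^{v_k}$ where $\xi^{v_k} = w_k$ (using that $w_k$ is a \emph{standard} simple word, so it is uniquely determined by its multiset of letters, i.e. by $v_k\in\N_0^c$). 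Since $\eta_1(w) = x^u = \eta_1(w')$ and column exponents are read off uniquely, $v_k = u_k = v_k'$, hence $w_k = w_k' = \xi^{u_k}$ for all $k$. Similarly the second component $\tilde\mu(w) = (\pi(1),\dots,\pi(d)) = \tilde\mu(w')$ together with the telescoping identity forces, for each $j$, the number of $k$ with $i_k = j$ to equal $\pi(j)-\pi(j-1)$, and since the nonzero $i_k$ are weakly increasing with all zeros at the end, this pins down the sequence $(i_1,\dots,i_m)$ uniquely — it must be $\iota(1),\dots,\iota(m)$. Therefore $w = w'$, proving $\mu_m$ is injective, hence bijective, and the explicit formula for $\mu_m^{-1}(x^u e_\pi)$ is exactly the word $w$ constructed above. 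Taking the disjoint union over $m$ gives the bijection $\mu\colon \Ls \to \Mon(\Fb)$.

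The main obstacle I anticipate is the bookkeeping in the second step: correctly tracking how the column index of each $x_{i,1}$ introduced by a $\xi$-letter gets advanced by precisely the number of $\tau$-letters to its left-to-be-applied (equivalently, to its right in the word), and simultaneously the telescoping argument showing coordinate $j$ of the $d$-tuple equals $\pi(j)$. Neither is deep, but it is the place where an off-by-one error is easy, so I would state it as a clean sublemma — namely that for a word $w = \xi^{u_1}\tau_{j_1}\cdots\xi^{u_m}\tau_{j_m}$ with $1\le j_1\le\cdots\le j_l\le d$, $j_{l+1}=\cdots=j_m=0$, and each value in $[d]$ attained, one has $\eta(w) = \bigl(\prod_{k=1}^m x_{\lpnt,k}^{u_k}\bigr)\bigl(\#\{k: j_k\ge 1\},\ \#\{k: j_k\ge 2\}+\!\!\!\sum_{\text{lower}}\!\!\dots\bigr)$, i.e. the $j$-th coordinate is $\#\{k : j_k \ge j\}$ — and prove it by a one-line induction on $m$ using $\eta(\xi^{u_1}\tau_{j_1} w'') = x_{\lpnt,1}^{u_1}\cdot T_{j_1}(\eta(w''))$ plus the fact that $T_{j_1}$ applied after $w''$ (which already has its columns at $2,\dots,m$ and coordinate counts over $j_2,\dots,j_m$) shifts all columns up by one and increments coordinates $\ge j_1$ by one.
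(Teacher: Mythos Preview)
Your approach is essentially the same as the paper's: construct the explicit candidate word, verify it lies in $\Lm{m}$ and maps correctly (surjectivity), then argue that both the simple-word blocks and the $\tau$-indices are forced (injectivity). The paper does injectivity by a minimal-counterexample contradiction on the first block where two preimages differ, while you read off the blocks and indices directly; these are equivalent.

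Two bookkeeping slips to fix. First, in your final sublemma you write that the $j$-th coordinate of $\eta_2(w)$ is $\#\{k : j_k \ge j\}$; this contradicts your own (correct) telescoping argument earlier. Since $T_i$ increments coordinates $i,\ldots,d$, coordinate $j$ is incremented precisely when $1 \le i \le j$, so the right count is $\#\{k : 1 \le j_k \le j\}$, which indeed equals $\pi(j)$. Second, the phrase ``there are exactly $m-k$ subsequent $\tau$-letters'' is off: the variables introduced by $\xi^{u_k}$ are shifted by the $\tau$-letters to its \emph{left} in the word (applied later in the recursion), and there are $k-1$ of those, which is why the column lands at $k$. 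Neither error affects the structure of the argument.
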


\begin{proof}
Surjectivity of $\mu$ is a consequence of the stated description of a preimage $w$ of $x^u \pi$. Indeed, the definition of $\iota$ gives that  the letter $\tau_1$ occurs $\pi(1)$ times in $w$, the letter $\tau_i$ with $2 \le i \le d$ occurs $\pi (i) - \pi (i-1)$ times, and the letter $\tau_0$ occurs $m - \pi (d)$ times. This implies $\mu (w) = x^u  e_{\pi}$. 

Now we show that $\mu$ is  injective. If $m < d$ the domain and codomain are empty. Let $m \ge d$ and consider words $v, w \in \Lm{m}$ with $\mu (v) = \mu (w) = x^u e_{\pi}$. By definition of $\mu$, it follows that $v$ and $w$ have the same distribution of $\tau$-letters. Thus, \Cref{lem:desc standard words} gives 
\begin{align*}
v & = v_1 \tau_{i_1} v_2 \tau_{i_2} \ldots v_m \tau_{i_m}, \\
w & = w_1 \tau_{i_1} w_2 \tau_{i_2} \ldots w_m \tau_{i_m}
\end{align*}
with standard simple words $v_j, w_j$. Assume $ v \neq w$ and set $k = \min \{ j \in [m] \; | \; v_j \neq w_j\}$. Write $v_k = \xi_1^{a_1} \cdots \xi_c^{a_c}$ and $w_k = \xi_1^{b_1} \cdots \xi_c^{b_c}$. Then one gets 
\begin{align*}
\mu (v) & = f \cdot x_{1,k}^{a_1} \cdots x_{c, k} ^{a_c} \cdot g e_{\pi}\\
\mu (w) & = f \cdot x_{1,k}^{b_1} \cdots x_{c, k} ^{b_c} \cdot h e_{\pi}
\end{align*}
with monomials $f \in \bP_{k-1}$ and $g, h \in K[x_{i, j} \; | \; i \in [c], \ k < j \le m]$. By the choice of $k$, we have $a \neq b$.  It follows   $x_{\lpnt, k}^a \neq x_{\lpnt, k}^b$, and so $\mu (v) \neq \mu (w)$. This contradiction proves $v = w$.   
\end{proof}

For a subset $S$ of $\bF$, we denote by $\la S \ra$ the submodule of $\bF$ generated by $S$. Thus, if $x^u e_{\pi}$ is a monomial in $\bF$, then $\Mon (\la x^u e_{\pi} \ra)$ consists of the monomials $x^v e_{\sigma}$ that are  \emph{$\OI$-divisible} by $x^u e_{\pi}$ (see \cite[Definition 6.1]{NR2}). In particular, one has for a monomial $x_{\lpnt, 1}^{v_1} \cdots x_{\lpnt, n}^{v_n}e_{\rho}$, 
\begin{equation}
\label{eq:def-divisible}
\begin{split}
x_{\lpnt, 1}^{v_1} \cdots x_{\lpnt, n}^{v_n}e_{\rho} \in \la x^u e_{\pi} \ra & \text{ if and only if there is some } \eps \in \Hom_{\FIO} ([m], [n]) \\
& \; \; \; \text{such that } \rho = \eps \circ \pi \text{ and } u_i \le v_{\eps (i)} \text{ for each } i \in [m].
\end{split}
\end{equation}
If $x^{u_1}e_{\pi_1},\ldots,x^{u_s}e_{\pi_s}$ is a set of monomials, observe that 
\begin{align}
     \label{eq:union monomials}
\Mon (\la x^{u_1}e_{\pi_1},\ldots,x^{u_s}e_{\pi_s} \ra ) = \bigcup_{i=1}^s \Mon (\la x^{u_i} e_{\pi_i} \ra). 
\end{align}

Recall that we identify $\Mon (\bF_m)$ with $\Mon (\bP_m) \times \Hom_{\OI} ([d], [m])$ and $\eta_1 \colon \cL \to \Mon (\Pb)$ is defined by Equation \eqref{eq:def nu}.  

\begin{prop}
    \label{prop:inverse of monomial} 
Consider any standard word $w = w_1 \tau_{i_1} w_2 \tau_{i_2} \ldots w_m \tau_{i_m} \in \Lm{m}$ as described in \Cref{lem:desc standard words}. Then the preimage under $\mu$ of the set of monomials in $\la \mu (w) \ra$ is 
\begin{align*}
\mu^{-1} (\Mon (\la \mu (w) \ra) ) = \overline{\cL_{i_1}}  \cN_{w_1} \tau_{i_1} \overline{\cL_{i_2}} \cN_{w_2} \tau_{i_2} \ldots \overline{\cL_{i_m}}  \cN_{w_m} \tau_{i_m }\overline{\cL_0}  \cap \Ls, 
\end{align*}
where $\cN_{w_j}$ is the set of simple words $v$ such that the monomial $\eta_1 (v)$ is divisible by $\eta_1 (w_j)$. 
\end{prop}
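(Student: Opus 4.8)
The plan is to verify the claimed equality of subsets of $\Ls$ by a double inclusion, translating $\OI$-divisibility as described in \eqref{eq:def-divisible} into combinatorial conditions on standard words via the explicit form of $\mu$ given in \Cref{prop:bijection}. Write $\mu(w) = x^u e_\pi \in \Mon(\bF_m)$, where, by \Cref{lem:desc standard words}, the $\tau$-pattern $(i_1,\ldots,i_m)$ of $w$ records $\pi$ through the rule that $\tau_j$ (for $1 \le j \le d$) occurs $\pi(j)-\pi(j-1)$ times and $\tau_0$ occurs $m-\pi(d)$ times. The key point is that a monomial $x^v e_\rho \in \Mon(\bF_n)$ lies in $\la \mu(w) \ra$ exactly when there is $\eps \in \Hom_{\FIO}([m],[n])$ with $\rho = \eps \circ \pi$ and, columnwise, $u_i \le v_{\eps(i)}$ for all $i \in [m]$; I would read off how such an $\eps$ manifests in the unique standard word $\mu^{-1}(x^v e_\rho)$ from \Cref{prop:bijection}.

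First I would prove the inclusion $\subseteq$. Take a standard word $v' \in \mu^{-1}(\Mon(\la \mu(w)\ra))$, say $\mu(v') = x^v e_\rho$ with $x^v e_\rho \in \la \mu(w)\ra$, and fix a witnessing $\eps$. Using the formula for $\mu^{-1}$ in \Cref{prop:bijection}, the word $v'$ has the shape $\xi^{v_1}\tau_{\iota(1)} \cdots \xi^{v_n}\tau_{\iota(n)}$ where $\iota$ encodes $\rho$. The condition $\rho = \eps\circ\pi$ means the positions $\eps(1) < \cdots < \eps(m)$ in $[n]$ are precisely those where the "$\rho$-level" jumps from $\pi(k-1)$ to $\pi(k)$; concretely, between two consecutive images $\eps(k)$ and $\eps(k+1)$ the word $v'$ reads some number of $\tau_{i_{k+1}}$-letters coming from $\overline{\cL_{i_{k+1}}}$-type blocks, then at position $\eps(k+1)$ a final $\tau_{i_{k+1}}$. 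Grouping the letters of $v'$ between these marked $\tau$-positions exhibits $v'$ as a member of $\overline{\cL_{i_1}}\cN'_1 \tau_{i_1} \cdots \overline{\cL_{i_m}}\cN'_m \tau_{i_m}\overline{\cL_0}$, where $\cN'_k$ is the simple word of $\xi$-letters sitting at position $\eps(k)$, i.e.\ $\xi^{v_{\eps(k)}}$. The inequality $u_k \le v_{\eps(k)}$ says exactly $\eta_1(w_k) \mid \eta_1(\cN'_k)$, so $\cN'_k \in \cN_{w_k}$; hence $v'$ lies in the right-hand set. The reverse inclusion $\supseteq$ runs the same dictionary backwards: given a standard word in $\overline{\cL_{i_1}}\cN_{w_1}\tau_{i_1}\cdots\overline{\cL_{i_m}}\cN_{w_m}\tau_{i_m}\overline{\cL_0}$, the $m$ marked copies of $\tau_{i_1},\ldots,\tau_{i_m}$ (the ones immediately following the $\cN_{w_k}$-blocks) determine a strictly increasing map $\eps\colon[m]\to[n]$; one checks $\rho = \eps\circ\pi$ from the $\tau$-distributions and $u_k \le v_{\eps(k)}$ from $\eta_1(w_k)\mid\eta_1(\cN_{w_k})$, so $\mu$ of this word lies in $\la\mu(w)\ra$ by \eqref{eq:def-divisible}.

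The main obstacle I anticipate is bookkeeping the correspondence between a morphism $\eps \in \Hom_{\FIO}([m],[n])$ and the structural decomposition of the standard word $\mu^{-1}(x^v e_\rho)$ — in particular making precise that "$\rho = \eps\circ\pi$" is equivalent to the $\tau$-letters of $v'$ being partitionable so that, when collapsed, they reproduce the $\tau$-pattern $(i_1,\ldots,i_m)$ of $w$ with exactly $m$ distinguished occurrences. This amounts to checking that the iterated concatenation $\overline{\cL_{i_k}}$ (each of whose words ends in $\tau_{i_k}$) inserts precisely the "extra" slots $\{1,\ldots,n\}\setminus\{\eps(1),\ldots,\eps(m)\}$ with the correct $\tau$-labels dictated by $\iota$, and that standardness of $v'$ is automatic from standardness of the simple-word factors (the $\tau$-letters never create a bad substring $\xi_i\xi_j$ with $i>j$). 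Once this dictionary is nailed down, the divisibility conditions and the membership in $\Ls$ on both sides match term by term, and intersecting with $\Ls$ on the right is exactly what enforces that the word is standard and has its $\tau$-letters weakly increasing among $\tau_1,\ldots,\tau_d$ before the $\tau_0$'s, completing the proof.
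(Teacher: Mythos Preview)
Your proposal is correct and follows the same overall double-inclusion strategy as the paper, translating $\OI$-divisibility via the witnessing $\eps$ into the block structure of standard words. The verification you sketch---that for $\eps(k) < \ell \le \eps(k+1)$ the $\tau$-label $\iota(\ell)$ computed from $\rho = \eps \circ \pi$ agrees with $i_{k+1}$, and that the simple word at position $\eps(k)$ lands in $\cN_{w_k}$ by the column inequality $u_k \le v_{\eps(k)}$---is exactly what makes the parsing go through.

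There is one methodological difference worth noting. For the inclusion $\mu^{-1}(\Mon(\la\mu(w)\ra)) \subseteq \cP \cap \Ls$, the paper does \emph{not} argue directly from the explicit formula for $\mu^{-1}$ as you do. Instead it factors $\eps$ through the elementary morphisms $\sigma_k \in \Hom_{\FIO}([m],[m+1])$ using the identity $\Hom_{\FIO}([m],[n]) = \Hom_{\FIO}([m+1],[n]) \circ \Hom_{\FIO}([m],[m+1])$, reducing to the case $n = m+1$ where one checks by hand that $\mu^{-1}((\sigma_{k-1})_*(x^u) e_{\sigma_{k-1}\circ\pi})$ is obtained from $w$ by inserting a single $\tau_{i_k}$ in front of $w_k\tau_{i_k}$ (or appending $\tau_0$). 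Only afterwards is multiplication by an arbitrary monomial $x^v$ handled by inserting $\xi$-letters. Your direct approach bypasses this induction at the cost of the bookkeeping you flag; the paper's approach trades that bookkeeping for an explicit case check on $\sigma_k$ but requires the factorization lemma. Both work, and for the reverse inclusion the two arguments are essentially identical: the paper also extracts $\eps$ from the block lengths (setting $\eps(j) = q_1 + \cdots + q_j$ where $q_j$ counts $\tau$-letters in the $j$-th block) and verifies $\rho = \eps \circ \pi$ by the same $\tau$-counting you describe.
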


\begin{proof}
Observe that  $\cL_{i_1}  w_1 \tau_{i_1} \cL_{i_2} w_2 \tau_{i_2} \ldots \cL_{i_m}  w_m \tau_{i_m} \overline{\cL_0}$ is a subset of $\cL$ and that it consists precisely of the words obtained 
from $w$ by  inserting in front of a substring $w_k \tau_{i_k}$ any string consisting only of letters 
$\tau_{i_k}, \xi_1,\ldots,\xi_c$ or by appending $w$ by a word in $\overline{\cL_0}$. 
 Any such resulting word is clearly in $\cL$. Reordering then neighboring $\xi$-letters suitably converts it into a standard word in $\Pc = \overline{\cL_{i_1}}  \cN_{w_1} \tau_{i_1} \overline{\cL_{i_2}} \cN_{w_2} \tau_{i_2} \ldots \overline{\cL_{i_m}}  \cN_{w_m} \tau_{i_m }\overline{\cL_0}$.
\smallskip 
 
 We first show $\mu^{-1} (\Mon (\la \mu (w) \ra)) \subset \Pc \cap \Ls$. 
 
 To this end set $x^u e_{\pi} = \mu (w)$. The monomials in $\la x^u e_{\pi} \ra$ are of the form $\eps_* (x^u) \cdot x^v \cdot e_{\eps \circ \pi} \in \bF_n$ for some $\eps \in \Hom_{\OI} ([m], [n])$ and some monomial $x^v \in \bP_n$ (see \Cref{rem:poly-alg}(ii) for the definition of $\eps_*$).    We have to show $\mu^{-1} (\eps_* (x^u) \cdot x^v \cdot e_{\eps \circ \pi}) \in \cP \cap \Ls$. We proceed in two steps. 
 
(I)  We begin by proving $\mu^{-1} (\eps_* (x^u) \cdot e_{\eps \circ \pi}) \in \cP \cap \Ls$. 
 
 If $n =m$ then $\eps$ is the identity, and we are done. Assume $n = m+1$, and so $\eps \in \Hom_{\OI} ([m], [m+1])$. Note that $\Hom_{\OI} ([m], [m+1]) = \{\sigma_0,\ldots,\sigma_m \}$, where $\sigma_k$ is defined by 
 \[
 \sigma_k (j) = \begin{cases}
 j & \text{ if } j \le k \\
 j+1 & \text{ if } j > k. 
 \end{cases}
 \]
 
For $k \in [m+1]$, we claim $\mu^{-1} ((\sigma_{k-1})_* (x^u) \cdot e_{\sigma_{k-1} \circ \pi}) = w'$, 
 where $w' = w \tau_0$ if $k = m+1$ and $w'$ is obtained from $w$ by replacing the substring 
 $w_k \tau_{i_k}$ by $\tau_{i_k} w_k \tau_{i_k}$ if $1 \le k \le m$. Indeed, $w'$ is a standard word in 
$\cL$. Thus, it suffices to check $\mu (w') = (\sigma_{k-1})_* (x^u) \cdot e_{\sigma_{k-1} \circ \pi}$. 
This is a routine computation. We omit the details. We have shown that 
$\mu^{-1} (\eps_* (x^u) \cdot e_{\eps \circ \pi}) \in \cP \cap \Ls$ for any 
$\eps \in \Hom_{\OI} ([m], [m+1])$. 

Consider any $\eps \in \Hom_{\OI} ([m], [n])$. with $n > m+1$.  
By \cite[Lemma 2.3]{NR2}, one has 
 \[
 \Hom_{\FIO} ([m], [n]) = \Hom_{\FIO} ([m+1], [n]) \circ \Hom_{\FIO} ([m], [m+1])
 \]
for any positive integers $m < n$. Hence, applying $n-m+1$ times the argument for $n = m+1$ gives $\mu^{-1} (\eps_* (x^u) \cdot e_{\eps \circ \pi}) \in \cP \cap \Ls$. 

(II) It remains to show $\mu^{-1} (x^v \cdot \eps_* (x^u) \cdot e_{\eps \circ \pi}) \in \Pc \cap \Ls$ for every monomial $x^v \in \Pb_n$. 

Indeed, write $\tilde{w} = \mu^{-1} (\eps_* (x^u) \cdot e_{\eps \circ \pi})$ as 
\[
\tilde{w} = \tilde{w}_1 \tau_{j_1} \tilde{w}_2 \tau_{j_2} \ldots \tilde{w}_n \tau_{j_n}
\]
and 
let $w'$ be the word obtained from $\tilde{w}$ by inserting in front of each substring $\tilde{w}_k$ of $\tilde{w}$ the 
string $\xi^{v_k}$. It follows that $w'$ is in $\cP$ as $\tilde{w} \in \cP$ by Step (I). 
 One computes $\eta (w') = x^{\tilde{u}+v} e_{\eps \circ \pi} = x^v \mu (\tilde{w})$, where $x^{\tilde{u}} = \eps_* (x^u)$. Reorder neighboring $\xi$-letters in $w'$ to obtain a standard word $w''$. Then $\mu (w'') = \eta (w'') = \eta (w') = x^v \mu (\tilde{w})$, as desired. 
\smallskip

Secondly, to complete the argument we prove $\mu^{-1} (\Mon (\la \mu (w) \ra) \supset \Pc \cap \Ls$.
Consider any word $v = v_1  w_1 \tau_{i_1} v_2 w_2 \tau_{i_2} \ldots v_m w_m \tau_{i_m} v_0$ in $\cP_n$, that is, $v_0 \in  \overline{\cL_0}$ and $v_k \in \cL_{i_k}$ for $k = 1,\ldots,m$. 

Assume first that $v_0$ is the empty word. 
For $j \in [m]$, denote by $q_j \ge 1$ the number of $\tau$-letters occurring in the substring $v_j w_j \tau_{i_j}$. 
In $v$ replace each $v_j$ by $ \tau_{i_j}^{q_j - 1}$ to obtain the word
\[
v' = \tau_{i_1}^{q_1 - 1}  w_1 \tau_{i_1}  \tau_{i_2}^{q_2 - 1}  w_2 \tau_{i_2} \ldots  \tau_{i_m}^{q_m - 1}  w_m \tau_{i_m}.
\]
It follows that $\eta (v) = x^b \eta (v')$ for some monomial $x^b \in \bP_n$. We claim that there is some $\eps \in \Hom_{\OI} ([m], [n])$ such that  $\eta(v') = \eps (x_u) e_{\eps \circ \pi}$.  This implies $\eta (v) \in \la x^u e_{\pi} \ra$, and thus proves the desired inclusion if $v_0$ is the empty word. 

Indeed, a computation gives 
\[
\eta (v') = x_{\lpnt, q_1}^{u_1} x_{\lpnt, q_1 + q_2}^{u_2} \cdots x_{\lpnt, q_1 + \cdots + q_m}^{u_m} (\rho (1),\ldots,\rho(d)) , 
\]
where $\rho \colon [d] \to [n]$ is the map such that $\rho (k)$ is equal to the number of $\tau$-letters occurring in $v'$ whose index is positive and at most $k$. Since the integers $q_j$ are positive, setting $\eps (j) = q_1 + q_2 + \cdots + q_j$ gives a map $\eps \in \Hom_{\OI} ([m], [n])$ with $\eta_1 (v') = \eps (x^u)$. It remains to show that $\rho = \eps \circ \pi$. 

Note that $p_k = \pi (k)$ is the number of $\tau$-letters occurring in $w$ whose index is positive and at most $k$. By the ordering of the $\tau$-letters in $\cL$, this is equivalent to $p_k = \max \{j \in [m] \; | \; 1 \le i_j \le k \}$. It follows that $\eps (p_k) = q_1 + q_2 + \cdots + q_{p_k}$ is equal to the sum over integers $q_j$ with $1 \le i_j \le k$. The latter is precisely $\rho (k)$. Thus, we have shown $\rho = \eps \circ \pi$, which completes the argument in this case. 

Second, it remains to consider any word $v v_0 \in \cP_n \cap \Ls$ with 
$v_0 \in \overline{\cL_0}$ and $v$ as above, that is, 
$v \in  \overline{\cL_{i_1}}  \cN_{w_1} \tau_{i_1} \overline{\cL_{i_2}} \cN_{w_2} \tau_{i_2} \ldots \overline{\cL_{i_m}}  \cN_{w_m} \tau_{i_m }$. Denote 
by $q_0$ the number of occurrences of $\tau_0$ in $v_0$. Then $\tilde{\mu} (v \tau_0^{q_0}) = \tilde{\mu} (v)$, but $\mu (v \tau_0^{q_0}) \in \Fb_n$  and $\mu (v) \in \bF_{n-q_0}$. 
By the above argument we 
know $\mu (v) \in \la x^u e_{\pi} \ra$ and so $\mu (v \tau_0^{q_0}) \in \la x^u e_{\pi} \ra$. Since $\mu (v v_0) = x^b \mu (v \tau_0^{q_0})$ for some 
monomial $x^b \in \bP_n$, we obtain $\mu (v v_0) \in \la x^u e_{\pi} \ra$, as desired. 
\end{proof}

Let us record a special case of the previous result.  

\begin{cor}
   \label{cor:insertion}
Consider any monomial $x^a e_{\pi} \in \bF_m$ and its corresponding standard word 
\[
\mu^{-1} (x^a e_{\pi}) = w_1 \tau_{i_1} w_2 \tau_{i_2} \ldots w_m \tau_{i_m} \in \Lm{m}.  
\]
Then a monomial $x^b e_{\rho} \in \bF_n$ is equal to $\eps (x^a e_{\pi})$ for some $\eps \in \Hom_{\OI}([m], [n])$ if and only if 
\[
\mu^{-1} (x^b e_{\rho}) \in \{\tau_{i_1}\}^* w_1 \tau_{i_1} \{\tau_{i_2}\}^* w_2 \tau_{i_2} \ldots  \{\tau_{i_m}\}^* w_m \tau_{i_m} \{\tau_{0}\}^*
\]
and $\mu^{-1} (x^b e_{\rho})$ contains exactly $n$ $\tau$-letters. 
\end{cor}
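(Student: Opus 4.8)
\textbf{Proof plan for \Cref{cor:insertion}.}
The statement is the special case of \Cref{prop:inverse of monomial} obtained by taking $d = m$ in the sense that the free generator lives in width $m$, but more precisely it is the ``no new $\xi$-letters'' part of that proposition together with an explicit description of which $\tau$-strings may be inserted. So the plan is to \emph{derive} it directly from \Cref{prop:inverse of monomial} rather than reprove anything. First I would observe that a monomial $x^b e_\rho \in \bF_n$ equals $\eps(x^a e_\pi)$ for some $\eps \in \Hom_{\OI}([m],[n])$ if and only if $x^b e_\rho \in \Mon(\la x^a e_\pi\ra)$ \emph{and} $x^b e_\rho$ has the same $x$-exponent ``profile'' as $\eps(x^a e_\pi)$ with no extra variable powers, i.e. $x^b = \eps_*(x^a)$ exactly (not merely $\eps_*(x^a) \mid x^b$). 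By \eqref{eq:def-divisible}, membership in $\la x^a e_\pi\ra$ already forces $\rho = \eps\circ\pi$ and $a_i \le b_{\eps(i)}$ for each $i$; the condition $x^b = \eps_*(x^a)$ is precisely the requirement that equality holds in each of these inequalities and that $b_j = 0$ for $j \notin \im\eps$.

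Next I would translate the set $\mu^{-1}(\Mon(\la\mu(w)\ra))$ from \Cref{prop:inverse of monomial}, namely
\[
\overline{\cL_{i_1}}\,\cN_{w_1}\tau_{i_1}\,\overline{\cL_{i_2}}\,\cN_{w_2}\tau_{i_2}\cdots\overline{\cL_{i_m}}\,\cN_{w_m}\tau_{i_m}\,\overline{\cL_0}\ \cap\ \Ls,
\]
into the language appearing in the corollary. The key point is that $\cN_{w_j}$ is the set of simple words $v$ with $\eta_1(w_j)\mid\eta_1(v)$, and in the proof of \Cref{prop:inverse of monomial} the monomial $x^v$ inserted in front of $\tilde w_k$ records exactly the ``extra'' exponent vector $b - \eps_*(a)$. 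Demanding $x^b = \eps_*(x^a)$ therefore forces every such insertion to be trivial, which replaces each $\cN_{w_j}$ by the singleton $\{w_j\}$. Similarly $\overline{\cL_{i_k}} = \{\{\xi_1,\dots,\xi_c\}^*\tau_{i_k}\}^*$ gets cut down: within $\Ls$ and with no $\xi$-insertions allowed, only the all-$\tau_{i_k}$ words survive, i.e. $\overline{\cL_{i_k}}$ is replaced by $\{\tau_{i_k}\}^*$, and likewise $\overline{\cL_0}$ by $\{\tau_0\}^*$. Putting these replacements together yields exactly
\[
\{\tau_{i_1}\}^* w_1 \tau_{i_1} \{\tau_{i_2}\}^* w_2 \tau_{i_2} \cdots \{\tau_{i_m}\}^* w_m \tau_{i_m} \{\tau_0\}^*,
\]
and the requirement that $x^b e_\rho$ have width $n$ is equivalent to $\mu^{-1}(x^b e_\rho)$ containing exactly $n$ $\tau$-letters, by the bijection $\mu_n\colon \Lm{n}\to\Mon(\bF_n)$ of \Cref{prop:bijection}.

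For the converse direction, given a word $w'$ in the displayed language with exactly $n$ $\tau$-letters, I would read off $\eps$ as in Step I of the proof of \Cref{prop:inverse of monomial}: each maximal block $\tau_{i_k}^{q_k-1}\tau_{i_k}$ (with $q_k\ge 1$ the number of $\tau$-letters in that block, plus a trailing $\tau_0^{q_0}$) determines $\eps(k) = q_1 + \cdots + q_k$, and the computation there shows $\eta(w') = \eps_*(x^a)\,e_{\eps\circ\pi}$ with no extra $x$-factors precisely because we inserted no $\xi$-letters. Hence $\mu(w') = \eps(x^a e_\pi)$, as needed. The main obstacle — really the only place needing care — is bookkeeping the correspondence ``inserted $\xi$-block $\leftrightarrow$ extra exponent vector $b-\eps_*(a)$'' cleanly enough to justify that forbidding all $\xi$-insertions is exactly the same as imposing $x^b = \eps_*(x^a)$; everything else is a direct specialization of \Cref{prop:inverse of monomial} and \Cref{prop:bijection}. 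I would therefore phrase the proof as: apply \Cref{prop:inverse of monomial}, note $x^b e_\rho = \eps(x^a e_\pi)$ iff $x^b e_\rho \in \la x^a e_\pi\ra$ with equality of $x$-exponents, intersect the language accordingly, and invoke \Cref{prop:bijection} for the width-versus-$\tau$-count equivalence.
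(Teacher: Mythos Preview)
Your proposal is correct and follows essentially the same route as the paper: derive the corollary from \Cref{prop:inverse of monomial} by imposing the additional constraint that no extra $\xi$-letters be inserted, which is exactly the condition $x^b = \eps_*(x^a)$. The paper compresses this entire argument into one sentence---``the words $\mu^{-1}(x^a e_\pi)$ and $\mu^{-1}(\eps_*(x^a e_\pi))$ contain the same number of $\xi$-letters''---whereas you unpack the consequences for each piece of the language (reducing $\cN_{w_j}$ to $\{w_j\}$ and $\overline{\cL_{i_k}}$ to $\{\tau_{i_k}\}^*$) and handle the converse explicitly; but the underlying idea is identical.
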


\begin{proof}
This follows from \Cref{prop:inverse of monomial}  because the words $\mu^{-1} (x^a e_{\pi})$ and $\mu^{-1} ( \eps_* (x^a e_{\pi}))$ contain the same number of $\xi$-letters. 
\end{proof}

\begin{rem}
    \label{rem:insertion}
\Cref{prop:inverse of monomial} shows for monomials $x^a e_{\pi}$ and $x^b e_{\rho}$ with $x^b e_{\rho} \in \langle x^a e_{\pi} \rangle$ that the word $v = \mu^{-1} (x^b e_{\rho})$ can be obtained from $w = \mu^{-1} (x^a e_{\pi} )$ by inserting letters suitably. In fact, since $x^b e_{\rho} = x^p \eps_* (x^a e_{\pi})$ by assumption, one can obtain $v$ from $w$ by inserting first only $\tau$-letters  as described in \Cref{cor:insertion} and then inserting only $\xi$-letters suitably. 
\end{rem}

We now want to show that the language described in \Cref{prop:inverse of monomial} is regular. 
Recall that the class of \emph{regular languages} on $\Sigma$ is the smallest class of languages that contains the languages having a letter of $\Sigma$ or the empty word as their only  word and that is closed under taking unions, concatenation and passing from a language $\cN$ to its Kleene star $\cN^*$. The class of regular languages is also closed under intersections and taking complements (see \cite[Section 4.2]{HU}). 

\begin{prop}
     \label{prop:regular language}
For any monomial $x^u e_{\pi} \in \Fb$, the set $\mu^{-1} (\Mon(\langle x^u e_{\pi} \rangle))$ is a regular language on $\Sigma$. 
\end{prop}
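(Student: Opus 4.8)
The plan is to read off an explicit description of the language $\mu^{-1}(\Mon(\langle x^u e_\pi\rangle))$ from \Cref{prop:inverse of monomial} and then verify that every building block appearing in that description is regular, so that the whole set is regular because the class of regular languages is closed under concatenation and intersection. Writing $w = \mu^{-1}(x^u e_\pi) = w_1 \tau_{i_1} w_2 \tau_{i_2} \ldots w_m \tau_{i_m} \in \Lm{m}$, \Cref{prop:inverse of monomial} gives
\[
\mu^{-1}(\Mon(\langle x^u e_\pi\rangle)) = \overline{\cL_{i_1}} \cN_{w_1} \tau_{i_1} \overline{\cL_{i_2}} \cN_{w_2} \tau_{i_2} \cdots \overline{\cL_{i_m}} \cN_{w_m} \tau_{i_m} \overline{\cL_0} \cap \Ls .
\]
So it suffices to show that each $\overline{\cL_j}$, each $\cN_{w_k}$, the singleton languages $\{\tau_{i_k}\}$, and the language $\Ls$ of standard words in $\cL$ are all regular; the displayed language is then a finite concatenation of regular languages intersected with a regular language, hence regular.

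First I would handle the easy factors. Each $\{\tau_{i_k}\}$ is a single-letter language, hence regular by definition. For $\overline{\cL_j} = \{\{\xi_1,\ldots,\xi_c\}^* \tau_j\}^*$: the inner language $\{\xi_1,\ldots,\xi_c\}^*$ is the Kleene star of the finite (hence regular) union $\{\xi_1\} \cup \cdots \cup \{\xi_c\}$, concatenating with $\{\tau_j\}$ keeps it regular, and one more Kleene star shows $\overline{\cL_j}$ is regular. The same reasoning shows $\cL_j = \{\xi_1,\ldots,\xi_c,\tau_j\}^*$ is regular, and then $\cL = \cL_1 \tau_1 \cL_2 \tau_2 \cdots \cL_d \tau_d \overline{\cL_0}$ is a finite concatenation of regular languages, hence regular. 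The set $\Sigma^*_{\rm std}$ of standard words is regular because it is defined by a local (two-letter sliding-window) condition — forbidding the finitely many substrings $\xi_i \xi_j$ with $i > j$ — so its complement in $\Sigma^*$ is $\Sigma^* (\bigcup_{i>j} \xi_i \xi_j) \Sigma^*$, visibly regular, and regular languages are closed under complement; hence $\Ls = \cL \cap \Sigma^*_{\rm std}$ is regular.

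The one factor requiring a small argument is $\cN_{w_k}$, the set of simple words $v$ (i.e. $v \in \{\xi_1,\ldots,\xi_c\}^*$) such that the monomial $\eta_1(v) \in \bP_1$ is divisible by $\eta_1(w_k)$. If $w_k$ contributes the exponent vector $a = (a_1,\ldots,a_c)$, so $\eta_1(w_k) = x_{1,1}^{a_1}\cdots x_{c,1}^{a_c}$, then a simple word $v = \xi_{j_1}\cdots\xi_{j_\ell}$ lies in $\cN_{w_k}$ precisely when, for each $i \in [c]$, the letter $\xi_i$ occurs at least $a_i$ times in $v$. This is a ``has at least $a_i$ copies of letter $\xi_i$, for all $i$'' condition, which is a finite intersection over $i$ of conditions each of which is regular: the set of simple words with at least $a_i$ occurrences of $\xi_i$ is
\[
\{\xi_1,\ldots,\xi_c\}^* \, \xi_i \, \{\xi_1,\ldots,\xi_c\}^* \, \xi_i \, \cdots \, \xi_i \, \{\xi_1,\ldots,\xi_c\}^*
\]
with $\xi_i$ appearing $a_i$ times, a finite concatenation of regular languages. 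Intersecting these $c$ regular languages (and with $\{\xi_1,\ldots,\xi_c\}^*$ to enforce simplicity) shows $\cN_{w_k}$ is regular. Assembling all of this: the language in \Cref{prop:inverse of monomial} is a finite concatenation of the regular languages $\overline{\cL_{i_k}}$, $\cN_{w_k}$, $\{\tau_{i_k}\}$ and $\overline{\cL_0}$, intersected with the regular language $\Ls$, hence regular, which proves the proposition.

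\textbf{Expected main obstacle.} There is no deep obstacle here — the substance was already done in \Cref{prop:inverse of monomial}, which translated the (a priori infinite, combinatorially complex) set of $\OI$-divisors of a monomial into an explicit concatenation-and-intersection expression in the $\overline{\cL_j}$'s and $\cN_{w_k}$'s. The only point needing care is the verification that each $\cN_{w_k}$ is regular, i.e. recognizing the ``at-least-$a_i$-copies of each letter'' condition as a finite Boolean combination of manifestly regular languages; one must also be a little careful to invoke closure of regular languages under intersection and complement (not just union, concatenation, and Kleene star) as recorded from \cite[Section 4.2]{HU}.
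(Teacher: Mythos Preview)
Your proof is correct and follows essentially the same approach as the paper's: invoke \Cref{prop:inverse of monomial} to express the preimage as a concatenation $\cP$ intersected with $\Ls$, then verify each building block is regular. The only minor differences are that the paper shows $\Ls$ regular by decomposing it directly as $(\cL_1 \cap \Sigma^*_{\rm std})\tau_1 \cdots \tau_d(\overline{\cL_0} \cap \Sigma^*_{\rm std})$ using the explicit form $\cQ(\tau_i\cQ)^*$ for the standard pieces, whereas you show $\Sigma^*_{\rm std}$ itself is regular via complement and then intersect; and you supply the explicit ``at least $a_i$ copies'' argument for $\cN_{w_k}$ that the paper leaves to the reader.
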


\begin{proof}
Using \Cref{prop:bijection} and \Cref{prop:inverse of monomial} and their notation  we know that there is a word $w = w_1 \tau_{i_1} w_2 \tau_{i_2} \ldots w_m \tau_{i_m} \in \Ls$ such that 
\begin{align*}
\mu^{-1} (\Mon (\la x^u e_{\pi}  \ra) ) = \overline{\cL_{i_1}}  \cN_{w_1} \tau_{i_1} \overline{\cL_{i_2}} \cN_{w_2} \tau_{i_2} \ldots \overline{\cL_{i_m}}  \cN_{w_m} \tau_{i_m }\overline{\cL_0}  \cap \Ls. 
\end{align*}
Note that the language $\cQ$ of standard simple words is $\cQ = \{\xi_1\}^* \{\xi_2\}^* \ldots \{\xi_c\}^*$.  Thus, it is regular. For any $i \in [c]$, the identity $\cL_i \cap \Sigma^*_{\rm std} = \cQ (\tau_i \cQ)^*$ shows that $\cL_i \cap \Sigma^*_{\rm std}$ also is a regular language. Similarly, $\overline{\cL_{0}} \cap \Sigma^*_{\rm std}= (\cQ \tau_0)^*$ is regular. Recall that $\cL = \cL_1 \tau_1 \cL_2 \tau_2 \ldots \cL_d \tau_d \overline{\cL_0}$. It follows 
\[
\Ls = \cL \cap \Sigma^*_{\rm std} = (\cL_1 \cap \Sigma^*_{\rm std}) \tau_1 (\cL_2 \cap \Sigma^*_{\rm std})  \tau_2 \ldots (\cL_d \cap \Sigma^*_{\rm std}) \tau_d (\overline{\cL_0} \cap \Sigma^*_{\rm std}). 
\]
Therefore $\Ls$ is a regular language. 

It is not too difficult to check that each of the languages $\cN_{w_i}$ is regular. Hence, $\cP = \overline{\cL_{i_1}}  \cN_{w_1} \tau_{i_1} \overline{\cL_{i_2}} \cN_{w_2} \tau_{i_2} \ldots \overline{\cL_{i_m}}  \cN_{w_m} \tau_{i_m }\overline{\cL_0}$ is a regular language, and so is
$\mu^{-1} (\Mon (\la x^u e_{\pi}  \ra) )  = \cP \cap \Ls$. 
\end{proof}

\begin{cor}
     \label{cor:reg language} 
If $x^{u_1}e_{\pi_1},\ldots,x^{u_s}e_{\pi_s}$ is any finite set of monomials in $\Fb$, then the language \\
 $\mu^{-1} (\Mon (\langle x^{u_1}e_{\pi_1},\ldots,x^{u_s}e_{\pi_s} \rangle))$ is regular. 
\end{cor}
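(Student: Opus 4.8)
The plan is to reduce the claim to \Cref{prop:regular language} via the union formula \eqref{eq:union monomials} together with the fact that $\mu$ is a bijection. First I would recall that, by \eqref{eq:union monomials},
\[
\Mon \bigl(\la x^{u_1}e_{\pi_1},\ldots,x^{u_s}e_{\pi_s} \ra\bigr) = \bigcup_{i=1}^s \Mon \bigl(\la x^{u_i} e_{\pi_i} \ra\bigr).
\]
Since $\mu \colon \Ls \to \Mon (\Fb)$ is a bijection (\Cref{prop:bijection}), taking preimages commutes with unions, so
\[
\mu^{-1}\bigl(\Mon (\la x^{u_1}e_{\pi_1},\ldots,x^{u_s}e_{\pi_s} \ra)\bigr) = \bigcup_{i=1}^s \mu^{-1}\bigl(\Mon (\la x^{u_i} e_{\pi_i} \ra)\bigr).
\]

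Next I would invoke \Cref{prop:regular language}, which asserts that each individual language $\mu^{-1}\bigl(\Mon (\la x^{u_i} e_{\pi_i} \ra)\bigr)$ is regular on $\Sigma$. The class of regular languages on a fixed alphabet is closed under finite unions (this is part of the definition recalled before \Cref{prop:regular language}), so the displayed finite union is again a regular language. This completes the argument.

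There is essentially no obstacle here: the corollary is a formal consequence of \Cref{prop:regular language}, the bijectivity of $\mu$, and the closure of regular languages under finite union. The only point worth stating explicitly is that $\mu^{-1}$ distributes over unions, which is immediate from $\mu$ being a bijection (indeed a function suffices). One could alternatively phrase the whole proof in a single sentence, but spelling out the union step makes the dependence on \eqref{eq:union monomials} transparent.
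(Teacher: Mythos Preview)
Your proposal is correct and follows exactly the paper's approach: the paper's proof consists of the single line ``Use Identity \eqref{eq:union monomials},'' which is precisely the decomposition into a finite union of the languages $\mu^{-1}(\Mon(\la x^{u_i} e_{\pi_i}\ra))$ handled by \Cref{prop:regular language}. Your only addition is making explicit that preimages commute with unions and that regular languages are closed under finite union, both of which the paper leaves implicit.
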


\begin{proof}
Use Indentity \eqref{eq:union monomials}. 
\end{proof}

In order to relate this result to Hilbert series we assume now that $K$ is any field. 

\begin{thm}
     \label{thm:rational hilb monomial submod}
The equivariant Hilbert series of any monomial submodule of $\bF = \Fo{d}$ over $\Pb = (\XO{1})^{\otimes c}$ is a rational function. 
\end{thm}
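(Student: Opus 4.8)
The plan is to reduce the statement to \Cref{cor:reg language}, which asserts that the word-language attached to a finitely generated monomial submodule of $\bF = \Fo{d}$ is regular, and then to invoke the classical rationality of the (suitably weighted) generating function of a regular language. First I would pass to the finitely generated case: since $\bF = \Fo{d}$ is a finitely generated $\OI$-module over the noetherian algebra $\Pb = (\XO{1})^{\otimes c}$, it is noetherian, so an arbitrary monomial submodule $\M \subseteq \bF$ is finitely generated; a standard directed-union argument (the submodules generated by the finite subsets of a set of monomial generators of $\M$ form a chain, which must stabilize by noetherianity) then shows that $\M$ is generated by finitely many monomials $x^{u_1} e_{\pi_1}, \ldots, x^{u_s} e_{\pi_s}$. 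By \eqref{eq:union monomials} we get $\Mon(\M) = \bigcup_{i=1}^{s} \Mon(\la x^{u_i} e_{\pi_i} \ra)$, so the language $L := \mu^{-1}(\Mon(\M)) \subseteq \Ls$ is regular by \Cref{cor:reg language}.

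Next I would translate the equivariant Hilbert series into a bivariate generating function over $L$. Because $\M$ is a monomial submodule, for all integers $n \ge 0$ and $j$ the monomials of width $n$ and internal degree $j$ lying in $\M$ form a $K$-basis of $[\M_n]_j$; hence $\dim_K [\M_n]_j$ is exactly the number of such monomials. Under the bijection $\mu \colon \Ls \to \Mon(\bF)$ of \Cref{prop:bijection}, a word $w \in \Ls$ and its image $\mu(w)$ satisfy: the width of $\mu(w)$ equals the number $n_\tau(w)$ of $\tau$-letters in $w$ (each $\tau_i$ contributes one unit of width), and the internal degree of $\mu(w)$ equals the number $n_\xi(w)$ of $\xi$-letters in $w$ (each $\xi_i$ contributes the degree-one factor $x_{i,1}$, while the shifting operators $T_0,\ldots,T_d$ preserve the degree of the monomial component). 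Summing over $L$ therefore yields
\[
H_{\M}(s,t) = \sum_{n \ge 0,\, j \in \Z} \dim_K [\M_n]_j\, s^n t^j = \sum_{w \in L} s^{\,n_\tau(w)}\, t^{\,n_\xi(w)}.
\]

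Finally I would invoke the transfer-matrix description of regular languages. Extend the assignment $\xi_i \mapsto t$ and $\tau_i \mapsto s$ multiplicatively to a monoid homomorphism $\operatorname{wt} \colon \Sigma^* \to \Z[s,t]$, so that $\operatorname{wt}(w) = s^{\,n_\tau(w)} t^{\,n_\xi(w)}$ and $H_{\M}(s,t) = \sum_{w \in L} \operatorname{wt}(w)$. Choosing a deterministic finite automaton $(Q, q_0, F, \delta)$ recognizing $L$ and forming the matrix $A \in \Z[s,t]^{Q \times Q}$ with $A_{pq} = \sum_{\alpha \in \Sigma,\ \delta(p,\alpha) = q} \operatorname{wt}(\alpha)$, one observes that every entry of $A$ vanishes at $(s,t) = (0,0)$; hence $\det(I - A)$ is a polynomial with constant term $1$, and $(I - A)^{-1} = \sum_{k \ge 0} A^k$ is a well-defined matrix of rational functions (equivalently, of formal power series). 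Then
\[
H_{\M}(s,t) = \sum_{w \in L} \operatorname{wt}(w) = \sum_{q \in F} \bigl( (I - A)^{-1} \bigr)_{q_0,\, q},
\]
which is a rational function in $s$ and $t$. Alternatively one may simply cite the rationality of generating functions of regular languages.

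All of the substantive work — the regularity of $L$ — has already been carried out in \Cref{prop:regular language} and \Cref{cor:reg language}, so I do not anticipate a single decisive obstacle. The two points that deserve care are: (i) confirming that, under $\mu$, the statistics $n_\tau$ and $n_\xi$ faithfully record width and internal degree, which is immediate from the explicit formula for $\mu^{-1}(x^u e_\pi)$ in \Cref{prop:bijection}; and (ii) the bookkeeping showing that $\sum_{w \in L} \operatorname{wt}(w)$ is a well-defined rational function, which is routine once one notes that the transfer matrix $A$ has no constant term.
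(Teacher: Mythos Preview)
Your proposal is correct and follows essentially the same route as the paper: pass to finitely many monomial generators, apply \Cref{cor:reg language} to obtain a regular language $L$, weight each $\xi_i$ by $t$ and each $\tau_j$ by $s$, and identify $H_{\M}(s,t)$ with the weighted generating function of $L$, which is rational. The only differences are cosmetic---you spell out the transfer-matrix computation where the paper simply cites the standard rationality result for regular languages, and your phrase ``form a chain'' for the finite subsets is a small slip (an ascending chain of initial segments, or a direct appeal to noetherianity of $\bF$, does the job).
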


\begin{proof}
Let $\M$ be a monomial $\OI$-submodule of $\Fb$. By \cite[Theorem 6.15]{NR2}, $\M$ is finitely generated. In particular, there is a finite set of monomials that generates $\M$. Hence \Cref{cor:reg language} gives that $\cN = \mu^{-1} (\Mon (\M))$ is a regular language on $\Sigma$. 

Consider a polynomial ring $T = K[s, t]$ in two variables. Define  a monoid homomorphism 
\begin{align}
    \label{eq:weight function}
\rho \colon \Sigma^{\star} \rightarrow \Mon (T) & \text{ by } \rho(\xi_i) = t \text{ and } \rho (\tau_j) = s  
\end{align}
for any $i \in [c]$ and $j \in\{0,\ldots,d\}$. The generating function of $\cN$ with respect to $\rho$ is a formal power series 
\[
P_{\cN, \rho} (s, t) = \sum_{w \in \cN} \rho (w) =  \sum_{n \ge 0} \, \sum_{w \in \cN_n} \rho (w).  
\]
Since $\cN$ is a regular language a standard result gives that $P_{\cN, \rho} (s, t)$ is a rational function (see, e.g., \cite{H} or \cite[Theorem 4.7.2]{St}).

By definition of $\rho$ one has, for any $w \in \Sigma^*$, that $\rho (w) = s^n t^j$ if $n$ is the number of 
$\tau$-letters occurring in $w$ and $j$ is the number of $\xi$-letters in $w$. 
Since $\M$ is generated by monomials, for any integers $n, j$, the $K$-vector space $[\M_n]_j$ has a basis consisting of all degree $j$ monomials in $\M_n$. Hence \Cref{prop:bijection} shows that $\dim_K [\M_n]_j$  is equal to the number of words $w \in \cN_n$ with $\rho (w) = s^n t^j$. 
It follows for the equivariant Hilbert series of $\M$, 
\[
H_{\M} (s, t) = \sum_{n \ge 0, j \in \Z} \dim_K [\M_n]_j s^n t^j = \sum_{n \ge 0} \, \sum_{w \in \cN_n} \rho (w) = P_{\cN, \rho} (s, t). 
\]
We are done since $P_{\cN, \rho} (s, t)$ is a rational function. 
\end{proof}


\section{Denominators of Hilbert series} 
\label{sec:denominator} 

The main result of the previous section shows that the Hilbert series of a monomial submodule of $\Fo{d}$ is rational. The goal of this section is to derive information on the irreducible factors of the denominator polynomial of such a Hilbert series when it is in reduced form. In particular, it turns out that these factors have at most degree one as polynomials in $s$. This fact has important consequences. However, the proof is much more complicated than the argument in the previous section. A reader willing to accept the main result of this section may skip its other parts. 

We continue to use the previously introduced notation. In particular, we consider monomial submodules of graded free $\OI$-modules $\Fo{d}$ over $\Pb = (\XO{1})^{\otimes c}$, where $c$ is a fixed positive integer and the generator of $\Fo{d}$ has degree zero. We will use induction on $d \ge 0$. If $d = 0$ then most of the desired result has been established in \cite{NR}. The arguments developed in that paper will be of  importance here as well. In addition, we need a decomposition result for certain $\OI$-modules. We begin by establishing this decomposition. 

Throughout this section we assume that $K$ is an arbitrary field. 
Note that any monomial submodule $\M$ of $\bF = \Fo{d}$ has a unique minimal generating set consisting of monomials only. Its elements are called the minimal monomial generators of $\M$. We say that $\M$ is \emph{generated in width $m$} if the width of any minimal monomial generator of $\M$ is at most $m$. 
For any finitely generated monomial module $T$ over 
 some ring $\Pb_n$, we denote by $e^+ (T)$ the largest degree of a 
minimal monomial generator of $T$ if $T \neq 0$.  If $T = 0$, we define $e^+ (T) = - \infty$

Fix some $c$-tuple $e = e_1,\ldots,e_c) \in \N_0^c$ and recall that $x_{\lpnt, 1}^e = x_{1,1}^{e_1} \cdots x_{c,1}^{e_c}$. 
For every integer $n \ge d$, the $\Pb_n$-module $\M_n : x_{\lpnt, 1}^e$ decomposes as 
\[
\M_n : x_{\lpnt, 1}^e = \oplus_{\pi} I_{\pi} e_{\pi},
\]
where the sum is taken over all $\pi \in \Hom_{\FIO} ([d], [n])$ and every $I_{\pi}$ is a monomial 
ideal of $\Pb_n$. 
Using these coefficient ideals we get  
\[
e^+ (\M_n : x_{\lpnt, 1}^e) = \max \{e^+ (I_{\pi}) \; \mid \; \pi \in \Hom_{\FIO} ([d], [n]) \}. 
\]
Finally, let $\x_1 \Pb_n$ denote the ideal of $\Pb_n$ generated by the variables $x_{1,1},\ldots,x_{c,1}$. If the ambient ring $\Pb_n$ is understood from context we often simply write $\x_1$.  
Using that $\Pb_{n-1}$ is isomorphic to $\Pb_n/\x_1 \Pb_n$ as a graded $K$-algebra, we will consider $\Fb_n/(\M_n : x_{\lpnt, 1}^e  + \x_1 \Fb_n)$ as a graded $\Pb_{n-1}$-module. 
We are ready to state the announced decomposition result. 

\begin{prop}
     \label{prop:decomposition} 
Let $\M$  be a monomial submodule of $\bF = \Fo{d}$, where $d\ge 0$. Assume that the width of any monomial minimal generator of $\M$ is at most $m$. 
Set $\Gb = \Fo{d-1}$ if $d \ge 1$ and $\Gb = 0$ if $d = 0$. 
Consider any $e = (e_1,\ldots,e_c) \in \N_0^c$. There are monomial submodules $\Qb'$ of $\Gb$ and $\Qb''$ of $\Fb$ generated in width $m-1$ and $m$, respectively, with $\Qb''_{m-1} = 0$ and the following two properties: 
\begin{itemize}
\item[(a)] For every integer $n \ge m+1$, there are isomorphisms of graded $\Pb_{n-1}$-modules
\begin{align}
     \label{eq:decomp}
\Fb_n/(\M_n : x_{\lpnt, 1}^e  + \x_1 \Fb_n) \cong \Gb_{n-1}/\Qb'_{n-1} \oplus \Fb_{n-1}/\Qb''_{n-1}. 
\end{align}

\item[(b)] 
\[
\sum_{k=0}^{e^+ (\M_m)} \dim_K [\Fb_m/\M_m ]_k \ge \sum_{k=0}^{e^+ (\Qb''_m)} \dim_K [\Fb_m/\Qb''_m]_k, 
\]
and equality is true if and only if $\M_n = \Qb''_n$ for every $n \ge m$. 
\end{itemize}
\end{prop}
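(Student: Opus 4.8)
The plan is to construct the submodules $\Qb'$ and $\Qb''$ explicitly by analyzing the monomials of $\M$ according to the exponent of $x_{\lpnt,1}$ and whether the first element of the order-preserving map lies at position $1$. Concretely, for a minimal monomial generator $x^u e_\pi$ of $\M$ of width $\le m$, distinguish two cases. If $\pi(1) = 1$ (so the generator ``uses'' the first column as part of its support data), then after passing to the quotient by $x_{\lpnt,1}^e$ and then modding out $\x_1$, the surviving information is a generator of width $m-1$ in a free module with one fewer basis symbol — this contributes to $\Qb'$ over $\Gb = \Fo{d-1}$. If $\pi(1) > 1$, the first column is inert and can be deleted, shifting everything down by one; this produces a width-$(m-1)$ generator in $\Fb_{n-1}$, contributing to $\Qb''$. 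The condition $\Qb''_{m-1} = 0$ is arranged because generators of $\M$ have width at most $m$, so after the shift-down the images have width at most $m-1$ but are only forced to appear starting in width $m-1$ — one must check the degenerate cases carefully so that nothing lands in width $m-1$ prematurely. I would phrase the construction using the word/language bijection $\mu$ from \Cref{prop:bijection}: deleting column $1$ corresponds to a concrete operation on the first block $w_1 \tau_{i_1}$ of the standard word, and the decomposition \eqref{eq:decomp} becomes an assertion about how $\mu^{-1}(\Mon(\M))$ decomposes, which is cleaner to verify via \Cref{prop:inverse of monomial}.

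**Part (a).** For the isomorphism \eqref{eq:decomp}, the key point is a direct-sum decomposition of $\Fb_n$ itself. Writing $\Hom_{\FIO}([d],[n])$ as the disjoint union of those $\pi$ with $\pi(1) = 1$ and those with $\pi(1) \ge 2$, we get $\Fb_n = \bigl(\oplus_{\pi(1)=1}\Pb_n e_\pi\bigr) \oplus \bigl(\oplus_{\pi(1)\ge 2}\Pb_n e_\pi\bigr)$. Modding out by $\x_1 \Fb_n$ replaces $\Pb_n$ by $\Pb_{n-1} \cong \Pb_n/\x_1$ in each summand; the first family of $e_\pi$'s is then indexed by $\Hom_{\FIO}([d-1],[n-1])$ (delete the value $1$ and shift), giving $\Gb_{n-1}$, while the second family is indexed by $\Hom_{\FIO}([d],[n-1])$ (shift down by one), giving $\Fb_{n-1}$. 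It remains to identify the image of $\M_n : x_{\lpnt,1}^e$ under this identification, for which I would check that colon-ideal-by-$x_{\lpnt,1}^e$ and then reduction mod $\x_1$ commutes suitably with the two projections, and that the resulting submodules of $\Gb_{n-1}$ and $\Fb_{n-1}$ are independent of $n$ (for $n \ge m+1$) and generated in the claimed widths — this stabilization for $n \ge m+1$ is exactly where the hypothesis ``width $\le m$'' is used, since a generator of width $m$ influences the colon module only up to width $m$, so for $n \ge m+1$ the pattern has settled.

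**Part (b).** The inequality should follow from (a) together with the fact that $\Qb''$ is built from $\M$ by an operation (delete an inert first column, possibly after taking a colon) that can only lose monomials in low degrees: intuitively $\Fb_m/\Qb''_m$ is a ``quotient-like'' degeneration of $\Fb_m/\M_m$, so its partial Hilbert sum up to the relevant generator-degree bound is no larger. I would make this precise by exhibiting, for each $n$, an injection (or a surjection in the other direction of the quotients) in the appropriate degree range, using that $e^+(\Qb''_m) \le e^+(\M_m)$ because deleting a column does not raise generator degrees. For the equality clause, the point is that equality of the truncated Hilbert sums forces $\M_m = \Qb''_m$ (no monomials were actually lost), and by the $\OI$-module structure — every $\M_n$ for $n \ge m$ is generated by $\M_m$ via \eqref{eq:def-divisible}, since $\M$ is generated in width $\le m$ — this propagates to $\M_n = \Qb''_n$ for all $n \ge m$; conversely that equality obviously gives equality of the sums.

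**Main obstacle.** I expect the hard part to be Part (b) and, within it, pinning down precisely the degree range and the monomial bookkeeping that guarantees the inequality together with its sharp equality criterion — in particular making sure that ``deleting an inert first column'' does not secretly create new monomials in $\Fb_m/\Qb''_m$ in degrees $\le e^+(\M_m)$ that were absent in $\Fb_m/\M_m$, and controlling how the colon operation by $x_{\lpnt,1}^e$ interacts with this. The cleanest route is probably to carry everything through the regular-language model: express $\Mon(\M)$, $\Mon(\Qb')$, and $\Mon(\Qb'')$ as explicit sublanguages of $\Ls$ via $\mu$, verify \eqref{eq:decomp} as a bijection of graded pieces of these languages, and then read off (b) by comparing word counts degree by degree, where the equality case corresponds to one language being literally contained in the image of the other.
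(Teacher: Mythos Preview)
Your high-level strategy matches the paper's: split $\Hom_{\FIO}([d],[n])$ according to whether $\pi(1)=1$ or $\pi(1)\ge 2$, build a ``restriction'' map that deletes the first column, and use the word description from \Cref{prop:bijection} and \Cref{prop:inverse of monomial} to keep track of everything. But your construction of $\Qb''$ has a real gap that your own confused remark about $\Qb''_{m-1}=0$ already signals.

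You propose to build $\Qb''$ from the width-$m$ generators of $\M$ with $\pi(1)>1$, deleting the inert first column; that puts the generators of $\Qb''$ in width $m-1$, contradicting $\Qb''_{m-1}=0$. The paper instead defines $\Qb''$ as the $\OI$-submodule of $\Fb$ generated in width $m$ by $\Res$ applied to the monomials of $M''_{m+1}:x_{\lpnt,1}^e$ (not of $M''_m$), where $M''_{m+1}$ denotes the summand of $\M_{m+1}:x_{\lpnt,1}^e$ indexed by $\pi$ with $\pi(1)\ge 2$. This shift from $m$ to $m+1$ is not cosmetic: a minimal generator $x^a e_\pi\in\M_m$ with $\pi(1)=1$ contributes nothing to your $\Qb''$, but in the paper's construction its $\OI$-image under the shift $[m]\hookrightarrow[m+1]$, $i\mapsto i+1$, lies in $M''_{m+1}$ and $\Res$ sends it back to $x^a e_\pi$. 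This is precisely how the paper obtains the key inclusion $\M_m\subset\Qb''_m$, which is what drives Part~(b). With your definition that inclusion fails, and so does the inequality.

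Relatedly, your intuition for Part~(b) is pointed the wrong way. You write that $\Qb''$ is obtained from $\M$ by an operation that ``can only lose monomials,'' and then worry that the deletion might ``create new monomials in $\Fb_m/\Qb''_m$.'' The correct picture is the opposite: one needs $\Qb''_m\supseteq\M_m$, so that $\Fb_m/\Qb''_m$ has \emph{no more} monomials than $\Fb_m/\M_m$ in each degree. The paper gets this from the $m+1$ trick above, together with the easy bound $e^+(\Qb''_m)\le e^+(\M_{m+1})\le e^+(\M_m)$ (generation in width $\le m$). The equality clause then follows because both $\M$ and $\Qb''$ are generated in width exactly $m$ with $\Qb''_{m-1}=0$, so $\M_m=\Qb''_m$ forces $\M_n=\Qb''_n$ for all $n\ge m$. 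Once you adopt the $m+1$ definition of $\Qb''$, the verification of \eqref{eq:submodules1} and \eqref{eq:submodules2} via word insertions proceeds essentially as you sketch; the paper's Case~2 in the proof of \eqref{eq:submodules2} (where $\sigma(1)=1$) is exactly the place where passing through width $m+1$ is indispensable.
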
 

The argument below is constructive. It describes how the modules $\Qb'$ and $\Qb''$ are obtained from $\M : x_{\lpnt, 1}^e$. 

\begin{rem} 
(i)
If $e = 0$, i.e., $x_{\lpnt, 1}^e  = 1$, then $\M_n : x_{\lpnt, 1}^e  = \M_n$. In general, it is   \emph{not} true that  the $\OI$-modules $\Fb/(\M + \x_1 \Fb)$ and  
$(\Gb/\Qb')[-1] \oplus (\Fb/\Qb'')[-1]$ are isomorphic, where $(\Fb/\Qb'')[-1]$ denotes the module 
$\Fb/\Qb''$, but with a shift in width. In fact, the $\OI$-module $\M+\x_1 \Fb$ contains monomials of the form $x_{i,2} e_{\pi}$, whereas  $\M_n + \x_1 \Fb_n$ may not contain any such monomial for any $n$. 
However,  \Cref{prop:decomposition}(a) does give a width-wise decomposition as modules over noetherian polynomial rings. 

(ii) If $e \neq 0$, the $\Pb_n$-modules $\M_n : x_{\lpnt, 1}^e$ do not necessarily form the 
width-wise components of any $\OI$-module. Consider for example the case where $c=1$, 
$d = 2$ and $\M$ is the submodule of $\Fo{2}$ generated in width two by the monomial 
$x_1 x_2 (1,2)$. Then $\M_3 : x_1$ is generated as $K[x_1, x_2, x_3]$-module by 
$x_2 (1,2), \ x_3 (1,3), \ x_2 x_3 (2, 3)$. 
The $K[x_1, x_2]$-module $\M_2 : x_1$ is generated by $x_2 (1, 2)$. The $\OI$-module generated by $x_2 (1,2)$ contains in width three $x_3 (2, 3)$. 
But this monomial is not in $\M_3 : x_1$. 

(iii) The $\OI$-modules $\Qb'$ and $\Qb''$ in Part (a) of \Cref{prop:decomposition} depend on the choice of $e \in \N_0^c$. Note that in the inequality in Part (b), the left-hand side is independent of $e$. 
\end{rem}

\begin{proof}[Proof of \Cref{prop:decomposition}] 
Assume first $d \ge 1$. 
We proceed in several steps. First, we define a map $\Res$ on monomials that induces the desired decompositions. Then $\Res$ is used to define the modules $\Qb'$ and $\Qb''$. The bulk of the 
argument and its most technical part is to compare the width $n$ components of these modules via the map $\Res$ with suitable submodules of $\M_n : x_{\lpnt, 1}^e  + \x_1 \Fb_n$ (see Identities \eqref{eq:submodules1}  and \eqref{eq:submodules2}).  Claim (a) then follows. Finally, a further analysis of $\Qb''$ gives Claim (b). 

For any monomial submodule $\Nb$ of a free $\OI$-module,  recall that  $\Mon (\Nb)$ denotes the set of monomials in $\Nb$. We define a map 
\[
\Res\colon \Mon (\Fb) \setminus \bigcup_{n \ge d}  \x_1 \Fb_n \to \Mon (\Fb [-1]) \cup \Mon (\Gb [-1]) 
\]
by considering two cases for a monomial $x^a e_{\pi} \in \Mon (\Fb_n) \setminus \x_1 \Fb_n$ of width $n$. 

Case 1: Assume $\pi (1) \ge 2$. Then define $\Res (x^a e_{\pi})$ as the monomial $x^b e_{\rho}$, 
where $\rho \colon [d] \to [n-1]$ it the map with $\rho (k) = \pi (k) -1$ and
$x^b$ is the monomial obtained from $x^a$ by replacing every variable $x_{i, j}$ dividing $x^a$ by $x_{i, j-1}$. This is well-defined as $x^a$ is not divisible by any variable $x_{i, 1}$ because $x^a \notin \x_1 \Fb_n$ by assumption. Note that $\Res (x^a e_{\pi}) \in \Fb_{n-1}$.

Case 2: Assume $\pi (1) = 1$. Then define $\Res (x^a e_{\pi})$ as the monomial $x^b e_{\rho}$, 
where $\rho \colon [d-1] \to [n-1]$ it the map with $\rho (k) = \pi (k+1) -1$ and
$x^b$ is, as in Case 1,  the monomial obtained from $x^a$ by replacing every variable $x_{i, j}$ dividing $x^a$ by $x_{i, j-1}$. Thus, $\Res (x^a e_{\pi}) \in \Gb_{n-1}$. 

Observe that in both cases $\Res$ maps a monomial of width $n$ onto a monomial of width $n-1$. 

By construction, the map $\Res$ is injective. In fact, $\Res$ is a bijective map because, for every choice of integers $n, j$, there are  as many monomials of degree $j$ in $\bF_n \setminus \x_1 \bF_n$ as there are in $\Fb_{n-1} \oplus \Gb_{n-1}$

For the arguments below it is instructive to describe the map $\Res$ using the bijection between 
$\Mon (\Fb)$ and the regular language $\Ls$ described in \Cref{prop:bijection}. In order to keep track of $d$, let us denote this bijection by $\mu_d$ here. Consider the word $w = \mu_d^{-1} (x^a e_{\pi}) \in \Lm{n}$. Since $x^a \notin \x_1 \Fb_n$, its left-most letter must be 
$\tau_1$. Delete this letter and denote the resulting word by  $\tilde{w} \in \Sigma^*_{n-1}$. Note 
that the condition $\pi (1) \ge 2$ means precisely that the letter $\tau_1$ occurs at least twice in 
$w$. Hence in Case 1 the word $\tilde{w}$ is a standard word in $\Ls$ and $\Res (x^a) = \mu_d (\tilde{w}) \in \Fb_{n-1}$. 
In Case 2, the word $\tilde{w}$ is not in $\cL$ because the letter $\tau_1$ does not occur in it. Let $w'$ be the word obtained from  $\tilde{w}$ replacing every letter $\tau_i \neq \tau_0$  by $\tau_{i-1}$. Thus, $w'$ is a standard  word in the regular language corresponding to $\Mon (\Gb)$ and $\Res (x^a e_{\pi}) = \mu_{d-1} (w') \in \Gb_{n-1}$. 
\smallskip

Now we define the modules $\Qb'$ and $\Qb''$. For every integer $n \ge d$, we write $\Hom_{\FIO} ([d], [n])$ as the disjoint union 
\[
\Hom_{\FIO} ([d], [n]) = H'_n \cup H''_n 
\]
with 
\[
H'_n = \{\pi \in \Hom_{\FIO} ([d], [n]) \; \mid \; \pi (1) = 1\} \, \text{ and } \; H''_n = \{\pi \in \Hom_{\FIO} ([d], [n]) \; \mid \; \pi (1) \ge 2\}. 
\]
This induces a  decomposition of  $\Fb_n$ as $\Pb_n$-module: 
\[
\Fb_n = F'_n \oplus F''_n,  
\]
where 
\[ 
F'_n = \oplus_{\pi \in H'_n} \Pb_n e_{\pi} \, \text{ and } \;  F''_n = \oplus_{\pi \in H''_n} \Pb_n e_{\pi}. 
\]
Notice that there is no non-trivial decomposition of $\Fb$ as $\OI$-module because $\Fb$ is 
generated by one element. Observe that the set $\{\Res (e_{\pi}) \; \mid \; \pi \in H'_n\}$ generates the $\Pb_{n-1}$-module 
$\Gb_{n-1}$ and that $\{\Res (e_{\pi}) \; \mid \; \pi \in H''_n\}$ generates $\Fb_{n-1}$. 

For every $n \ge d$, there is an analogous decomposition of $\M_n : x_{\lpnt, 1}^e = \oplus_{\pi} I_{\pi} e_{\pi}$ as 
\[
\M_n : x_{\lpnt, 1}^e= (M'_n : x_{\lpnt, 1}^e) \oplus (M''_n : x_{\lpnt, 1}^e),  
\]
where 
\[ 
M'_n : x_{\lpnt, 1}^e = \oplus_{\pi \in H'_n}  I_{\pi}  e_{\pi} \, \text{ and } \;  
M''_n : x_{\lpnt, 1}^e = \oplus_{\pi \in H''_n} I_{\pi}  e_{\pi}. 
\]
Thus, $M'_n : x_{\lpnt, 1}^e = (\M_n : x_{\lpnt, 1}^e) \cap F'_n$ and 
$M''_n : x_{\lpnt, 1}^e = (\M_n : x_{\lpnt, 1}^e) \cap F''_n$. 

Consider now
\begin{align}
     \label{eq:decomp M}
\M_n : x_{\lpnt, 1}^e + \x_1 \Fb_n = \oplus_{\pi \in H'_n \cup H''_n} (I_{\pi} + \x_1 \Pb_n) e_{\pi}. 
\end{align}
Every coefficient ideal can be uniquely rewritten as $I_{\pi} + \x_1 \Pb_n = J_{\pi} + \x_1 \Pb_n$, 
where $J_{\pi}$ is a monomial ideal of $\Pb_n$ with the property that none of its minimal generators  
is divisible by any of the variables $x_{1, 1},\ldots,x_{c, 1}$. 

Define $\Qb'$ as the submodule of $\Gb$ generated by 
$\{\Res (x^a e_{\pi}) \; \mid \; x^a e_{\pi} \in M'_m : x_{\lpnt, 1}^e \setminus \x_1 F'_m\} \subset \Gb_{m-1}$ and $\Qb''$ as the submodule of $\Fb$ generated by 
$\{\Res (x^a e_{\pi}) \; \mid \; x^a e_{\pi}  \in M''_{m+1} : x_{\lpnt, 1}^e \setminus \x_1 F''_{m+1} \} \subset \Fb_{m}$. 
Thus, the $\OI$-module $\Qb'$ is generated in width $m-1$, whereas $\Qb''$ is generated in width $m$. 
\smallskip 

Using that the map $\Res$ is bijective, for each integer $n \ge d$, we write $\langle \Res^{-1} (\Qb'_{n-1})\rangle_{\Pb_n}$ for the submodule of $F'_n$ that is generated by the monomials $\Res^{-1} (x^a e_{\rho})$, where $x^a e_{\rho}$ is a monomial in $\Qb'_{n-1}$.
Similarly, we denote by $\langle \Res^{-1} (\Qb''_{n-1})\rangle_{\Pb_n}$  the submodule of $F''_n$ that is generated by the monomials $\Res^{-1} (x^a e_{\rho})$, where $x^a e_{\rho}$ is a monomial in $\Qb''_{n-1}$ 
It follows that none of the monomial minimal generators of $\langle \Res^{-1} (\Qb'_{n-1})\rangle_{\Pb_n}$ and $\langle \Res^{-1} (\Qb''_{n-1})\rangle_{\Pb_n}$ is divisible by any of the variables $x_{1, 1},\ldots,x_{c, 1}$. 

Our next goal is to establish the following equalities:  
\begin{align}
     \label{eq:submodules1} 
     M'_n : x_{\lpnt, 1}^e + \x_1 F'_n  & = \langle \Res^{-1} (\Qb'_{n-1}) \rangle_{\Pb_n}  + \x_1 F'_n \quad \text{ whenever } n \ge m,  \quad  \text{ and} \\ 
      \label{eq:submodules2} 
     M''_n : x_{\lpnt, 1}^e + \x_1 F''_n&  =  \langle \Res^{-1} (\Qb''_{n-1}) \rangle_{\Pb_n} + \x_1 F''_n \quad \text{ whenever }  n \ge m +1. 
\end{align}

To this end we use the fact that every module generated by monomials has a unique minimal 
generating set consisting of monomials only. Thus, it is enough to compare monomial minimal generators in order to show the above identities. 
\smallskip 

We begin by establishing  \eqref{eq:submodules1}. Let $x^a e_{\pi}$ be a minimal generator of 
$\langle \Res^{-1} (\Qb'_{n-1})\rangle_{\Pb_n}$ that is not in $\x_1 F'_n$, where $n \ge m$. This  
means that  the standard word $\mu_d^{-1} (x^a e_{\pi})$ is of the form $\tau_1 w$, where $w$ 
does not contain $\tau_1$,  and that $\Res (x^a e_{\pi})$ is a minimal generator of $\Qb'_{n-1}$. 
Thus, $x^a e_{\pi}$ is in $M'_n : x_{\lpnt, 1}^e$, which gives $x_{\lpnt, 1}^e x^a e_{\pi} \in M'_n$. 
The latter monomial corresponds to a word of the form $w_1 \tau_1 w$, where $w_1$ is a standard simple word. In particular, $w_1$ consists only of $\xi$-letters. 
Note that 
$\Res (x^a e_{\pi})  = \mu_{d-1} (\tilde{w})$, 
where $\tilde{w}$ is the word obtained from $w$ by replacing each letter $\tau_i \neq \tau_0$ in $w$ by $\tau_{i-1}$. 
Since $\Qb'$ is generated in width $m-1$, we get $\Res (x^a e_{\pi}) = x^p \eps(\Res (x^b e_{\sigma}))$ for some monomials $x^p \in \Pb_{n-1}$ and $x^b e_{\sigma} \in M'_m : x_{\lpnt, 1}^e \setminus \x_1 F'_m$ and some $\eps \in \Hom_{\OI} ([m-1], [n-1])$. In fact, $x^p$ must be $1$ as $x^a e_{\pi}$ was chosen as a minimal generator. 
By \Cref{cor:insertion}, this shows that $\tilde{w}$ can be obtained from $\tilde{v} = \mu_{d-1}^{-1} (\Res (x^b e_{\sigma}))$ by inserting suitably $\tau$-letters. Furthermore, applying $\Res^{-1}$ it 
follows that $x^b e_{\sigma}$ 
corresponds to the standard word $\tau_1 v$, where $v$ is obtained from $\tilde{v}$ be renaming 
each letter $\tau_i \neq \tau_0$ in $\tilde{v}$ by $\tau_{i+1}$. Since $\tilde{w}$ can be obtained from $\tilde{v}$ 
by inserting suitably $\tau$-letters analogous insertions transform $\tau_1 v$ to $\tau_1 w$ as well as $w_1 \tau_1 v$ to $w_1 \tau_1 w$. Hence 
\Cref{cor:insertion} gives that $x_{\lpnt, 1}^e x^a e_{\pi}$ is in $\langle x_{\lpnt, 1}^e x^b e_{\sigma} \rangle$. 
We conclude that $x_{\lpnt, 1}^e x^a e_{\pi}$ is in $\M_n$ because $x_{\lpnt, 1}^e x^b e_{\sigma} \in M'_m \subset \M_m$. 
By the choice of $x^a e_{\pi}$, this monomial is in $F'_n$, which implies $x_{\lpnt, 1}^e x^a e_{\pi} \in M'_n$, and so $x^a e_{\pi} \in M'_n  : x_{\lpnt, 1}^e$. Thus, we have shown $M'_n  : x_{\lpnt, 1}^e + \x_1 F'_n   \supset \langle \Res^{-1} (\Qb'_{n-1})\rangle_{\Pb_n}  + \x_1 F'_n$. 

In order to prove the reverse inclusion consider a minimal generator $x^a e_{\pi}$ of 
$M'_n : x_{\lpnt, 1}^e$ that is not in $\x_1 F'_n$. Thus, there is a minimum degree divisor 
$x_{\lpnt, 1}^{e'}$ of $x_{\lpnt, 1}^e$ such that $x_{\lpnt, 1}^{e'} x^a e_{\pi}$ is a minimal generator 
of $M'_n$. 
In particular, we must have $\pi (1) = 1$. Since $x^a e_{\pi}$ is not in $\x_1 F'_n$, the standard 
word $\mu_d^{-1} (x^a e_{\pi})$ is of the 
form $\tau_1 w$, where $w$ does not contain the letter $\tau_1$. Moreover, $x_{\lpnt, 1}^{e'} x^a e_{\pi}$ corresponds to a standard word of the form $w_1 \tau_1 w$, where $w_1$ is a simple word. 
Using that $\M$ is generated in width 
$m$ by assumption and that $x_{\lpnt, 1}^{e'} x^a e_{\pi}$ is a minimal generator of $M'_n$, we get 
$x_{\lpnt, 1}^{e'} x^a e_{\pi} = \eps (x^{b'} e_{\sigma})$ for some monomial $x^{b'} e_{\sigma} \in \M_m$ and some 
$\eps \in \Hom_{\OI} ([m], [n])$. Observing that $1 = \pi (1) = \eps (\sigma (1))$ we conclude that 
$\sigma (1) = \eps (1) = 1$. It follows that $x^{b'} = x_{\lpnt, 1}^{e'} x^b$, where $x^b e_{\sigma}$ is a monomial that is not in  $\x_1 F'_m$. 
Hence $\mu_d^{-1} (x^b e_{\sigma}) = \tau_1 v$, where the word $v$ does not 
contain the letter $\tau_1$, and $x^b e_{\sigma} \in M'_m : x_{\lpnt, 1}^e$. 
By definition of $\Qb'$, the monomial $\Res (x^b e_{\sigma})$ is in $\Qb'_{m-1}$. 
Furthermore, it corresponds to the standard word $\tilde{v}$ that is obtained from $v$ by replacing 
each letter $\tau_i \neq \tau_0$ by $\tau_{i-1}$. Carrying out these replacements on the $\tau$-letters in $w$ transforms $w$ to a  standard 
word $\tilde{w}$ corresponding to 
$\Res (x^a e_{\pi})$. Since $x^a e_{\pi} = \eps (x^b e_{\sigma})$,  
\Cref{cor:insertion} shows that $\tau_1 w$ can be obtained from $\tau_1 v$ by inserting suitably 
letters drawn from $\{\tau_2,\ldots,\tau_d, \tau_0\}$. Analogous insertions transform $\tilde{v}$ to 
$\tilde{w}$. Using again \Cref{cor:insertion}, we conclude that $\Res (x^a e_{\pi})$ is in 
$\langle \Res (x^b e_{\sigma}) \rangle$. Thus, $\Res (x^b e_{\sigma}) \in \Qb'_{m-1}$ implies 
$\Res (x^a e_{\pi}) \in \Qb'_{n-1}$, which shows 
$x^a e_{\pi} \in \langle \Res^{-1} (\Qb'_{n-1}) \rangle_{\Pb_n}$. This completes the proof of Equality \eqref{eq:submodules1}. 
\smallskip 

The arguments for Identity \eqref{eq:submodules2} are similar, but require an extra step. 
Let $x^a e_{\pi}$ be a minimal generator of 
$\langle \Res^{-1} (\Qb''_{n-1})\rangle_{\Pb_n}$ that is not in $\x_1 F''_n$, where $n \ge m+1$. Thus,  the standard word $\mu_d^{-1} (x^a e_{\pi})$ is of the form $\tau_1 w$, where $w$ 
contains $\tau_1$ at least once and corresponds to  $\Res (x^a e_{\pi}) \in \Qb''_{n-1}$.  
Since $\Qb''$ is generated in width $m$ and $x^a e_{\pi}$ was chosen as a minimal generator, we  
get $\Res (x^a e_{\pi}) = \eps(\Res (x^b e_{\sigma}))$ for some monomial 
$x^b e_{\sigma} \in M''_{m+1} : x_{\lpnt, 1}^e \setminus \x_1 F'_{m+1}$ and some $\eps \in \Hom_{\OI} ([m+1], [n])$. 
Using \Cref{cor:insertion} we conclude as above that $x_{\lpnt, 1}^e x^a e_{\pi}$ is in $M''_n$, which proves 
$M''_n : x_{\lpnt, 1}^e + \x_1 F''_n   \supset  \langle \Res^{-1} (\Qb''_{n-1}) \rangle_{\Pb_n} + \x_1 F''_n$. 

In order to show the reverse inclusion consider a minimal generator $x^a e_{\pi}$ of 
$M''_n : x_{\lpnt, 1}^e$ that is 
not in $\x_1 F''_n$. Thus, $\pi (1) \ge 2$ and the standard word $\mu_d^{-1} (x^a e_{\pi})$ is of the 
form $\tau_1 w$, where $w$ 
contains $\tau_1$ at least once and corresponds to  $\Res (x^a e_{\pi})$. 
Let $x_{\lpnt, 1}^{e'}$ be a minimum degree divisor  of $x_{\lpnt, 1}^e$ such that 
$x_{\lpnt, 1}^{e'} x^a e_{\pi}$ is a minimal generator  of $M''_n$. 
As above, we get 
$x_{\lpnt, 1}^{e'}  x^a e_{\pi} = \eps (x^{b'} e_{\sigma})$ for some monomial 
$x^{b'} e_{\sigma} \in \M_m$ and some $\eps \in \Hom_{\OI} ([m], [n])$. 
Any variable $x_{i, 1}$ dividing $\eps (x^{b'})$ must arise as $\eps (x_{i. 1})$. It follows that $x^{b'}$ 
factors as $x^{b'} = x_{\lpnt, 1}^{e'}  x^b$ and that we must have $\eps (1) = 1$ if 
$x_{\lpnt, 1}^{e'} \neq 1$. We conclude that $x^b e_{\sigma} \in \M_m : x_{\lpnt, 1}^{e}$ and
 $x^a e_{\pi} = \eps (x^b e_{\sigma})$. 
We consider two cases. 

\emph{Case 1.} Assume $x^b e_{\sigma} \in \x_1 \Fb_m$. Thus, the left-most letter of 
$v = \mu_d^{-1} (x^b e_{\sigma})$ is a $\xi$-letter. Furthermore, we must have 
$x_{\lpnt, 1}^{e'} =1$. Indeed, we have seen that the alternative forces $\eps (1) = 1$, which implies 
$x^a e_{\pi} = \eps (x^b e_{\sigma}) \in \x_1 \Fb_m$, a contradiction to the choice of $x^a e_{\pi}$.  
Note that $x_{\lpnt, 1}^{e'} = 1$ yields $x^b e_{\sigma} \in \M_m$. 
Since $x^a e_{\pi} = \eps (x^b e_{\sigma})$ 
\Cref{cor:insertion} gives that $\tau_1 w$ can be obtained from $v$ by inserting suitably 
$\tau$-letters. As $v$ begins with a $\xi$-letter, $\tau_1 w$ can also be obtained from $\tau_1 v$ by suitable insertions. The 
same insertions transform $v$ into $w$, which implies 
$\Res (x^a e_{\pi})\in \langle \Res (\mu_d(\tau_1 v)) \rangle$ by \Cref{cor:insertion}. Note that 
$\mu_d (\tau_1 v)$ is in $\M_{m+1}$ because $x^b e_{\sigma} \in \M_m$. Thus, $\mu_d (\tau_1 v)$ is in $M''_{m+1} \subset M''_{m+1} : x_{\lpnt, 1}^{e}$ because $v$ contains the letter $\tau_1$. It follows that 
$\Res (\mu_d(\tau_1 v))$ is in $\Qb''_{m}$, and so $\Res (x^a e_{\pi})$ is in $\Qb''_{n-1}$, which 
shows $x^a e_{\pi} \in \langle \Res^{-1} (\Qb''_{n-1}) \rangle_{\Pb_n}$. 

\emph{Case 2.} Assume $x^b e_{\sigma} \notin \x_1 \Fb_m$. Thus, $\mu_d^{-1} (x^b e_{\sigma})$ 
is of the form  $\tau_1 v$. Moreover, there is a simple word $w_1$ such that 
$x_{\lpnt, 1}^{e'} x^a e_{\pi}$ corresponds to $w_1 \tau_1 w$ and $w_1 \tau_1 v$ corresponds to 
$x_{\lpnt, 1}^{e'} x^b e_{\sigma}$. Using that  $x^a e_{\pi} = \eps (x^b e_{\sigma})$ and 
\Cref{cor:insertion}, we conclude that $\tau_1 w$ can be obtained from $\tau_1 v$ by suitable 
insertions. 
We consider two situations. First, 
assume $\sigma (1) = 1$, that is, $v$ does not contain $\tau_1$. Since $\tau_1 w$ can be obtained from $\tau_1 v$ by suitable insertions and $w$ contains $\tau_1$, we can insert into $v$ the letter $\tau_1$ to obtain $v'$ such that suitable insertions transform $\tau_1 v'$ to $\tau_1 w$. The same insertions transform $v'$ into $w$ and $w_1 \tau_1 v'$ to $w_1 \tau_1 w$. 
Since $v'$ contains $\tau_1$ we get $\mu_d (w_1 \tau_1 v') \in M''_{m+1}$. Hence $\mu_d (\tau_1 v')$ is in $M''_{m+1} : x_{\lpnt, 1}^{e}$, and so $\Res (\mu_d(\tau_1 v'))$ is in $\Qb''_{m}$. 
Now we conclude as in Case 1 that $x^a e_{\pi} \in \langle \Res^{-1} (\Qb''_{n-1}) \rangle_{\Pb_n}$. 
(Note that this step used the assumption  $n \ge m+1$ as $x^b e_{\sigma} = \mu_d (\tau_1 v)$ is in $M'_{m}$, but $\mu_d (\tau_1 v')$ is in $M''_{m+1}$.) 

Second, 
assume $\sigma (1) \ge 2$, that is, $v$  contains the letter  $\tau_1$. Let $v'$ be a standard word obtained from $v$ by inserting one $\tau$-letter in such a way that $\tau_1 v'$ can be transformed to $\tau_1 w$ by further suitable insertions. Since $\mu_d (w_1 \tau_1 v')$ is in $M''_{m+1}$ the above arguments give again $x^a e_{\pi} \in \langle \Res^{-1} (\Qb''_{n-1}) \rangle_{\Pb_n}$, as desired. This completes the proof of Identity \eqref{eq:submodules2}. 
\smallskip 

Note that by construction  $\langle \Res^{-1} (\Qb'_{n-1}) \rangle_{\Pb_n}$ is a module whose monomial minimal generators are not in $\x_1 F'_n$. Consider Equation \eqref{eq:decomp M} and the definition of the ideals $J_{\pi}$ below it.  Comparing with Identities \eqref{eq:submodules1} and \eqref{eq:submodules2}, it follows that 
\[
\Res^{-1} (\Qb'_{n-1}) = \oplus_{\pi \in H'_n} J_{\pi} e_{\pi} \quad \text{ and }  \quad 
\Res^{-1} (\Qb''_{n-1}) = \oplus_{\pi \in H''_n} J_{\pi} e_{\pi}. 
\]
Thus, $\Qb'_{n-1}$ is generated by the monomial $\Res (x^a e_{\pi})$, where $x^a$ is a minimal generator of $J_{\pi}$  and $\pi$ is in $H'_n$. Furthermore, there is an isomorphism 
\[
F'_n/(\langle \Res^{-1} (\Qb'_{n-1}) \rangle_{\Pb_n}  + \x_1 F'_n) \cong  \oplus_{\pi \in H'_n} \big ( \Pb_n/  (J_{\pi} + \x_1 \Pb_n) \big ) e_{\pi}. 
\]
Denote by $\Res (J_{\pi})$ the monomial ideal of $\Pb_{n-1}$ that is generated by the coefficients of $\Res (x^a e_{\pi})$, where $x^a$ is a minimal generator of $J_{\pi}$. Then there is an isomorphism of $\Pb_{n-1}$-modules 
\[
\big ( \Pb_n/(J_{\pi} + \x_1 \Pb_n) \big )  e_{\pi} \cong \big ( \Pb_{n-1}/\Res (J_{\pi}) \big ) \Res (e_{\pi}).
\] 
Taking also into account that $\{\Res (e_{\pi}) \; \mid \; \pi \in H'_n\}$ generates the $\Pb_{n-1}$-module $\Gb_{n-1}$, the map $\Res$ induces  an isomorphism of $\Pb_{n-1}$-modules 
\[
 F'_n/(\langle \Res^{-1} (\Qb'_{n-1}) \rangle_{\Pb_n}  + \x_1 F'_n) \cong \Gb_{n-1}/\Qb'_{n-1}.  
\]
Combined with Identity \eqref{eq:submodules1}, this gives 
\[
F'_n/(M'_n + \x_1 F'_n)   \cong \Gb_{n-1}/\Qb'_{n-1}. 
\]
Similarly, we obtain an isomorphism of $\Pb_{n-1}$-modules 
\[
F''_n/(M''_n + \x_1 F''_n)   \cong \Fb_{n-1}/\Qb''_{n-1}.  
\]
Since $\Fb_n/(\Fb_n + \x_1 \Fb_n) \cong \big [ F'_n/(M'_n + \x_1 F'_n) \big ]\oplus \big [ F''_n/(M''_n + \x_1 F''_n) \big ]$, Assertion (a) follows. 
\smallskip 

It remains to establish Assertion (b). To this end we claim that $\M_m  \subset \Qb''_m$. Indeed, consider any monomial $x^a e_{\pi}$ in $\M_m$ and its corresponding word $w = \mu_d^{-1} (x^a e_{\pi})$. Then $\mu_d (\tau_1 w)$ is a monomial in $M''_{m+1} \setminus \x_1 F''_{m+1}$. Hence $\Res (\mu_d (\tau_1 w)) = \mu_d (w) = x^a e_{\pi}$ is in $\Qb''_m$. Since $\M_m$ is a monomial module, this implies $\M_m \subset \Qb''_m$.  It follows that 
\begin{align}
    \label{eq:inequality} 
\sum_{k=0}^{e^+ (\M_m)} \dim_K [\Fb_m/\M_m]_k \ge \sum_{k=0}^{e^+ (\M_m)} \dim_K [\Fb_m/\Qb''_m]_k.  
\end{align}
Now we use the fact that $\Qb''_m$ is generated by the monomials $\Res (x^b e_{\rho})$, where $x^b e_{\rho}$ is a minimal generator of $M''_{m+1} : x_{\lpnt, 1}^e$ that is not in $\x_1 F''_{m+1}$. Since $\Res (x^b e_{\rho})$ and $x^b e_{\rho}$ have the same degree this implies 
\[
e^+ (\Qb''_m) \le e^+ (M''_{m+1} : x_{\lpnt, 1}^e) \le  e^+ (M''_{m+1}) \le e^+ (\M_{m+1}). 
\]
By assumption $\M$ is generated in width $m$, which gives $e^+ (\M_{m+1}) \le e^+ (\M_{m})$. Together with the previous estimate this shows that  $e^+ (\M_m) \ge e^+ (\Qb''_m)$. Combined with Inequality \eqref{eq:inequality}, we conclude that 
\[
\sum_{k=0}^{e^+ (\M_m)} \dim_K [\Fb_m/\M_m]_k \ge \sum_{k=0}^{e^+ (\Qb''_m)} \dim_K [\Fb_m/\Qb''_m]_k.  
\]
and that equality is true if and only if $\M_m = \Qb''_m$. Since $\M$ and $\Qb''$ are both submodules of $\Fb$ generated in width $m$, the latter is equivalent to $\M_n = \Qb''_n$ for every $n \ge m$. 
\smallskip

Second, assume $d = 0$.  The argument is analogous, but simpler. For example,  in this case 
$\Res\colon \Mon (\Fb) \setminus \bigcup_{n \ge d}  \x_1 \Fb_n \to \Mon (\Fb [-1])$  is given by  mapping $x^a e_{\pi}$ with $\pi \colon \emptyset \to [n]$ onto $x^b e_{\rho}$,  where $\rho \colon \emptyset \to [n-1]$ and  $x^b$ is obtained from $x^a$ by replacing every variable $x_{i, j}$ dividing $x^a$ by $x_{i, j-1}$.
We leave the details to the reader. 
\end{proof}

It will be useful to consider the following invariants of a monomial  $\OI$-submodule. 

\begin{defn}
   \label{def:size}
Let $\M$  be a monomial submodule of $\bF = \Fo{d}$. Denote by $\wi^+ (\M)$  the maximal width  of a monomial minimal generator of $\M$   if $\M \neq 0$. If $\M$ is trivial  
we define $\wi^+ (\M) = - \infty$. Thus, $\wi^+ (\M)$ is the least integer $n$ such that $\M \neq 0$ is generated by elements whose widths are at most $n$. 

Furthermore, if $\M \neq 0$ set 
\[
\si (\M) = \sum_{j = 0}^{e^+ (\M_{wi^+ (\M)})} \dim_K [\Fb_{wi^+ (\M)}/\M_{wi^+ (\M)}]_j. 
\]
We define $\si (\M) = \infty$ if $\M$ is trivial. 
\end{defn}

Intuitively, we think of $\si (\M)$ as a measure for the size of $\M$ relative to $\Fb$. 

\begin{rem}
     \label{rem:size}
Using \Cref{prop:decomposition} and assuming additionally $m = \wi^+ (\M)$, its Part (b) gives 
\[
\si (\M) \ge \si (\Qb'')
\]
because $\Qb''$ is generated in width $m$ and $\Qb''_{m-1} = 0$, which implies $\wi^+ (\Qb'') = m$. 
\end{rem}

One can refine the inequality in \Cref{rem:size}. For $e \in \N_0^c$, write the decomposition in \Cref{prop:decomposition}(a)  as 
\[
\Fb_n/(\M_n : x_{\lpnt, 1}^e  + \x_1 \Fb_n) \cong \Gb_{n-1}/\Qb'(e)_{n-1} \oplus \Fb_{n-1}/\Qb''(e)_{n-1}, 
\]
where $\Qb'(e)$ is a monomial submodule of $\Gb = \Fo{d-1}$ and $\Qb''(e)$ is a monomial 
submodule of $\Fb$. Here we abuse notation to keep track of the exponent of $x_{\lpnt, 1}^e$ on the left-hand side and to avoid superscripts. (Later we will write $\M(s)$ with $s \in \Z$ to indicate that $\M$ is considered with a grading shifted by $s$ (see above \Cref{thm:shape Hilb f.g. OI-module}). This should not cause confusion here.) 

\begin{cor}
     \label{cor:size comparison} 
Adopt the above notation and assumptions of    \Cref{prop:decomposition}. Consider $e, \tilde{e} \in \N_0^c$ with $e \le \tilde{e}$, componentwise. Then one has 
\[
\si (\Qb''(e)) \ge \si (\Qb''(\tilde{e})). 
\]
\end{cor}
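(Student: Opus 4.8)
The plan is to exploit the fact that passing from $e$ to $\tilde e$ only enlarges the colon submodule: since $x_{\lpnt,1}^e$ divides $x_{\lpnt,1}^{\tilde e}$, for every $n\ge d$ we have $\M_n : x_{\lpnt,1}^e \subseteq \M_n : x_{\lpnt,1}^{\tilde e}$, and hence
\[
\M_n : x_{\lpnt,1}^e + \x_1 \Fb_n \;\subseteq\; \M_n : x_{\lpnt,1}^{\tilde e} + \x_1 \Fb_n .
\]
Taking $n = m+1$ and tracing through the construction of $\Qb''$ in the proof of \Cref{prop:decomposition} — where $\Qb''(e)$ is generated in width $m$ by the monomials $\Res(x^b e_\rho)$ with $x^b e_\rho$ a minimal generator of $M''_{m+1} : x_{\lpnt,1}^e$ that is not in $\x_1 F''_{m+1}$ — I would like to conclude that $\Qb''(e)_m \subseteq \Qb''(\tilde e)_m$, and more generally $\Qb''(e)_n \subseteq \Qb''(\tilde e)_n$ for all $n$. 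The inclusion $\Res^{-1}(\Qb''(e)_{n-1}) + \x_1 F''_n = M''_n : x_{\lpnt,1}^e + \x_1 F''_n \subseteq M''_n : x_{\lpnt,1}^{\tilde e} + \x_1 F''_n = \Res^{-1}(\Qb''(\tilde e)_{n-1}) + \x_1 F''_n$ from Identity \eqref{eq:submodules2}, combined with the fact that neither $\langle\Res^{-1}(\Qb''(e)_{n-1})\rangle$ nor $\langle\Res^{-1}(\Qb''(\tilde e)_{n-1})\rangle$ has minimal generators divisible by the variables $x_{i,1}$, forces $\Res^{-1}(\Qb''(e)_{n-1}) \subseteq \Res^{-1}(\Qb''(\tilde e)_{n-1})$, and applying the bijection $\Res$ gives $\Qb''(e)_{n-1}\subseteq\Qb''(\tilde e)_{n-1}$.

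Granting the containment $\Qb''(e) \subseteq \Qb''(\tilde e)$ of monomial submodules of $\Fb$, both generated in width $m = \wi^+(\M)$ with vanishing $(m-1)$-component, I would finish by a direct comparison of the defining sums in \Cref{def:size}. First, $\Qb''(e)_m \subseteq \Qb''(\tilde e)_m$ gives $\dim_K[\Fb_m/\Qb''(e)_m]_k \ge \dim_K[\Fb_m/\Qb''(\tilde e)_m]_k$ for every $k$. Second, as in the end of the proof of \Cref{prop:decomposition}(b), the containment yields $e^+(\Qb''(\tilde e)_m) \le e^+(\Qb''(e)_m)$, so the range of summation for $\si(\Qb''(\tilde e))$ is no longer than that for $\si(\Qb''(e))$; since all summands are non-negative, truncating a sum of non-negative terms to a shorter initial range only decreases it, and replacing each term by a smaller one decreases it further. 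Combining the two effects gives $\si(\Qb''(e)) \ge \si(\Qb''(\tilde e))$, as claimed. (The degenerate cases where one of the modules is trivial are handled by the convention $\si(0)=\infty$ and $e^+(0)=-\infty$.)

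The main obstacle I anticipate is justifying the monomial-submodule inclusion $\Qb''(e)\subseteq\Qb''(\tilde e)$ cleanly. The subtlety is that the generators of $\Qb''(e)$ are not simply the generators of $\M$ colonized; they are images under $\Res$ of \emph{minimal} generators of $M''_{m+1}:x_{\lpnt,1}^e$ lying outside $\x_1 F''_{m+1}$, and minimal generating sets do not behave monotonically under inclusion of modules. The right way to sidestep this is to argue at the level of \emph{all} monomials rather than minimal generators: show $\Mon(\Qb''(e)) \subseteq \Mon(\Qb''(\tilde e))$ by using that $\Res$ is a bijection on monomials (not in $\x_1\Fb$) intertwining $\OI$-divisibility with the ambient divisibility structure, together with the monomial-level inclusion $\Mon(M''_{n}:x_{\lpnt,1}^e) \subseteq \Mon(M''_n:x_{\lpnt,1}^{\tilde e})$ and the characterization in \eqref{eq:def-divisible}–\eqref{eq:union monomials}. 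Once the inclusion is phrased in terms of monomial sets, \eqref{eq:submodules2} and the bijectivity of $\Res$ make it essentially formal, and the numerical conclusion follows as above.
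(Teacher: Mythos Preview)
Your approach is correct and essentially matches the paper's own proof. The paper argues in three sentences: from $e\le\tilde e$ one gets $\M:x_{\lpnt,1}^e\subseteq\M:x_{\lpnt,1}^{\tilde e}$, which implies both $\Qb''(e)\subseteq\Qb''(\tilde e)$ and $e^+(\Qb''(\tilde e)_m)\le e^+(\Qb''(e)_m)$, and since $\wi^+(\Qb''(e))=\wi^+(\Qb''(\tilde e))=m$ the inequality for $\si$ follows---exactly your two-step numerical comparison. Your extended discussion of the containment (working at the level of monomial sets via the bijection $\Res$ and Identity~\eqref{eq:submodules2}, rather than minimal generators) is more careful than the paper's one-clause justification and addresses a genuine subtlety, but the logical skeleton is identical.
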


\begin{proof}
The assumption $e \le \tilde{e}$ yields $\M : x_{\lpnt, 1}^e \subset \M : x_{\lpnt, 1}^{\tilde{e}}$, which implies $\Qb''(e) \subset \Qb''(\tilde{e})$ and $e^+(\Qb''(\tilde{e})_m) \le e^+ (\Qb''(e)_m)$. The latter inequality is true because both modules are generated by monomials.  Since $m = \wi^+ (\Qb''(e)) = \wi^+ (\Qb''(\tilde{e})$ the assertion follows. 
\end{proof}

As further preparation, we extend some of the methods developed in \cite{NR}. The following observation is similar to \cite[Lemma 6.8]{NR}. 

\begin{lem}
       \label{lem:repeated division}
Fix $n \in \N$ and  let $T$ be a graded submodule of a finitely generated graded free $\Pb_n$-module $F$, and let $\ell \in \Pb_n$ be a linear form such that $T : \ell^r = T : \ell^{r+1}$ for some integer $r \ge 0$. Then one has for the Hilbert series of $F/T$: 
\[
H_{F/T} (t) = \sum_{e = 0}^{r-1} H_{F/(T: \ell^e + \ell F)} (t) \cdot t^e
+ H_{F/( T : \ell^r + \ell F)} (t) \cdot \frac{t^r}{1-t} .
\]
\end{lem}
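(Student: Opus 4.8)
The plan is to exploit the short exact sequences relating multiplication by $\ell$ to the ideal quotients $T : \ell^e$, together with the stabilization hypothesis $T : \ell^r = T : \ell^{r+1}$, and to sum the resulting Hilbert series identities. First I would set up, for each integer $e \ge 0$, the exact sequence of graded $\Pb_n$-modules
\[
0 \longrightarrow F/(T : \ell^{e+1})\,(-1) \overset{\cdot \ell}{\longrightarrow} F/(T : \ell^e) \longrightarrow F/(T : \ell^e + \ell F) \longrightarrow 0,
\]
where the first map is well-defined and injective precisely because $\{x \in F : \ell x \in T : \ell^e\} = T : \ell^{e+1}$. Taking Hilbert series gives the recursion
\[
H_{F/(T : \ell^e)}(t) = t \cdot H_{F/(T : \ell^{e+1})}(t) + H_{F/(T : \ell^e + \ell F)}(t).
\]

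Next I would iterate this recursion starting from $e = 0$ (noting $T : \ell^0 = T$, so $F/(T:\ell^0) = F/T$). Unwinding $r$ steps yields
\[
H_{F/T}(t) = \sum_{e=0}^{r-1} H_{F/(T : \ell^e + \ell F)}(t)\, t^e \; + \; t^r \cdot H_{F/(T : \ell^r)}(t).
\]
It then remains to evaluate the tail term $H_{F/(T : \ell^r)}(t)$. Here the stabilization hypothesis enters: since $T : \ell^r = T : \ell^{r+1}$, the map $\cdot \ell$ on $F/(T : \ell^r)$ is injective, so for every $k \ge r$ we also have $T : \ell^k = T : \ell^r$, and the sequence displayed above with $e = r$ reads
\[
0 \longrightarrow F/(T : \ell^r)\,(-1) \overset{\cdot \ell}{\longrightarrow} F/(T : \ell^r) \longrightarrow F/(T : \ell^r + \ell F) \longrightarrow 0.
\]
This gives $(1-t)\,H_{F/(T : \ell^r)}(t) = H_{F/(T : \ell^r + \ell F)}(t)$, hence $H_{F/(T : \ell^r)}(t) = \frac{1}{1-t}\,H_{F/(T : \ell^r + \ell F)}(t)$. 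Substituting this into the previous display produces exactly the claimed formula.

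I expect the only genuinely delicate point to be the verification that the first map in each short exact sequence is well-defined and injective, i.e., that the kernel of multiplication by $\ell$ on $F/(T:\ell^e)$ is exactly $(T : \ell^{e+1})/(T:\ell^e)$ (shifted in degree); this is a direct unraveling of the definition of colon modules, but it is what makes the whole argument run, and in particular it is where the hypothesis $T : \ell^r = T : \ell^{r+1}$ is used to terminate the recursion with a clean geometric-series tail. Everything else is a routine manipulation of Hilbert series of short exact sequences (additivity) and of the degree shift $(-1)$ contributing the factor $t$.
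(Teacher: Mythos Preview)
Your proposal is correct and follows essentially the same approach as the paper: the paper's proof writes down exactly the same short exact sequence
\[
0 \to \big(F/(T : \ell^{j+1}) \big ) (-1) \to F/(T : \ell^j) \to F/(T : \ell^j + \ell F) \to 0
\]
and then refers to \cite[Lemma 6.8]{NR} for the remaining manipulation, which is precisely the iteration and geometric-series tail computation you have spelled out in detail.
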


\begin{proof}
For every integer $j \ge 0$, multiplication by $\ell$ on $F/T : \ell^j$ induces an exact sequence of graded modules 
\[
0 \to \big(F/T : \ell^{j+1} \big ) (-1) \to F/T : \ell^j \to F/(T : \ell^j + \ell F) \to 0. 
\]
Now one concludes as in \cite[Lemma 6.8]{NR}.
\end{proof}

Using this result repeatedly, we obtain the following version. 

\begin{lem}
       \label{lem:gen repeated division}
Let $T$ be a monomial submodule of a finitely generated graded free $\Pb_n$-module $F$ for some $n \in \N$. Fix an integer $r > 0$ such that, for every $i \in [c]$, the monomial $x_{i, 1}^r$ does not divide any monomial minimal generator of $T$. Then the Hilbert series of $F/T$ can be written as  
\[
H_{F/T} (t) = \sum_{e = (e_1,\ldots,e_c)\in \Z^c \atop 0 \le e_l \le r}
\frac{t^{|e|}}{(1-t)^{\gamma (e)}} \cdot H_{F/(T : x_{\lpnt, 1}^e + \x_1 F)}  (t), 
\]
where $|e| = e_1 + \cdots + e_c$ and $\gamma (e) = \# \{e_l \; \mid \; e_l = r \text{ and } 1 \le l \le c\}$. 
\end{lem}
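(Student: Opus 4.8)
The plan is to iterate Lemma~\ref{lem:repeated division} over the $c$ distinct linear forms $x_{1,1},\dots,x_{c,1}$, peeling off one variable at a time. First I would observe that the hypothesis on $r$ guarantees, for each $i\in[c]$, that $x_{i,1}^r$ does not divide any monomial minimal generator of $T$, and that the same remains true after colon-ing by any of the monomials $x_{\lpnt,1}^e$ appearing along the way (colons only ever remove variables from generators, never add powers), so the stabilization hypothesis $T':x_{i,1}^r = T':x_{i,1}^{r+1}$ of Lemma~\ref{lem:repeated division} is available at every stage with the \emph{same} uniform $r$. Indeed, if $x^b$ is a monomial minimal generator of a colon module $T' = T:x_{\lpnt,1}^{e}$, then $x^b$ divides some minimal generator of $T$ (up to the already-removed powers), and in particular the exponent of $x_{i,1}$ in $x^b$ is at most $r-1$; hence $x_{i,1}^r(x^b/x_{i,1}^{\min(b_i,\cdot)})$ considerations give $T':x_{i,1}^r = T':x_{i,1}^{r+1}$.

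The key steps, in order:

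\textbf{Step 1.} Apply Lemma~\ref{lem:repeated division} to $F/T$ with $\ell = x_{1,1}$, obtaining
\[
H_{F/T}(t) = \sum_{e_1=0}^{r-1} H_{F/(T:x_{1,1}^{e_1}+x_{1,1}F)}(t)\, t^{e_1}
\;+\; H_{F/(T:x_{1,1}^{r}+x_{1,1}F)}(t)\,\frac{t^r}{1-t}.
\]
This already exhibits the shape of the claimed formula in the variable $e_1$: a factor $t^{e_1}$ for $0\le e_1<r$ and a factor $t^r/(1-t)$ for $e_1=r$, matching $\gamma$ counting how many coordinates equal $r$.

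\textbf{Step 2.} Apply Lemma~\ref{lem:repeated division} again to each summand, now with $\ell = x_{2,1}$, to the module $F/(T':x_{1,1}^{e_1}F)$-type quotients — more precisely to $\bar F/\bar T$ where one works modulo $x_{1,1}F$ and $\bar T$ is the image of $T:x_{1,1}^{e_1}$. Here one must check that colon-ing by $x_{2,1}$ commutes appropriately with the quotient by $x_{1,1}F$; since $x_{1,1}$ and $x_{2,1}$ are distinct variables, $(T:x_{1,1}^{e_1} + x_{1,1}F):x_{2,1}^{e_2} = (T:x_{1,1}^{e_1}x_{2,1}^{e_2}) + x_{1,1}F$ as submodules of $F$ (for monomial modules this is a direct check on monomial generators), so the recursion stays within the family of modules $T:x_{\lpnt,1}^e + \x_1 F$ we want. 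Iterating through $i=1,\dots,c$ and collecting the $t$-powers and $(1-t)$-denominators multiplicatively yields exactly $\sum_{e} \frac{t^{|e|}}{(1-t)^{\gamma(e)}} H_{F/(T:x_{\lpnt,1}^e + \x_1 F)}(t)$, since each coordinate contributes $t^{e_l}$ with an extra $1/(1-t)$ precisely when $e_l=r$, and $|e|=\sum e_l$, $\gamma(e)=\#\{l : e_l=r\}$.

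\textbf{Step 3.} Conclude by induction on $c$, the base case $c=1$ being Lemma~\ref{lem:repeated division} itself (with the stabilization hypothesis verified as above), and the inductive step being the commutation identity of Step~2 together with one more application of Lemma~\ref{lem:repeated division}.

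\textbf{Main obstacle.} The genuine content is the bookkeeping in Step~2: verifying that $(T:x_{\lpnt,1}^{e}+\x_1 F):x_{i,1}^{e_i} = T:x_{\lpnt,1}^{e+e_i\eb_i}+\x_1 F$ (here $\eb_i$ the $i$-th standard vector) so that the iterated colons assemble into the single colon $T:x_{\lpnt,1}^e$, and that the stabilization $T':x_{i,1}^r = T':x_{i,1}^{r+1}$ persists for every colon module $T'$ encountered. Both are routine for \emph{monomial} modules — one argues on monomial minimal generators and uses that no generator is divisible by $x_{i,1}^r$ — but they are exactly the places where monomiality (as opposed to arbitrary graded submodules) is used, so I would state them as small lemmas or remarks before running the induction.
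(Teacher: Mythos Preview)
Your proposal is correct and follows essentially the same route as the paper: induct on the number of variables $x_{1,1},\dots,x_{c,1}$ processed, applying Lemma~\ref{lem:repeated division} once per variable, and use the monomial colon identity $(T:x_{1,1}^{e_1}\cdots x_{k-1,1}^{e_{k-1}}+(x_{1,1},\dots,x_{k-1,1})F):x_{k,1}^{e_k}=T:x_{1,1}^{e_1}\cdots x_{k,1}^{e_k}+(x_{1,1},\dots,x_{k-1,1})F$ to keep the iterated colons in the desired form. One small cleanup: at stage $k$ the submodule you mod out by is $(x_{1,1},\dots,x_{k,1})F$, not the full $\x_1 F$ (which only appears at the final step $k=c$); your Step~2 has this right but your ``Main obstacle'' paragraph writes $\x_1 F$ throughout, so just align the notation when you write it up.
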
 

\begin{proof} Using induction on $k$, we show more generally for $k \in [c]$ that 
\begin{align}
     \label{eq:hilb-inductive}
H_{F/T} (t) = \sum_{e = (e_1,\ldots,e_k)\in \Z^k \atop 0 \le e_l \le r}
\frac{t^{|e|}}{(1-t)^{\gamma_k (e)}} \cdot H_{F/(T : x_{1, 1}^{e_1} \cdots x_{k, 1}^{e_k} + (x_{1, 1}\ldots,x_{k, 1}) F)}  (t), 
\end{align}
where $\gamma_k (e) = \# \{e_l \; \mid \; e_l = r \text{ and } 1 \le l \le k\}$. For $k = c$, this proves the desired statement. 

Let $k = 1$. The assumption that $x_{1,1}^r$ does not divide any monomial generator of $T$ 
means that $T : x_{1,1}^r = T = T :x_{1,1}^{r+1}$. Hence, we may apply \Cref{lem:repeated division}  
and Equation \eqref{eq:hilb-inductive} follows. 

Assume $2 \le k \le c$. Since $T$ is a monomial module, one has 
\[
\big ( T : x_{1, 1}^{e_1} \cdots x_{k-1, 1}^{e_{k-1}} + (x_{1, 1}\ldots,x_{k-1, 1}) F \big ) : x_{k, 1}^{e_k} = T : x_{1, 1}^{e_1} \cdots x_{k-1, 1}^{e_{k-1}} x_{k, 1}^{e_k} + (x_{1, 1}\ldots,x_{k-1, 1}) F 
\]
and 
\begin{align}
     \label{eq:colon modules}
\hspace{2em}&\hspace{-2em}
\big ( T : x_{1, 1}^{e_1} \cdots x_{k-1, 1}^{e_{k-1}} + (x_{1, 1}\ldots,x_{k-1, 1}) F \big ) : x_{k, 1}^{e_k} + x_{k, 1} F
\\
& \hspace{2em} \nonumber   
= T : x_{1, 1}^{e_1} \cdots x_{k-1, 1}^{e_{k-1}} x_{k, 1}^{e_k}   + (x_{1, 1}\ldots,x_{k-1, 1}, x_{k, 1}) F. 
\end{align}
Using that $x_{k, 1}^r$ does not divide any minimal generator of $T$, 
the first equality implies that 
\[
\big ( T : x_{1, 1}^{e_1} \cdots x_{k-1, 1}^{e_{k-1}} + (x_{1, 1}\ldots,x_{k-1, 1}) F \big ) : x_{k, 1}^{r} = 
\big ( T : x_{1, 1}^{e_1} \cdots x_{k-1, 1}^{e_{k-1}} + (x_{1, 1}\ldots,x_{k-1, 1}) F \big ) : x_{k, 1}^{r+1}. 
\]
Hence,   \Cref{lem:repeated division} is applicable to any module $T : x_{1, 1}^{e_1} \cdots x_{k-1, 1}^{e_{k-1}} + (x_{1, 1}\ldots,x_{k-1, 1}) F$ with $0 \le e_l \le r$ by using $\ell = x_{k,1}$. Combined with the induction hypothesis this gives the desired Equation \eqref{eq:hilb-inductive} because of Equality \eqref{eq:colon modules} and $\gamma_k (e_1,\ldots,e_k) = \gamma_{k-1} (e_1,\ldots,e_{k-1}) + \gamma_1 (e_k)$. 
\end{proof}

We are now ready for the main result of this section. In particular, it gives a restriction on  the 
irreducible factors appearing in the denominator of an equivariant Hilbert series. Recall that we 
are considering $\OI$-modules over $\Pb = (\XO{1})^{\otimes c}$. 

\begin{thm}
     \label{thm:shape Hilb mon submodule}
If $\M$ is a monomial submodule of $\bF = \Fo{d}$ with $d \ge 0$, then the equivariant Hilbert series of $\Fb/\M$ is of the form
\[
H_{\Fb/\M} (s, t) = \frac{g(s, t)}{(1-t)^a \cdot \prod_{j =1}^b [(1-t)^{c_j} - s \cdot f_j (t)]},
\]
where $a, b, c_j$ are non-negative integers with $c_j \le c$, \ $g (s, t) \in \Z[s, t]$, and each $f_j (t)$ is a polynomial in $\Z[t]$ satisfying $f_j (1) >  0$ and $f_j (0) = 1$.
\end{thm}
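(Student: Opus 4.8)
The statement will be proved by induction on $d \ge 0$. The base case $d = 0$ is essentially done in \cite{NR}, so the bulk of the work is the inductive step. Fix $d \ge 1$ and assume the result for $\OI$-submodules of $\Fo{d-1}$ (with the same $c$). Let $\M \subset \Fo{d}$ be a monomial submodule with $m := \wi^+(\M)$. Choose an integer $r > 0$ large enough that no variable $x_{i,1}$ with $i \in [c]$ occurs with exponent $\ge r$ in any minimal monomial generator of any $\M_n$; since $\M$ is generated in width $m$ and $\Pb$ acts by substitution, one finite bound works for all $n$. The plan is to apply \Cref{lem:gen repeated division} in each width $n$ to $T = \M_n \subset F = \Fb_n$ and then sum the resulting expressions against $s^n$ to build $H_{\Fb/\M}(s,t)$.

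First I would fix an exponent vector $e = (e_1,\dots,e_c)$ with $0 \le e_l \le r$ and analyze the ``slice generating function'' $\sum_{n} H_{\Fb_n/(\M_n : x_{\lpnt,1}^e + \x_1 \Fb_n)}(t)\, s^n$. By \Cref{prop:decomposition}(a), for all $n \ge m+1$ this Hilbert series equals $H_{\Gb_{n-1}/\Qb'(e)_{n-1}}(t) + H_{\Fb_{n-1}/\Qb''(e)_{n-1}}(t)$, where $\Qb'(e)$ is a monomial submodule of $\Gb = \Fo{d-1}$ and $\Qb''(e)$ is a monomial submodule of $\Fb = \Fo{d}$ generated in width $m$ with $\Qb''(e)_{m-1} = 0$. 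Summing over $n$, the $\Gb$-part contributes $s \cdot H_{\Gb/\Qb'(e)}(s,t)$ (up to a polynomial correction absorbing the finitely many small widths $n \le m$), and the $\Fb$-part contributes $s \cdot H_{\Fb/\Qb''(e)}(s,t)$, again modulo a polynomial. By the inductive hypothesis, $H_{\Gb/\Qb'(e)}(s,t)$ already has the asserted shape with $c_j \le c$. So after multiplying by $t^{|e|}/(1-t)^{\gamma(e)}$ and summing over the finitely many $e$, the $\Gb$-contributions combine into a rational function of the required form (common denominators of finitely many functions of that shape again have that shape, and $(1-t)$-powers are harmless). What remains is to handle the recursive $\Fb$-term $s \cdot H_{\Fb/\Qb''(e)}(s,t)$, which has the \emph{same} shape problem we started with.

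The key to closing the recursion is a termination argument using the invariant $\si$ from \Cref{def:size}. By \Cref{rem:size} (applied with $m = \wi^+(\M)$) we have $\si(\M) \ge \si(\Qb''(e))$ for each $e$, and by \Cref{cor:size comparison} the largest relevant value is $\si(\Qb''(0))$. Moreover \Cref{prop:decomposition}(b) says $\si(\M) = \si(\Qb''(0))$ if and only if $\M_n = \Qb''(0)_n$ for all $n \ge m$. So either (i) $\M = \Qb''(0)$ in all widths $\ge m$, in which case the width-$n$ exact sequence $0 \to (\Fb_n/\M_n)(-1) \xrightarrow{\cdot x_{\lpnt,1}} \Fb_n/(\M_n) \to \Fb_n/(\M_n + \x_1\Fb_n) \to 0$ — valid because $x_{\lpnt,1}$-colon is eventually stable here, as $\M : x_{i,1}^r = \M$ in each width — lets us solve directly for $H_{\Fb/\M}(s,t)$ in terms of the already-controlled quotient $H_{\Fb/(\M + \x_1\Fb)}$, picking up exactly one denominator factor of the form $(1-t)^{c'} - s f(t)$; or (ii) $\si(\Qb''(e)) < \si(\M)$ strictly for the relevant $e$, so we recurse on $\M \rightsquigarrow \Qb''(e)$ with a strictly smaller value of $\si$, and this cannot continue forever since $\si$ is a non-negative integer. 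Unwinding the (finite-depth) recursion, $H_{\Fb/\M}(s,t)$ is a finite $\Z[s,t]$-combination, with denominators $(1-t)^{\gamma(e)}$, of functions of the asserted form, hence of the asserted form itself.

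The main obstacle I anticipate is \textbf{bookkeeping the denominator and the factor $[(1-t)^c - s]$ coming from the shift in width versus the recursively produced factors $[(1-t)^{c_j} - s f_j(t)]$}. Specifically: (a) verifying that the new factor produced in case (i) genuinely has the normalized form $f(1) > 0$, $f(0) = 1$ — this should follow because $\Fb_n/(\M_n + \x_1\Fb_n)$ is a nonzero finite-length-graded quotient so its numerator evaluated at $t=1$ is positive, and $f(0)=1$ reflects the constant term of $H$ being $1$; (b) making sure the degree-in-$s$ of each irreducible factor stays $\le 1$, i.e. that no factor of the form $(1-t)^c - s f(t)$ ever gets "squared" — this is where the precise structure of \Cref{lem:gen repeated division} (which introduces only $(1-t)$-powers in the new denominators, never $[(1-t)^c - s]$-powers) is essential, since the $s$-dependence enters solely through the recursion, not through the colon-and-reduce step; and (c) carefully tracking the polynomial corrections from the finitely many widths $n \le m$ where \Cref{prop:decomposition}(a) does not apply — these only change the numerator $g(s,t)$, not the denominator, but one must check they do not introduce denominators of a forbidden type. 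I would organize the write-up so that these normalization bookkeeping points are isolated into one lemma about the class of rational functions of the stated form (closure under $\Z[s,t]$-linear combinations, multiplication by $t^a/(1-t)^b$, and the substitution $H \mapsto (\text{poly}) + s H(s,t)$), after which the induction is a clean application.
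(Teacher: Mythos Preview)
Your overall framework---double induction on $d$ and on $\si(\M)$, feeding \Cref{lem:gen repeated division} into \Cref{prop:decomposition}---matches the paper. But the termination step is mishandled in a way that leaves a genuine gap.

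The dichotomy (i)/(ii) is the wrong cut. It is not a case split on $\M$; it is a partition of the index set $\{e : 0 \le e_l \le r\}$ into those $e$ with $\si(\Qb''(e)) = \si(\M)$ and those with strict inequality. Both types of $e$ typically occur for the same $\M$, and both must be handled in the same computation. For the strict-inequality $e$'s you invoke the inner induction on $\si$, as you say. For the equality $e$'s, \Cref{prop:decomposition}(b) gives $\Qb''(e)_n = \M_n$ for all $n \ge m$, so the corresponding term on the right-hand side is $\frac{t^{|e|}\, s}{(1-t)^{\gamma(e)}} \sum_{n \ge m} H_{\Fb_n/\M_n}(t)\, s^n$, which you move to the left. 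Collecting, the left-hand side becomes
\[
\Big[\,1 - \sum_{e:\ \si(\Qb''(e)) = \si(\M)} \tfrac{t^{|e|}\, s}{(1-t)^{\gamma(e)}}\,\Big] \cdot \sum_{n \ge m} H_{\Fb_n/\M_n}(t)\, s^n,
\]
and clearing $(1-t)^\gamma$ with $\gamma = \max\{\gamma(e) : \si(\Qb''(e)) = \si(\M)\}$ yields the single new factor $(1-t)^\gamma - s\, p(t)$ with $p(t) = \sum_{e:\ \text{equality}} (1-t)^{\gamma - \gamma(e)} t^{|e|}$. Then $p(1) > 0$ is immediate (it counts the $e$'s achieving $\gamma(e) = \gamma$), and $p(0) = 1$ follows because \Cref{cor:size comparison} forces $e = 0$ to lie in the equality set whenever that set is nonempty, and $e = 0$ is the only summand with $t^{|e|} = 1$.

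Your proposed case (i) argument via ``$0 \to (\Fb_n/\M_n)(-1) \xrightarrow{\cdot x_{\lpnt,1}} \Fb_n/\M_n \to \Fb_n/(\M_n + \x_1 \Fb_n) \to 0$'' does not work: when $c > 1$ there is no single linear form whose cokernel is $\Fb_n/(\M_n + \x_1 \Fb_n)$, and even for $c = 1$ the left term should be $\Fb_n/(\M_n : x_{1,1})$, which is not $\Fb_n/\M_n$ in general (the hypothesis that $x_{1,1}^r$ divides no minimal generator gives $\M_n : x_{1,1}^r = \M_n : x_{1,1}^{r+1}$, not $\M_n : x_{1,1} = \M_n$). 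This exact sequence is simply not available; the recursion closes algebraically by the move-to-the-left trick above, not by a new short exact sequence.

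Finally, your obstacle (b) is a non-issue: the statement allows repeated factors in the product $\prod_j [(1-t)^{c_j} - s f_j(t)]$, so nothing needs to be checked about ``squaring''.
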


\begin{proof}
Define $\Fo{-1}$ as the zero module over $\Pb$.  We use induction on $d \ge -1$. If $d = -1$ the claim is clearly true. Let $d \ge 0$. If $\M = 0$ then \Cref{prop:hilb free OI-mod} shows the assertion. 

Let $\M$ be non-trivial. Thus $wi^+ (\M)$ and $\si (\M)$ are non-negative integers. Set $m = \wi^+ (\M)$, and so $m \ge d$.  
Recall that, for every $n$, the module $\Fb_n/\M_n$ is graded and finitely generated over the noetherian polynomial ring 
$\Pb_n$. Thus its Hilbert series is of the form 
\[
H_{\Fb_n/\M_n} (t) = \frac{g_n (t)}{(1-t)^{c n}}, 
\]
where $g_n (t)$ is a polynomial in $\Z[t]$. Hence, it suffices to show that the formal power series 
\[
\sum_{n \ge m} H_{\Fb_n/\M_n} (t) s^n = \sum_{n \ge m, j \ge 0} \dim_K [\Fb_n/\M_n]_j s^n t^j
\]
is of the form as stated in the theorem. 
We now use induction on $\si (\M) \ge 0$. If $\si (\M) = \sum_{j=0}^{e^+(\M_m)} \dim_K [\Fb_m/\M_m]_j = 0$, then $\M_m = \Fb_m$, and thus $\Fb_n/\M_n = 0$ for every $n \ge m$ as $m \ge d$. This gives 
$\sum_{n \ge m} H_{\M_n} (t) s^n = 0$, and we are done.  

Let $\si (\M) \ge 1$. 
Since $\M_m$ is finitely generated there is an integer $r > 0$ such that none of the powers $x_{1,1}^r,\ldots,x_{c,1}^r$ divides any 
of the monomial minimal generators of $\M_m$. Thus, for every $n \ge m$, the monomial minimal generators of $\M_n$ are 
also not divisible by any of these powers. Applying \Cref{lem:gen repeated division} to every module $\M_n$ we obtain  
\begin{align*}
\sum_{n \ge m+1} H_{\Fb_n/\M_n} (t) s^n  
& = \sum_{n \ge m+1} \left [ \sum_{e = (e_1,\ldots,e_c)\in \Z^c \atop 0 \le e_l \le r}
\frac{t^{|e|}}{(1-t)^{\gamma (e)}} \cdot H_{\Fb_n/(\M_n : x_{\lpnt, 1}^e + \x_1 \Fb_n)}  (t) \right ] \cdot s^n \\
& = \sum_{e = (e_1,\ldots,e_c)\in \Z^c \atop 0 \le e_l \le r} \frac{t^{|e|}}{(1-t)^{\gamma (e)}} \cdot \left [  \sum_{n \ge m+1} H_{\Fb_n/(\M_n : x_{\lpnt, 1}^e + \x_1 \Fb_n)}  (t)  s^n \right ]. 
\end{align*}
For every $e$ and $n \ge m+1$, \Cref{prop:decomposition}(a) gives a decomposition of the form 
\[
\Fb_n/(\M_n : x_{\lpnt, 1}^e  + \x_1 \Fb_n) \cong \Gb_{n-1}/\Qb'(e)_{n-1} \oplus \Fb_{n-1}/\Qb''(e)_{n-1}, 
\]
where $\Qb'(e)$ is a monomial submodule of $\Gb = \Fo{d-1}$ and $\Qb''(e)$ is a monomial 
submodule of $\Fb$. Here we abuse notation as in \Cref{rem:size}. 
Using these decompositions, we get 
\begin{align*} 
\hspace{2em}&\hspace{-2em}
\sum_{n \ge m+1} H_{\Fb_n/\M_n} (t) s^n \\
& =  \sum_{e = (e_1,\ldots,e_c)\in \Z^c \atop 0 \le e_l \le r} \frac{t^{|e|}}{(1-t)^{\gamma (e)}} \cdot \left [
\sum_{n \ge m+1} H_{\Gb_{n-1}/\Qb'(e)_{n-1}} s^n  +  \sum_{n \ge m+1} H_{\Fb_{n-1}/\Qb''(e)_{n-1}} (t) s^n
\right ] \\
& =  \sum_{e = (e_1,\ldots,e_c)\in \Z^c \atop 0 \le e_l \le r} \frac{t^{|e|}}{(1-t)^{\gamma (e)}} \cdot s   \cdot  \left [
 \sum_{n \ge m} H_{\Gb_{n}/\Qb'(e)_{n}} s^{n} 
\right ] \\
& \hspace*{1cm}  +  \sum_{e = (e_1,\ldots,e_c)\in \Z^c \atop 0 \le e_l \le r} \frac{t^{|e|}}{(1-t)^{\gamma (e)}} \cdot s   \cdot  \left [ 
 \sum_{n \ge m} H_{\Fb_{n}/\Qb''(e)_{n}} (t) s^{n} 
\right ].  
\end{align*} 
By induction on $d$, each of the formal power series 
$\sum_{n \ge m} H_{\Gb_{n}/\Qb'(e)_{n}} s^{n} $ is rational of the desired form. Collecting terms, we  
conclude 
\begin{align*}
\sum_{n \ge m+1} H_{\Fb_n/\M_n} (t) s^n 
& = \frac{h(s, t)}{f(s, t)} + 
\sum_{e = (e_1,\ldots,e_c)\in \Z^c \atop 0 \le e_l \le r} \frac{t^{|e|}}{(1-t)^{\gamma (e)}} \cdot s   \cdot  \left [ 
 \sum_{n \ge m} H_{\Fb_{n}/\Qb''(e)_{n}} (t) s^{n}
\right ],  
\end{align*}
where $\frac{h(s, t)}{f(s, t)}$ is a rational function as described in the right-hand side of the statement. 

By \Cref{prop:decomposition}(b) and \Cref{rem:size}, we know for every $e$ that $\si (\Qb''(e)) \le \si (\M)$. Thus, we can re-write the last equality as 
\begin{align*}
\hspace{2em}&\hspace{-2em}
\sum_{n \ge m} H_{\Fb_n/\M_n} (t) s^n - H_{\Fb_m/\M_m} (t) s^m  -  \frac{h(s, t)}{f(s, t)} \\
& =  
\sum_{e = (e_1,\ldots,e_c)\in \Z^c \atop 0 \le e_l \le r} 
\left [ 
\sum_{\si (\Qb''(e)) < \si (\M)} 
\frac{t^{|e|}}{(1-t)^{\gamma (e)}} \cdot s   \cdot  
  \sum_{n \ge m} H_{\Fb_{n}/\Qb''(e)_{n}} (t) s^{n}\right ] \\
 &  \hspace*{1cm}  + \sum_{e = (e_1,\ldots,e_c)\in \Z^c \atop 0 \le e_l \le r} 
\left [ 
 \sum_{\si (\Qb''(e)) = \si (\M)} \frac{t^{|e|}}{(1-t)^{\gamma (e)}} \cdot s   \cdot  
  \sum_{n \ge m} H_{\Fb_{n}/\Qb''(e)_{n}} (t) s^{n} 
\right ].  
\end{align*} 
By induction on $\si (\M)$, we know that $\sum_{n \ge m} H_{\Fb_{n}/\Qb''(e)_{n}} (t) s^{n} $ 
has the desired form if 
$\si (\Qb''(e)) < \si (\M)$. This implies 
\begin{align*}
\hspace{2em}&\hspace{-2em}
\sum_{n \ge m} H_{\Fb_n/\M_n} (t) s^n - \frac{\tilde{h}(s, t)}{\tilde{f}(s, t)} \\
& =  
 \sum_{e = (e_1,\ldots,e_c)\in \Z^c \atop 0 \le e_l \le r} 
\left [ 
 \sum_{\si (\Qb''(e)) = \si (\M)} \frac{t^{|e|}}{(1-t)^{\gamma (e)}} \cdot s   \cdot  
 \sum_{n \ge m} H_{\Fb_{n}/\Qb''(e)_{n}} (t) s^{n} 
\right ],  
\end{align*} 
where 
$\frac{\tilde{h}(s, t)}{\tilde{f}(s, t)}$ is a rational function as described in the statement. Now we use \Cref{prop:decomposition} again. Its part (b) says $\Qb''(e)_n = \M_n$ for $n \ge m$ if $\si (\Qb''(e)) = \si (\M)$. Substituting we get 
\begin{align*}
\hspace{2em}&\hspace{-2em}
\sum_{n \ge m} H_{\Fb_n/\M_n} (t) s^n - \frac{\tilde{h}(s, t)}{\tilde{f}(s, t)} \\
& =  
 \sum_{e = (e_1,\ldots,e_c)\in \Z^c \atop 0 \le e_l \le r} 
\left [ 
 \sum_{\si (\Qb''(e)) = \si (\M)} \frac{t^{|e|}}{(1-t)^{\gamma (e)}} \cdot s   \cdot  
 \sum_{n \ge m} H_{\Fb_{n}/\M_{n}} (t) s^{n} 
\right ],  
\end{align*}
and so 
\begin{align}
     \label{eq:almost}
\left [ 1 - 
 \sum_{e = (e_1,\ldots,e_c)\in \Z^c, 0 \le e_l \le r  \atop  \si (\Qb''(e)) = \si (\M)} 
 \frac{t^{|e|} \cdot s}{(1-t)^{\gamma (e)}}   \right ]   
 \cdot   \sum_{n \ge m} H_{\Fb_n/\M_n} (t) s^n = \frac{\tilde{h}(s, t)}{\tilde{f}(s, t)}. 
\end{align}

If there is no module $\Qb''(e)$ with $\si (\Qb''(e)) = \si (\M)$ the left-hand is simply $\sum_{n \ge m} H_{\Fb_n/\M_n} (t) s^n$, and  we are done. 

Assume there is some $\Qb''(e)$ with $0 \le e_l \le r$ and $\si (\Qb''(e)) = \si (\M)$. Thus the following number is well-defined:
\[
\gamma = \max \{ \gamma (e) \; \mid \; e \in \Z^c, \ 0 \le e_l \le r, \  \si (\Qb''(e)) = \si (\M) \}.
\]
Observe that $\gamma \le c$ because $\gamma (e) \le c$ for every $e$ by definition (see \Cref{lem:gen repeated division}).  Define a polynomial $p(t) \in \Z[t]$ by 
\begin{equation}
     \label{eq:def of p(t)}
p(t) = \sum_{e = (e_1,\ldots,e_c)\in \Z^c, 0 \le e_l \le r  \atop  \si (\Qb''(e)) = \si (\M)}  
(1-t)^{\gamma - \gamma (e)} \cdot t^{|e|}. 
\end{equation}
Evaluating at $1$, we get
\[
p(1) = \# \{e \; \mid \; e \in \Z^c, \ 0 \le e_l \le r, \  \si (\Qb''(e)) = \si (\M), \ \gamma (e) = \gamma \},  
\]
which is positive by definition of $\gamma$. We claim that $p(0) = 1$. Indeed, we assumed that there is some $\Qb''(e)$ with $\si (\Qb''(e)) = \si (\M)$. Since $e \ge 0 = (0,\ldots,0) \in \N_0^c$ \Cref{cor:size comparison} gives $\si (\Qb''(0)) \ge \si (\Qb''(e))$. Combined with  \Cref{prop:decomposition}(b), we obtain
\[
\si (\M) \ge \si ( \Qb''(0)) \ge \si (\Qb''(e)) = \si (\M), 
\]
that is, $\si (\Qb''(0)) = \si (\M)$. Hence, in the definition of $p(t)$ there is a summand with $e = 0$. It is equal to $(1-t)^{\gamma}$, and so $p (0) = 1$, as claimed.

Finally, Equation \eqref{eq:almost} can be re-written as 
\[
\frac{(1-t)^{\gamma} -  s \cdot p(t)}{(1-t)^{\gamma}} \cdot   \sum_{n \ge m} H_{\Fb_n/\M_n} (t) s^n = \frac{\tilde{h}(s, t)}{\tilde{f}(s, t)}. 
\]
Since we have already shown that $\gamma \le c$ and $p(1) > 0$, $p(0) = 1$, it follows that $\sum_{n \ge m} H_{\Fb_n/\M_n} (t) s^n$ has the desired form. 
\end{proof}

The argument at the end of above proof gives more information about the polynomial $p(t)$ than stated in \Cref{thm:shape Hilb mon submodule}. 

\begin{rem}
    \label{rem:refined numerator for c =1}
Adopt the assumptions and notation of \Cref{thm:shape Hilb mon submodule} and its proof. 
    
(i) 
Assume that the rational function describing the Hilbert series of $\M$ is in reduced form, that is, numerator and denominator are relatively prime in $\Q[s, t]$. 
The proof of \Cref{thm:shape Hilb mon submodule} shows that every irreducible factor of the denominator other than $(1-t)$ and $[(1-t)^c - s]$ is an irreducible factor of some polynomial $[(1-t)^{\gamma} - s \cdot p(t)]$, where $p(t)$ is defined in  Equation  \eqref{eq:def of p(t)}) and $\gamma$ just above it.

(ii)    
Assume there is some $\tilde{e} \in \N_0^c$ with $\si (\Qb''(\tilde{e}))  = \si (\M)$ and consider the  
polynomial 
\[
p(t) = \sum_{e = (e_1,\ldots,e_c)\in \Z^c, 0 \le e_l \le r  \atop  \si (\Qb''(e)) = \si (\M)}  
(1-t)^{\gamma - \gamma (e)} \cdot t^{|e|}. 
\]
Then  \Cref{cor:size comparison} and the above argument show each $e \in \Z^c$ with $0 \le e \le \tilde{e}$ contributes a summand $(1-t)^{\gamma - \gamma (e)} \cdot t^{|e|}$ to $p(t)$. 
\end{rem}

If $c=1$ this observation gives a rather precise description of the denominator of an equivariant Hilbert series. 

\begin{thm}
     \label{thm:shape Hilb mon submodule c = 1}
If $\M$ is a monomial submodule of $\bF = \Fo{d}$ over $\XO{1}$ with $d \ge 0$, then the equivariant Hilbert series of $\Fb/\M$ is of the form
\[
H_{\Fb/\M} (s, t) = \frac{g(s, t)}{(1-t)^a \cdot (1-t-s)^k \prod_{j =1}^b [1 - s \cdot (1 + t + \cdots + t^{e_j})]},
\]
where $a, b, k, e_j$ are non-negative integers and  \ $g (s, t) \in \Z[s, t]$. 
\end{thm}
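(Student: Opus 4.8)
The plan is to adapt the proof of \Cref{thm:shape Hilb mon submodule} to the special case $c = 1$, tracking the extra information that \Cref{rem:refined numerator for c =1} makes available. Since $c=1$, the index set for the vector $e$ collapses to a single integer $e$ with $0 \le e \le r$, and the exponent $\gamma(e) \in \{0,1\}$ simply records whether $e = r$. I would run the same double induction on $d \ge -1$ and on $\si(\M) \ge 0$ exactly as before. The base cases ($d = -1$, or $\M = 0$, or $\si(\M) = 0$) are handled verbatim, the first nontrivial input being \Cref{prop:hilb free OI-mod}, which for $c = 1$ gives $H_{\Fb}(s,t) = s^d(1-t)/(1-t-s)^{d+1}$ — of the asserted shape with $a = 0$, $b = 0$, $k = d+1$.

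In the inductive step I would reach, just as in \Cref{thm:shape Hilb mon submodule}, the identity \eqref{eq:almost}, which for $c = 1$ reads
\[
\Bigl[ 1 - \sum_{\substack{0 \le e \le r \\ \si(\Qb''(e)) = \si(\M)}} \frac{t^{e} \cdot s}{(1-t)^{\gamma(e)}} \Bigr] \cdot \sum_{n \ge m} H_{\Fb_n/\M_n}(t)\, s^n = \frac{\tilde h(s,t)}{\tilde f(s,t)},
\]
where the right-hand side already has the desired form by the two inductions. The set $S = \{ e \in \Z : 0 \le e \le r,\ \si(\Qb''(e)) = \si(\M)\}$ is, by \Cref{cor:size comparison} together with $\si(\Qb''(0)) = \si(\M)$ (which holds whenever $S \ne \emptyset$, by the same argument as in the proof of \Cref{thm:shape Hilb mon submodule}), an \emph{initial segment} $\{0, 1, \ldots, e^*\}$ of $\{0,\dots,r\}$ for some $e^* = e^*(\M)$. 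This is the crucial structural point that the general-$c$ argument does not give and that the case $c=1$ does: $\si(\Qb''(e))$ is weakly decreasing in $e$, so the $e$'s achieving the maximum value $\si(\M)$ form a prefix.

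Now I distinguish two cases according to whether $e^* = r$ or $e^* < r$. If $e^* < r$, then $\gamma(e) = 0$ for every $e \in S$, so the bracketed factor is $1 - s(1 + t + \cdots + t^{e^*})$, exactly one of the allowed factors in the denominator; multiplying through and absorbing it yields the claimed form with one more factor of type $[1 - s(1+t+\cdots+t^{e_j})]$. If $e^* = r$, then $S = \{0,\ldots,r\}$ and the term $e = r$ contributes $t^r s/(1-t)$ while the terms $e < r$ contribute $(1+t+\cdots+t^{r-1})s$; clearing the denominator $(1-t)$ gives
\[
\frac{(1-t) - s\bigl[(1-t)(1+t+\cdots+t^{r-1}) + t^r\bigr]}{1-t} = \frac{(1-t) - s}{1-t},
\]
since $(1-t)(1+t+\cdots+t^{r-1}) + t^r = 1$. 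Hence the bracketed factor equals $(1-t-s)/(1-t)$, which contributes one more factor of $(1-t-s)$ to the denominator and a harmless $(1-t)$ to the numerator. In both cases $\sum_{n \ge m} H_{\Fb_n/\M_n}(t)s^n$ has the asserted shape, and adding back the polynomial $H_{\Fb_m/\M_m}(t) s^m$ (which is in $\Z[s,t]$) does not change it. This closes the induction.

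I expect the main obstacle to be purely bookkeeping: one must carefully verify that the set $S$ is genuinely an initial segment — this needs $\si(\Qb''(0)) = \si(\M)$ whenever $S \ne \emptyset$, which follows from \Cref{cor:size comparison} and \Cref{prop:decomposition}(b) exactly as in the proof of \Cref{thm:shape Hilb mon submodule} — and then confirm the telescoping identity $(1-t)\sum_{e=0}^{r-1} t^e + t^r = 1$ that collapses the $e^* = r$ case to the factor $(1-t-s)$. Neither step is deep, but both must be spelled out to be sure no stray factors of $(1-t)$ or $(1-t-s)$ are lost or duplicated relative to the clean statement. The rest of the argument is a line-by-line specialization of the $c$-variable proof already given.
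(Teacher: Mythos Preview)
Your proposal is correct and follows essentially the same approach as the paper: the paper packages the argument by invoking \Cref{rem:refined numerator for c =1} rather than explicitly re-running the double induction, but the key step --- identifying $S$ as an initial segment $\{0,\ldots,\tilde e\}$ via \Cref{cor:size comparison} and then computing the resulting factor as either $(1-t-s)$ or $1-s(1+t+\cdots+t^{\tilde e})$ --- is identical in both. One small slip: $H_{\Fb_m/\M_m}(t)\,s^m$ is not in $\Z[s,t]$ but has a $(1-t)^{m}$ denominator, and in any case once you have reached \eqref{eq:almost} this term is already absorbed into $\tilde h/\tilde f$, so nothing needs to be ``added back''; this does not affect the argument.
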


\begin{proof}
Write $H_{\Fb/\M} (s , t) = \frac{g(s, t)}{f(s, t)}$ with $g(s, t), f(s, t) \in \Z[s, t]$ and $f(s, t)$ as described in   \Cref{thm:shape Hilb mon submodule}. We may assume that none of the irreducible factors of $f(s, t)$ divides the numerator $g(s, t)$. 
Using that $c = 1$, \Cref{rem:refined numerator for c =1}(ii) shows that it remains to consider 
irreducible factors of some polynomial $[(1-t)^{\gamma} - s \cdot p(t)]$, where $p(t)$ is defined in  Equation  \eqref{eq:def of p(t)}) and $\gamma$ just above it. Assume there is an integer with $0 \le e \le r$ and $\Qb''(e)) = \si (\M)$. Let $\tilde{e}$ be the maximum integer $e$ with these properties. Thus,  
\[
p(t) = \sum_{e  \in \Z, 0 \le e \le \tilde{e}  \atop  \si (\Qb''(e)) = \si (\M)}  
(1-t)^{\gamma - \gamma (e)} \cdot t^{|e|}. 
\]
Now we consider two cases. If $\tilde{e} = r$ we get $\gamma (\tilde{e}) = 1$ and $\gamma (e) = 0$ if $0 \le e < \tilde{e}$, and so $\gamma = 1$. If $\tilde{e} < r$ we obtain $\gamma (e) = 0$ if $0 \le e \le \tilde{e}$, and so $\gamma = 0$. 
Using \Cref{rem:refined numerator for c =1}, this yields
\begin{align*}
p(t) & = \sum_{e = 0}^{\tilde{e}}   (1-t)^{\gamma - \gamma (e)} \cdot t^{|e|} \\
& = \begin{cases}
(1-t)[1 + t + \cdots + t^{\tilde{e}-1}] + t^{\tilde{e}} = 1 & \text{ if } \tilde{e} = r; \\
1 + t + \cdots + t^{\tilde{e}} & \text{ if } \tilde{e} < r.   
\end{cases}
\end{align*}
It follows
\begin{align*}
(1-t)^{\gamma} - s \cdot p(t) 
& = \begin{cases}
(1-t)- s  & \text{ if } \tilde{e} = r; \\
1 - s[1 + t + \cdots + t^{\tilde{e}}] & \text{ if } \tilde{e} < r,    
\end{cases}
\end{align*}
which completes the argument. 
\end{proof}

\begin{rem}
      \label{rem:optimality denominator}
The descriptions of the denominator polynomials in \Cref{thm:shape Hilb mon submodule} and   \Cref{thm:shape Hilb mon submodule c = 1} seem rather efficient. 

(i)   If $c = 1$ and $\M$ is the submodule of $\Fo{0}$ that is generated in width $m$ by $x_1^{e_1} \cdots x_{b-1}^{e_{b-1}} x_m^{e_b}$ with $(e_1,\ldots,e_b) \in \N^b$, then \cite[Theorem 1.1]{GN} gives for the equivariant Hilbert series
\[
H_{\Fb/\M} (s, t) = \frac{g(s, t)}{(1-t)^{m-1}  \prod_{j =1}^b [1 - s \cdot (1 + t + \cdots + t^{e_j-1})]}
\]
with some polynomial  $g(s, t) \in \Z[s,t]$ that is not divisible by any of the irreducible factors of the denominator. The Hilbert series of $\Fo{d}$ over $\XO{1}$ is (see \Cref{prop:hilb free OI-mod}) 
\[
H_{\Fo{d}} (s, t) = \frac{(1-t) \cdot s^d }{(1-t-s)^{d+1}}. 
\]
Taking suitable direct sums it follows that any of the denominators permissible by \Cref{thm:shape Hilb mon submodule c = 1} can indeed be realized as the Hilbert series of some $\OI$-module over  $\XO{1}$. 

(ii) Fix any integer $c \ge 1$. Let $0 \neq f (t) \in \Z[t]$ be the numerator polynomial of a Hilbert series of some graded quotient of a polynomial ring in $c$ variables, that is, 
\[
H_A (t) = \frac{f(t)}{(1-t)^D}, 
\]
where $A = K[y_1,\ldots,y_c]/J$ for some homogeneous ideal $J$ and $f(1) > 1$. Then it is known that $D$ is the Krull dimension of $A$ and $f(0) = 1$. By \cite[Example 7.3]{NR}, there is an ideal $\Ib$ of $\Fo{0}$ over $(\XO{1})^{\otimes c}$ with equivariant Hilbert series
\[
H_{\Fo{0}/\Ib} (s, t) = \frac{(1-t)^D}{(1-t)^D - s f(t)}. 
\]
Taking suitable direct sums it follows that any polynomial $[(1-t)^D- s f(t)]$ with $f(t)$ and $D$ as above can appear as an irreducible factor in the denominator in an equivariant Hilbert series when written in reduced form. This indicates the effectiveness of the restrictions on the polynomials $f_j(t)$ given in \Cref{thm:shape Hilb mon submodule} and \Cref{rem:refined numerator for c =1}. 
\end{rem}


\section{Finitely Generated $\FI$- and $\OI$-modules}
\label{sec:general case}

We extend the main result of the previous section from monomial submodules to arbitrary finitely generated graded modules. As a consequence we obtain results on the growth of invariants of the width-wise components of any such $\OI$- or $\FI$-module over a noetherian polynomial $\OI$- or $\FI$-algebra, respectively. 

We first need an extension of some results on Gr\"obner bases in \cite{NR2}. These are true in great generality. In fact, at first we consider $\OI$-modules that are not necessarily graded, Moreover, let $K$ be any commutative noetherian ring. 

Fix integers $c \ge 1$ and $d_1,\ldots,d_k \ge 0$, and consider the $\OI$-module 
\[
\Fb = \bigoplus_{i=1}^k \Fo{d_i}
\]
over $\Pb = (\XO{1})^{\otimes c}$. A \emph{monomial} in $\Fb$ is a monomial of some summand 
$\Fo{d_i}_m$, that is, it is an element  of the form
\[
x^a  e_{\pi, i} = x_{\lpnt, 1}^{a_1} \cdots x_{\lpnt, m}^{a_m} e_{\pi, i}, \quad \text{ where } \pi \in \Hom_{\FIO} ([d_i], [m]) \text{ for some } m \in \N, i \in [k], \; a_j \in \N_0^c.   
\]
The second index in $e_{\pi, i}$ is used in order to distinguish the monomials in the direct summands. We need to order the monomials in $\Fb$. 

\begin{defn}
A \emph{monomial order} on $\Fb$ is a total order $>$ on the monomials of $\Fb$ such that
 if $x^a  e_{\pi, i}, x^b  e_{\rho, j} $ are monomials in $\Fb_m$ , then $x^a  e_{\pi, i} > x^b  e_{\rho, j} $ implies:
\begin{enumerate}
\item $x^p x^a  e_{\pi, i}   > x^p x^b  e_{\rho, j}  > x^b  e_{\rho, j}$ for every monomial $x^p \neq 1$ in $\Pb_m$;

\item $\eps_*(x^a) e_{\eps \circ \pi, i}  > \eps_*(x^b)  e_{\eps \circ \rho, j}$ for every $\eps \in \Hom_{\FIO} ([m], [n])$; \quad and

\item $e_{\iota_{m, n} \circ \pi, i}  > e_{ \pi, i} $ whenever $n > m$, where $\iota_{m, n}\colon [m] \to [n], l \mapsto l$. 
\end{enumerate}

\end{defn}

Such orders exist.

\begin{ex}
\label{exa:mon-order}
Order the monomials in every polynomial ring $\Pb_m$ lexicographically  by using the following  
order of the variables: $x_{i, j} > x_{i', j'}$ if either $j > j'$ or $j = j'$ and $i > i'$. Define 
$e_{\pi, i} > e_{\rho, j}$ if $i < j$. For fixed $i$, identify a monomial $e_{\pi, i} \in \Fo{d_i}_m$ with a 
vector $(m, \pi (1),\ldots,\pi (d_i)) \in \N^{d_i+1}$ and order such monomials by using the 
lexicographic order on $\N^{d_i+1}$. For example, this implies that every $e_{\pi, i} \in \Fo{d_i}_m$ is 
smaller than any $e_{\tilde{\pi}, i} \in \Fo{d_i}_n$ if $m < n$.

Finally, for any  monomials $x^a e_{\pi, i}$ and $x^b e_{\tilde{\pi}, j}$ in $\Fo{d_i}_m$, define 
$x^ e_{\pi, i} > x^b e_{\tilde{\pi}, j}$ if either $e_{\pi, i} > e_{\tilde{\pi}, j}$ or 
$e_{\pi, i} = e_{\tilde{\pi}, j}$ and 
$x^a > x^b$ in $\Pb_m$. One checks that this gives indeed 
a monomial order on $\Fb$.
\end{ex}

We extend the earlier definition of $\OI$-divisibility in $\Fo{d}$ to  $\Fb$ by defining that a monomial $\mu \in \Fb$ is \emph{$\FIO$-divisible} by a monomial $\nu \in \Fb$ if $\mu$ and $\nu$ are in the same summand of $\Fb$ and $\nu$ $\OI$-divides $\mu$ in that summand.  
Equivalently, one has for two monomials 
$\mu, \nu$  of $\Fb$ that $\mu$ is $\OI$-divisible by $\nu$ if and only if $\mu \in \la \nu \ra_{\Fb}$.
It follows that every monomial order on $\Fb$ refines the partial order defined by $\FIO$-divisibility. More is true. 

Recall that a \emph{well-partial-order} on a set $S$ is a partial order $\le$ such that, for any infinite sequence $s_1,s_2, \ldots$ of elements in $S$, there is a pair of indices $i < j$ such that $s_i \le s_j$.

\begin{prop}
\label{prop:unique-min-element} \mbox{ } 
\begin{itemize}
\item[(a)] $\FIO$-divisibility is a well-partial-order on the set of monomials in $\Fb$. 

\item[(b)] Fix any monomial order $>$ on $\Fb$. Every non-empty set of monomials of $\Fb$ has a unique minimal element in the order $>$.
\end{itemize}
\end{prop}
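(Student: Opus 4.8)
The plan is to prove (a) by a Higman-type argument and then to obtain (b) essentially for free, using the fact recorded just above the proposition that every monomial order on $\Fb$ refines $\FIO$-divisibility, together with the elementary observation that a total order refining a well-partial-order is a well-order. I would begin (a) by reducing to a single free summand: monomials lying in distinct summands $\Fo{d_i}$ are $\FIO$-incomparable, so in any infinite sequence of monomials of $\Fb$ infinitely many lie in one $\Fo{d_i}$, and it is enough to prove that $\FIO$-divisibility is a well-partial-order on $\Mon(\Fo{d})$ for each $d \ge 0$.

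For this, set $S = \N_0^c$, which is a well-partial-order by Dickson's lemma, and recall Higman's lemma: for a well-partial-order $S$, the set $S^*$ of finite words over $S$ ordered by subword embedding (comparing $S$-entries by $\le$) is again a well-partial-order, and finite products of well-partial-orders are well-partial-orders. Encode a monomial $x^u e_{\pi} \in \Fo{d}_m$, with $\pi(0):=0$, by the $(d+1)$-tuple of its blocks
\[
\big((u_{\pi(0)+1},\dots,u_{\pi(1)}),\ \dots,\ (u_{\pi(d-1)+1},\dots,u_{\pi(d)}),\ (u_{\pi(d)+1},\dots,u_m)\big),
\]
where the first $d$ blocks are nonempty words over $S$ and the last (the ``tail'') is an arbitrary word over $S$; every such tuple arises from a unique monomial. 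The divisibility criterion \eqref{eq:def-divisible} shows that the order-preserving injection $\eps$ witnessing $x^v e_{\rho} \in \la x^u e_{\pi}\ra$ must carry the $j$-th block of $x^u e_{\pi}$ into the $j$-th block of $x^v e_{\rho}$ (sending last letter to last letter for $1 \le j \le d$) and the tail into the tail; conversely such blockwise data glue to a valid $\eps$. Identifying a nonempty word $w\cdot a$ with the pair $(w,a)\in S^*\times S$, this makes the encoding an isomorphism of posets from $(\Mon(\Fo{d}),\ \FIO\text{-divisibility})$ onto $(S^*\times S)^{d}\times S^*$ equipped with the product of the subword orders, which is a well-partial-order by the two lemmas. (Equivalently one could run the same computation through the bijection $\mu$ of \Cref{prop:bijection} and the insertion description of \Cref{cor:insertion} and \Cref{rem:insertion}.)

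To deduce (b): a monomial order $>$ is total, so a minimal element of a non-empty set of monomials in the order $>$ is just a least element, automatically unique if it exists, and it only remains to produce one. Given non-empty $A\subseteq\Mon(\Fb)$, part (a) says $\FIO$-divisibility is a well-partial-order, so the set $A_{\min}$ of its minimal elements in $A$ is a non-empty finite antichain and every element of $A$ is $\FIO$-divisible by some element of $A_{\min}$ (well-foundedness makes $\{b\in A: b\le a\}$ have a minimal element, which is then minimal in $A$; absence of infinite antichains gives finiteness of $A_{\min}$). Let $\mu_{0}$ be the $>$-least element of the finite set $A_{\min}$; then for any $\nu\in A$, choosing $\mu\in A_{\min}$ with $\mu$ $\FIO$-dividing $\nu$ gives $\nu\ge\mu\ge\mu_{0}$ in the order $>$, since $>$ refines $\FIO$-divisibility, so $\mu_{0}$ is the $>$-least element of $A$. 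The one genuinely technical point in all of this is the blockwise description of $\FIO$-divisibility in (a)---verifying that the witnessing injection splits across blocks with endpoints matched correctly; Dickson's and Higman's lemmas, finite products, and the passage from (a) to (b) are routine.
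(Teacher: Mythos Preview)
Your proof is correct and structurally matches the paper's: both reduce (a) and (b) to the case of a single summand $\Fo{d}$ by infinite pigeonhole. The only difference is that the paper then simply cites \cite[Proposition~6.2 and Corollary~6.5]{NR2} for the single-summand case, whereas you supply a self-contained argument---the Higman--Dickson block encoding for (a), and the standard ``a total order refining a well-partial-order is a well-order'' deduction for (b) (using the fact, recorded just before the proposition, that any monomial order refines $\FIO$-divisibility). Your block decomposition and the verification that $\FIO$-divisibility corresponds exactly to the product of Higman orders on $(S^*\times S)^d\times S^*$ are correct; this is essentially the content behind the cited results in \cite{NR2}, so the difference is one of explicitness rather than strategy.
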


\begin{proof}
This follows because the statements are true for the monomials in any summand $\Fo{d_i}$ of $\Fb$ by \cite[Proposition 6.2 and Corollary 6.5]{NR2}. 
\end{proof}

\begin{defn}
Let $>$ be a monomial order on $\Fb$.
Consider an element $q = \sum c_\mu \mu \in \Fb_m$ for some $m \in \N_0$ with monomials $\mu$ and coefficients
$c_{\mu} \in K$. If $q \neq 0$ we define its \emph{leading
monomial} $\lm (q)$ as the largest monomial $\mu$ with a non-zero coefficient $c_{\mu}$. This coefficient is called the \emph{leading coefficient}, denoted $\lc (q)$. The \emph{leading term} of $q$ is $\lt (q) = \lc (q) \cdot \lm (q)$.

For of a subset $E$ of $\Fo{d}$, we set $\lt (E) = \{ \lt (q) \; \mid \; q \in E\}$. 
\end{defn}

Recall that, for a subset $E$ of any $\FIO$-module $\M$, we denote by
$\la E \ra_{\M}$ the smallest $\FIO$-submodule of $\M$ that contains $E$. 

\begin{defn}
Fix a monomial order $>$ on $\Fb$,
and let $\M$ be an $\FIO$-submodule of $\Fb$.
\begin{enumerate}
\item The \emph{initial module} of $M$ is
\[
\ini (\M) = \ini_{>} (\M) = \la \lt (q) \s q \in \M \ra_{\Fb}.
\]
It is a submodule of $\Fb$.

\item A subset $B$ of $\M$  is a \emph{Gr\"obner basis} of $\M$ (with respect to $>$) if
\[
\ini (\M) = \la \lt (B) \ra_{\Fb}.
\]

\end{enumerate}
\end{defn}

We can now state the needed extension of \cite[Theorem 6.14]{NR2}. 

\begin{thm}
\label{thm:finite-G-basis}
Every $\FIO$-submodule of a finitely generated free $\FIO$-module  over $\Pb \cong (\XO{1})^{\otimes c}$ has a finite Gr\"obner basis (with respect to $>$).
\end{thm}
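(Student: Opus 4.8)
The plan is to mimic the classical proof of Dickson's lemma / Hilbert's basis theorem in the $\OI$-setting, using the combinatorial input already packaged in \Cref{prop:unique-min-element}. Let $\M$ be an $\FIO$-submodule of $\Fb = \bigoplus_{i=1}^k \Fo{d_i}$, and fix a monomial order $>$. The key observation is that a Gr\"obner basis of $\M$ is determined purely by the monomial submodule $\ini(\M) \subseteq \Fb$: if $B \subseteq \M$ is chosen so that $\{\lt(q) : q \in B\}$ contains a generating set of the monomial module $\ini(\M)$, then $B$ is a Gr\"obner basis. So the proof reduces to showing that the monomial $\OI$-submodule $\ini(\M)$ is generated by finitely many of the leading terms $\lt(q)$, $q \in \M$.

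First I would recall that $\ini(\M) = \langle \lt(q) : q \in \M\rangle_{\Fb}$ is a monomial submodule, hence (since $\Fb$ is a finitely generated free $\OI$-module over the noetherian polynomial $\OI$-algebra $\Pb \cong (\XO{1})^{\otimes c}$, invoking \cite[Theorem 6.15]{NR2} as cited in the proof of \Cref{thm:rational hilb monomial submod}) it is itself finitely generated, say by monomials $\nu_1,\ldots,\nu_r$. Alternatively, and more self-containedly, one can argue directly from \Cref{prop:unique-min-element}(a): the set $S$ of monomials $\mu \in \Fb$ that occur as $\lt(q)$ for some $q \in \M$ has, with respect to the well-partial-order given by $\OI$-divisibility, only finitely many minimal elements $\nu_1,\ldots,\nu_r$ (a standard consequence of being a well-partial-order), and every monomial in $\ini(\M)$ is $\OI$-divisible by one of them, because $\ini(\M)$ as an $\OI$-module is generated by $S$ and $\OI$-divisibility is what generation of monomial submodules means (cf.\ \eqref{eq:def-divisible} and \eqref{eq:union monomials}). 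Either way we obtain finitely many monomials $\nu_1,\ldots,\nu_r$ with $\ini(\M) = \langle \nu_1,\ldots,\nu_r\rangle_{\Fb}$ and each $\nu_j = \lt(q_j)$ for some $q_j \in \M$.

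Then I would set $B = \{q_1,\ldots,q_r\}$ and verify that $B$ is a Gr\"obner basis, i.e.\ $\ini(\M) = \langle \lt(B)\rangle_{\Fb}$. The inclusion $\supseteq$ is immediate since each $\lt(q_j) \in \ini(\M)$. For $\subseteq$, take any $q \in \M$ with $q \neq 0$; by construction $\lt(q) \in \ini(\M)$, so $\lm(q)$ is $\OI$-divisible by some $\nu_j = \lm(q_j)$, hence $\lm(q) = x^p\, \eps_*(\lm(q_j))$ for a suitable monomial $x^p$ and a morphism $\eps$. This is already enough: it shows $\lt(q) \in \langle \lt(B)\rangle_{\Fb}$, and since $\ini(\M)$ is generated by all such $\lt(q)$, we get $\ini(\M) \subseteq \langle \lt(B)\rangle_{\Fb}$. (One does not even need a division-algorithm argument here, because the initial module is \emph{defined} as the $\OI$-module generated by \emph{all} leading terms of elements of $\M$, not by a putative reduction process; the finiteness is entirely in the monomial module.)

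The only genuine work is the finiteness statement for monomial submodules of $\Fb$, and that has effectively been outsourced: it is \Cref{prop:unique-min-element}(a) (well-partial-order of $\OI$-divisibility on monomials of each $\Fo{d_i}$, hence on $\Fb$ since $\Fb$ is a finite direct sum and a monomial of $\Fb$ lives in exactly one summand) together with the elementary fact that a well-partial-order has finite antichains and finitely many minimal elements in any down-set. So the main obstacle is really just bookkeeping: making sure the multi-index $e_{\pi,i}$ notation is handled correctly across the direct summands and that ``$\OI$-divisibility in $\Fb$'' is literally ``$\OI$-divisibility within a common summand.'' I would state this reduction carefully at the start of the proof and then the argument is short.
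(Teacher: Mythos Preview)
Your proposal is correct and matches the paper's proof almost exactly: the paper also invokes \cite[Theorem 6.15]{NR2} to conclude that $\ini(\M)$ is finitely generated (since $\Fb$ is noetherian), then takes any finite set of elements of $\M$ whose leading terms generate $\ini(\M)$ as the desired Gr\"obner basis. Your alternative route via the well-partial-order of \Cref{prop:unique-min-element}(a) is what the paper alludes to when it says the result ``follows as in \cite[Theorem 6.14]{NR2}''; just note that in the generality of this section $K$ is only assumed to be a commutative noetherian ring, so one should work with leading terms rather than leading monomials (your noetherianity argument handles this cleanly, while the well-partial-order variant as written tacitly identifies $\lt(q)$ with $\lm(q)$).
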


\begin{proof}
Given the above results, 
this follows as in \cite[Theorem 6.14]{NR2}. However, we can argue more directly as follows. By \cite[Theorem 6.15]{NR2}, every finitely generated $\OI$-module $\Fb$ over $\Pb$ is noetherian. Thus, the initial module of any submodule $\M$ of $\Fb$ is finitely generated. The elements of $\M$ whose leading terms give a finite generating set of $\ini (\M)$ form a Gr\"obner basis of $\M$. 
\end{proof}

Consider now a monomial order $<$ on $\Fb$ that induces an order on the monomials in $\Pb$ as described in \Cref{exa:mon-order}. For any integer $n \ge 0$, the order $<$ on $\Fb$ induces a monomial order $\preceq$ on the $\Pb_n$-module $\Fb_n$. We denote the initial module of any $\Pb_n$-submodule $N$ of $\Fb_n$ by $\ini_{\preceq} (N)$. These constructions are compatible. For simplicity, we write $\ini (\M)_n$ for the width $n$ component of $\ini (\M) = \ini_{<} (\M)$. 

\begin{lem}
     \label{lem:compare initial modules}
Let $<$ be a monomial order on $\Fb$ that induces an order on the monomials in $\Pb$ as used in \Cref{exa:mon-order}. If $\M$ is any submodule of $\Fb$, then on has, for every $n \ge 0$, 
\[
\ini (\M)_n = \ini_{\preceq} (\M_n). 
\]
\end{lem}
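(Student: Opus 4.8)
The plan is to prove the two inclusions $\ini(\M)_n \subseteq \ini_{\preceq}(\M_n)$ and $\ini_{\preceq}(\M_n) \subseteq \ini(\M)_n$ separately, exploiting the explicit structure of monomial orders and the compatibility conditions in the definition of a monomial order on $\Fb$. Both $\ini(\M)_n$ and $\ini_{\preceq}(\M_n)$ are monomial submodules of the $\Pb_n$-module $\Fb_n$, so it suffices to compare their monomials; and since each is $\Pb_n$-spanned by a set of monomials, it is enough to show that each monomial generator of one lies in the other, which in turn reduces to comparing the monomial sets $\{\lm(q) : q \in \M_n\}$ (giving $\ini_\preceq(\M_n)$) with $\{\lm(q) : q \in \M\} \cap \Fb_n$ (the width-$n$ part of the $\OI$-module $\ini(\M)$). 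Note that monomial divisibility in $\Pb_n$ is just ordinary divisibility, while $\ini(\M)$ is the $\OI$-module generated by all leading terms, so its width-$n$ monomials are those $\OI$-divisible by some $\lm(q)$, $q \in \M$.

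For the inclusion $\ini_{\preceq}(\M_n) \subseteq \ini(\M)_n$: take $q \in \M_n$. Since $\M$ is an $\OI$-submodule and $\M_n = \M([n])$, we have $q \in \M$ as an element of the $\OI$-module, hence $\lt(q) \in \lt(\M)$, so $\lm(q) \in \ini(\M)$ and, being of width $n$, lies in $\ini(\M)_n$. The only subtlety is that the leading monomial computed with respect to $\preceq$ on $\Fb_n$ must agree with the one computed with respect to $<$ on $\Fb$; this is exactly the requirement that $\preceq$ is the order induced by $<$, so $\lm_\preceq(q) = \lm_<(q)$ for $q \in \Fb_n$. Then $\ini_\preceq(\M_n) = \langle \lm_\preceq(q) : q \in \M_n\rangle_{\Pb_n} \subseteq \ini(\M)_n$ follows since $\ini(\M)_n$ is closed under multiplication by monomials of $\Pb_n$.

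For the reverse inclusion $\ini(\M)_n \subseteq \ini_{\preceq}(\M_n)$: a monomial $\nu \in \ini(\M)_n$ is $\OI$-divisible by $\lm(q)$ for some $q \in \M_m$ with $m \le n$; that is, there is $\eps \in \Hom_{\OI}([m],[n])$ and a monomial $x^p \in \Pb_n$ with $\nu = x^p \cdot \eps_*(\lm(q))$ (working in the relevant summand $\Fo{d_i}$). The element $\eps_*(q) = \Fb(\eps)(q)$ lies in $\M_n$ because $\M$ is a functor. It now suffices to check that $\lm(\eps_*(q)) = \eps_*(\lm(q))$: this is precisely where axiom (ii) of a monomial order is used --- $\eps_*$ sends the largest monomial of $q$ to the largest monomial of $\eps_*(q)$, since for any other monomial $\mu$ of $q$ we have $\lm(q) > \mu$, hence $\eps_*(\lm(q)) > \eps_*(\mu)$, and distinct monomials of $q$ may a priori map to the same monomial but then their coefficients add (they cannot cancel the leading one, which is the unique maximal image). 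Thus $\lm(\eps_*(q)) \in \ini_\preceq(\M_n)$, and multiplying by $x^p$ keeps us inside $\ini_\preceq(\M_n)$, giving $\nu \in \ini_\preceq(\M_n)$.

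The main obstacle --- though it is a mild one --- is the bookkeeping around axiom (ii): one must ensure that applying $\eps_*$ to an element $q$ does not create cancellation that destroys the leading term. This is handled by noting that $\eps_*$ is injective on the monomials appearing in a fixed summand (since $\eps$ is injective, $\eps_*(x^a)e_{\eps\circ\pi,i}$ determines $x^a$ and $\pi$), so no two monomials of $q$ in the same summand collapse; monomials in different summands stay in different summands; hence $\lt(\eps_*(q)) = \eps_*(\lt(q))$ with leading coefficient $\lc(q) \ne 0$. With this observation both inclusions go through and the lemma follows.
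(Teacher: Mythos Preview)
Your proof is correct and follows the same two-inclusion strategy as the paper. The easy inclusion $\ini_{\preceq}(\M_n)\subseteq\ini(\M)_n$ is handled identically.

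For the reverse inclusion the two arguments diverge. The paper invokes the extra hypothesis that $<$ induces the specific variable ordering of \Cref{exa:mon-order} (variables $x_{i,j}$ with larger $j$ are larger) and argues that if $q\in\M$ has $\lt(q)=\lt(f)e_{\pi,k}\in\Fb_n$ then necessarily $f\in\Pb_n$. You instead work purely from axiom~(ii) of a monomial order together with the injectivity of each $\eps_*$ on monomials, deducing $\lt(\eps_*(q))=\eps_*(\lt(q))$; this is really the heart of the matter, since it shows every $\OI$-image of a leading term is again a leading term of an element of $\M_n$. Your route is more explicit, and it incidentally shows that the special hypothesis on the $\Pb$-ordering is not needed: the lemma holds for \emph{any} monomial order on $\Fb$. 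The paper's argument is shorter but leaves the step $\lt(\eps_*(q))=\eps_*(\lt(q))$ implicit, which is exactly the point you take care to justify.
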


\begin{proof}
The definitions imply $\ini_{\preceq} (\M_n) \subset \ini (\M)_n$. In order to show the reverse inclusion we use the fact that $\ini (\M)_n$ is a monomial module. It follows that it suffices to prove that if $q \in \M$ and $\lt (q) = \lt (f) e_{\pi, k} \in \Fb_n$ for some $f \in \Pb$, then $f \in \Pb_n$.  This is a consequence of our assumption on the order of the monomials in $\Pb$. Indeed, if a variable $x_{i, j}$ with $j > n$ divides any monomial appearing in $f$, then this monomial is greater than any monomial in $\Pb_n$ in our ordering, which contradicts $\lt (f) \in \Pb_n$. Hence $f$ must be a polynomial in $\Pb_n$.  
\end{proof}

We now turn to graded modules and their Hilbert series. Thus, we assume from now on that $K$ is a field. Observe that $\Fb = \bigoplus_{i=1}^k \Fo{d_i}$ has $k$ generators. Assigning any integers as degrees of these generators turns $\Fb$ into a graded $\OI$-module over $\Pb$ with its standard grading. 

\begin{cor}
    \label{cor:compare Hilb series}
If  $\M$ is a graded submodule of $\Fb$, then 
\[
H_{\Fb/\M)} (s, t) = H_{\Fb/\ini (\M)} (s, t).
\] 
\end{cor}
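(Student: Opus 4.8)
The plan is to reduce to the classical fact that passing to an initial submodule preserves the Hilbert function, applied width by width, and then combine the resulting identities via the definition of the equivariant Hilbert series.

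First I would fix an integer $n \ge 0$. Since $\Fb = \bigoplus_{i=1}^k \Fo{d_i}$, its width $n$ component $\Fb_n \cong \bigoplus_{i=1}^k (\Pb_n)^{\binom{n}{d_i}}$ is a finitely generated graded free $\Pb_n$-module, $\M_n$ is a graded $\Pb_n$-submodule, and the monomial order $<$ on $\Fb$ restricts to the monomial order $\preceq$ on $\Fb_n$. The key point to check about this order is that it is compatible with the standard grading: among the monomials of a fixed degree it is a total order, and for a homogeneous element $q \in \Fb_n$ the leading monomial $\lm(q)$ has the same degree as $q$. This follows directly from the construction in \Cref{exa:mon-order}, since there the comparison of monomials $x^a e_{\pi,i}$ and $x^b e_{\tilde\pi,j}$ in a fixed free summand in a fixed width only ever breaks ties by comparing the $x^a$ and $x^b$ with a (graded) term order on $\Pb_n$, and the generators are all assigned fixed degrees.

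Next I would invoke the Macaulay-type argument over the noetherian ring $\Pb_n$: the residue classes of the monomials of $\Fb_n$ that do \emph{not} lie in $\ini_{\preceq}(\M_n)$ form a $K$-basis of $\Fb_n/\M_n$. They span because any element can be reduced modulo $\M_n$ by repeatedly subtracting multiples of elements of a Gr\"obner basis of $\M_n$, which exists since $\Pb_n$ is noetherian; they are $K$-linearly independent because a nontrivial relation holding in $\Fb_n/\M_n$ would exhibit an element of $\M_n$ whose leading monomial is one of these monomials, contradicting that it is not in $\ini_{\preceq}(\M_n)$. By definition the very same set of monomials is a $K$-basis of $\Fb_n/\ini_{\preceq}(\M_n)$, and since the order is degree-compatible this is a graded basis in both cases. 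Hence $H_{\Fb_n/\M_n}(t) = H_{\Fb_n/\ini_{\preceq}(\M_n)}(t)$. Applying \Cref{lem:compare initial modules}, which gives $\ini_{\preceq}(\M_n) = \ini(\M)_n$, this becomes $H_{\Fb_n/\M_n}(t) = H_{\Fb_n/\ini(\M)_n}(t)$ for every $n$. Summing against $s^n$ over all $n \ge 0$ and using the definition of the equivariant Hilbert series yields
\[
H_{\Fb/\M}(s,t) = \sum_{n \ge 0} H_{\Fb_n/\M_n}(t)\, s^n = \sum_{n \ge 0} H_{\Fb_n/\ini(\M)_n}(t)\, s^n = H_{\Fb/\ini(\M)}(s,t),
\]
as desired.

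The one step requiring genuine attention — and the main (albeit mild) obstacle — is confirming that the monomial order on $\Fb$ is compatible with the standard grading, so that the standard-monomial basis of $\Fb_n/\M_n$ is a \emph{graded} basis; this is what makes the classical identity an identity of Hilbert \emph{series} rather than merely of ranks. Once that is verified, the remainder is the routine Gr\"obner-basis reduction argument over the noetherian polynomial ring $\Pb_n$ together with the already-established \Cref{lem:compare initial modules} and a formal rearrangement of power series.
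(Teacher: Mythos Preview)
Your proposal is correct and follows essentially the same route as the paper: reduce to the classical identity $H_{\Fb_n/\M_n}(t)=H_{\Fb_n/\ini_\preceq(\M_n)}(t)$ for each width $n$, invoke \Cref{lem:compare initial modules} to identify $\ini_\preceq(\M_n)$ with $\ini(\M)_n$, and sum in $s$. The paper simply cites the classical identity as ``well-known'', while you unpack it via the Macaulay standard-monomial argument; one small remark is that your emphasis on degree-compatibility of the order is unnecessary, since for a \emph{graded} submodule every homogeneous element has all its monomials in a single degree, so $\lm(q)$ automatically has the same degree as $q$ regardless of the term order.
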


\begin{proof}
Every module $\M_n$ is a graded submodule of the graded, finitely generated  free $\Pb_n$-module $\Fb_n$. Thus, it is well-known that $\M_n$ and $\ini_{\preceq} (\M_n)$ have the same Hilbert series. Hence \Cref{lem:compare initial modules} gives that $\Fb_n/\M_n$ and $\Fb_n/\ini (\M)_n$ also have the same Hilbert series. Now the claim follows. 
\end{proof}

Sometimes it is useful to adjust the grading of a graded $\OI$-module $\Nb$. 
For any integer $r$, we define $\Nb (r)$ to be the graded  $\OI$-module that is isomorphic to $\Nb$ as an $\OI$-module and  whose grading is defined by 
\[
[\Nb(r)_m]_j = [\Nb_m]_{r+j}. 
\]
Thus, passing to $\M(-r)$ for some sufficiently large integer $r$, it is harmless to assume that a finitely generated graded $\OI$-module has a generating set whose elements all have non-negative degrees. In this case, one says that $\M$ is generated in non-negative degrees. 

We are ready to establish the first main result of this section.

\begin{thm}
     \label{thm:shape Hilb f.g. OI-module}
If $\M$ is a finitely generated graded $\OI$-module over $(\XO{1})^{\otimes c}$ that is generated in non-negative degrees, then its equivariant Hilbert series is of the form
\[
H_{\M} (s, t) = \frac{g(s, t)}{(1-t)^a \cdot \prod_{j =1}^b [(1-t)^{c_j} - s \cdot f_j (t)]},
\]
where $a, b, c_j$ are non-negative integers with $c_j \le c$, \ $g (s, t) \in \Z[s, t]$, and each $f_j (t)$ is a polynomial in $\Z[t]$ satisfying $f_j (1) >  0$ and $f_j (0) = 1$.
\end{thm}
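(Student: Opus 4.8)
The plan is to reduce the statement to the monomial case already settled in \Cref{thm:shape Hilb mon submodule}, using Gröbner degeneration together with the direct-sum structure of free $\OI$-modules. First I would present $\M$ as a graded quotient of a free module. Since $\M$ is finitely generated and generated in non-negative degrees, we may choose homogeneous generators of $\M$ of widths $d_1,\ldots,d_k$ and degrees $r_1,\ldots,r_k\ge 0$. These give a surjection of graded $\OI$-modules $\Fb=\bigoplus_{i=1}^{k}\Fo{d_i}\to\M$ over $\Pb=(\XO{1})^{\otimes c}$, where the $i$-th generator of $\Fb$ is assigned degree $r_i$ (permissible by the remarks preceding \Cref{cor:compare Hilb series}). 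Letting $N$ be the graded kernel, we have $\M\cong\Fb/N$ and hence $H_{\M}(s,t)=H_{\Fb/N}(s,t)$.

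Next I would degenerate to monomials. Fix a monomial order $<$ on $\Fb$ of the kind constructed in \Cref{exa:mon-order}, so that it induces the required order on the monomials of $\Pb$. By \Cref{thm:finite-G-basis} the submodule $N$ has a finite Gröbner basis; in particular $\ini(N)$ is a monomial submodule of $\Fb$, and by \Cref{cor:compare Hilb series} we get $H_{\Fb/N}(s,t)=H_{\Fb/\ini(N)}(s,t)$. Now comes the one genuinely new (but routine) observation: any monomial submodule $\Nb$ of a direct sum $\bigoplus_{i=1}^{k}\Fo{d_i}$ splits as $\Nb=\bigoplus_{i=1}^{k}\Nb_i$, where $\Nb_i=\Nb\cap\Fo{d_i}$ is a monomial submodule of $\Fo{d_i}$. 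Indeed each monomial minimal generator of $\Nb$ lies in exactly one summand, and $\OI$-divisibility of monomials in $\Fb$ never moves a monomial out of its summand, so the $\OI$-submodule generated by those generators lying in $\Fo{d_i}$ stays inside $\Fo{d_i}$. Applying this to $\ini(N)$ gives $\ini(N)=\bigoplus_{i=1}^{k}M_i$ with each $M_i$ a monomial submodule of $\Fo{d_i}$, hence $\Fb/\ini(N)\cong\bigoplus_{i=1}^{k}\Fo{d_i}/M_i$ and
\[
H_{\M}(s,t)=\sum_{i=1}^{k}H_{\Fo{d_i}/M_i}(s,t),
\]
where in the $i$-th term the free module $\Fo{d_i}$ carries its generator in degree $r_i$.

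Finally I would assemble the answer. By \Cref{thm:shape Hilb mon submodule}, each $\Fo{d_i}/M_i$ with generator in degree zero has Hilbert series of the form $g_i(s,t)\big/\big[(1-t)^{a_i}\prod_{j=1}^{b_i}\big((1-t)^{c_{ij}}-s f_{ij}(t)\big)\big]$ with $c_{ij}\le c$, $g_i\in\Z[s,t]$, $f_{ij}\in\Z[t]$, $f_{ij}(1)>0$, $f_{ij}(0)=1$. Shifting the generator of the $i$-th summand into degree $r_i$ multiplies this series by $t^{r_i}$, which only replaces $g_i$ by $t^{r_i}g_i\in\Z[s,t]$ and thus preserves the stated shape. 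Putting the $k$ terms over the common denominator $D=\prod_{i=1}^{k}\big[(1-t)^{a_i}\prod_{j}\big((1-t)^{c_{ij}}-s f_{ij}(t)\big)\big]$ keeps the numerator in $\Z[s,t]$ and exhibits $H_{\M}(s,t)$ in exactly the required form, with denominator a product of powers of $(1-t)$ and factors $(1-t)^{c_l}-s f_l(t)$ satisfying $c_l\le c$, $f_l(1)>0$, $f_l(0)=1$ (the case $\M=0$ being trivial). There is no serious obstacle here: the substantive work was carried out in \Cref{prop:decomposition} and \Cref{thm:shape Hilb mon submodule}, and the only points needing a little care are the compatibility of $\ini(\cdot)$ with the direct-sum splitting and the harmless degree shifts.
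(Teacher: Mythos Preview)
Your proposal is correct and follows essentially the same route as the paper: present $\M$ as a graded quotient of a finite direct sum of free $\OI$-modules, pass to the initial module via \Cref{cor:compare Hilb series}, split the monomial initial module along the summands, and apply \Cref{thm:shape Hilb mon submodule} to each piece after accounting for the degree shifts $t^{r_i}$. The only difference is cosmetic notation for the shifts ($\Fo{d_i}(-r_i)$ versus assigning degree $r_i$ to the generator), and your explicit justification of the direct-sum splitting of monomial submodules is a point the paper simply asserts.
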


\begin{proof} 
Let $E = \{q_1,\ldots,q_k\}$ be a generating set of $\M$ with homogeneous elements $q_i \in \M_{d_i}$. It canonically determines a graded natural transformation 
\[
\Fb = \bigoplus_{i=1}^k \Fo{d_i}(-\deg q_i) \to \M 
\]
 whose image is $\M$. 
Thus, its kernel is a graded submodule $\Nb$ of $\Fb$ with  $\M \cong \Fb/\Nb$. 
Hence, choosing a suitable monomial order, \Cref{cor:compare Hilb series} gives that $\M$ and $\Fb/\ini (\Nb)$ have the same equivariant Hilbert series. Since $\ini (\Nb)$ is a monomial submodule there is an isomorphism of graded  $\OI$-modules 
\[
\Fb/\ini (\Nb)  \cong \bigoplus_{i=1}^k \Fo{d_i}(-\deg q_i)/\Nb^i 
\]
with monomial submodules $\Nb^i$ of $\Fo{d_i}(-\deg q_i)$. Observe that $\Fo{d_i}(-\deg q_i)/\Nb^i$ is equal to  $\big (\Fo{d_i}/\Nb^i (\deg q_i) \big)(- \deg q_i)$, and so 
\[
H_{\Fo{d_i}(-\deg q_i)/\Nb^i} (s, t) = t^{\deg q_i} H_{\Fo{d_i}/\Nb^i (\deg q_i)} (s, t)
\]
This gives 
\[
H_{\M} (s, t) = \sum_{i=1}^k  t^{\deg q_i} H_{\Fo{d_i}/\Nb^i (\deg q_i)} (s, t). 
\]
By assumption on $\M$, the degree of each $q_i$ is non-negative. Hence we conclude by applying  \Cref{thm:shape Hilb mon submodule} to each of the submodules $\Nb^i (\deg q_i)$. 
\end{proof}

\begin{rem}
     \label{rem:degree shifts Hilb}
(i) 
\Cref{thm:shape Hilb f.g. OI-module} is a full generalization and strengthening of one of the main results in \cite{NR}. Indeed, \cite[Proposition 7.2]{NR} may be viewed as result about the equivariant Hilbert series of graded quotients of $(\XO{1})^{\otimes c}$. 

(ii) 
The assumption on generator degrees is harmless. If $\M$ is finitely generated, then $\M(-k)$ is generated in non-negative degrees if $k \gg 0$. Since, 
\[
H_{\M (-k)} (s, t) = t^k \cdot H_{\M} (s, t)
\]
\Cref{thm:shape Hilb f.g. OI-module} gives for the equivariant Hilbert series of  any finitely graded module
\[
H_{\M} (s, t) = t^{-k} \cdot  \frac{g(s, t)}{(1-t)^a \cdot \prod_{j =1}^b [(1-t)^{c_j} - s \cdot f_j (t)]}
\]
for some integer $k \ge 0$. 
\end{rem}

\begin{ex} 
    \label{exa:ideal gen by one monomial}
If $\Ib$ is an ideal of $\Pb = (\XO{1})^{\otimes c}$ that is generated by one monomial then the equivariant Hilbert series of $\Pb /\Ib$ has been explicitly determined in \cite[Theorem 3.3]{GN}. In the special case, where $c =1$, let $\Ib$ be the ideal generated by $x_{i_1}^{a_1}  \cdots x_{i_r}^{a_r} \in \Pb_{i_r}$ with positive integers $a_1,\ldots,a_r$ and $i_1 < \cdots < i_r$. The equivariant Hilbert series of $\Pb/\Ib$ is
\[
H_{\Pb/\Ib} (s, t) =   \frac{g(s, t)}{(1-t)^{i_r -1} \cdot \prod_{j =1}^r [(1 - s \cdot (1 + t + \cdots + t^{a_j -1})]}, 
\]
where $g (s, t) \in \Z[s, t]$ is the polynomial with 
\[
g(s, t) \cdot (1-t-s) = (1-t)^{i_r -r} \prod_{j=1}^r (1-t-s + s t^{a_j}) - s^{i_r} t^{\sum_{j=1}^r a_j}.
\]
\end{ex}

There is a result analogous to \Cref{thm:shape Hilb f.g. OI-module}  for graded $\FI$-modules. It generalizes and strengthens  \cite[Theorem 7.8]{NR}. 

\begin{cor}
    \label{cor:shape Hilb f.g. FI-module}
If $\M$ is a finitely generated graded $\FI$-module over $(\XI{1})^{\otimes c}$ that is generated in non-negative degrees, then its equivariant Hilbert series is of the form
\[
H_{\M} (s, t) = \frac{g(s, t)}{(1-t)^a \cdot \prod_{j =1}^b [(1-t)^{c_j} - s \cdot f_j (t)]},
\]
where $a, b, c_j$ are non-negative integers with $c_j \le c$, \ $g (s, t) \in \Z[s, t]$, and each $f_j (t)$ is a polynomial in $\Z[t]$ satisfying $f_j (1) >  0$ and $f_j (0) = 1$.
\end{cor}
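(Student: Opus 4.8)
The plan is to deduce the $\FI$-statement from the $\OI$-statement (\Cref{thm:shape Hilb f.g. OI-module}) by a forgetful-functor argument, since every $\FI$-module restricts to an $\OI$-module and, crucially, this restriction does not change the width-$n$ components $\M_n$ as graded vector spaces, hence does not change the equivariant Hilbert series. First I would make precise the relationship between the polynomial algebras: the category $\FIO$ is a subcategory of $\FI$, so an $\FI$-algebra $\Ab$ in particular gives an $\OI$-algebra, and one checks that $(\XI{1})^{\otimes c}$, viewed as an $\OI$-algebra, is exactly $(\XO{1})^{\otimes c}$ (both have width-$n$ component $K[x_{i,j} \mid i \in [c],\ j \in [n]]$ with the same graded structure, as recorded in \Cref{rem:poly-alg}). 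Thus if $\M$ is a graded $\FI$-module over $\Pb = (\XI{1})^{\otimes c}$, then restricting along $\FIO \hookrightarrow \FI$ produces a graded $\OI$-module $\M^{\OI}$ over $(\XO{1})^{\otimes c}$ with $(\M^{\OI})_n = \M_n$ for every $n$, so that $H_{\M^{\OI}}(s,t) = H_{\M}(s,t)$ by the very definition of the equivariant Hilbert series.

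The one genuine point to verify is that finite generation is preserved under this restriction: we need $\M^{\OI}$ to be a finitely generated graded $\OI$-module so that \Cref{thm:shape Hilb f.g. OI-module} applies. This is where the analogue of \cite[Proposition 3.18]{NR2} enters. If $\M$ is generated as an $\FI$-module by homogeneous elements $q_1,\ldots,q_k$ of widths $d_1,\ldots,d_k$, then each $q_i$ still lies in $\M^{\OI}_{d_i} = \M_{d_i}$. The submodule of $\M^{\OI}$ generated by $q_1,\ldots,q_k$ need not be all of $\M^{\OI}$ on the nose (an $\FI$-morphism is a general injection, not just an order-preserving one), but one uses \cite[Lemma 2.3]{NR2}-type factorizations together with the fact that every injection $[m] \to [n]$ factors as an order-preserving injection followed by a permutation; since $\M_n$ is already a single $\Pb_n$-module and the action of $\Sym(n)$ by relabeling variables is realized within the $\OI$-structure via the homomorphisms $\eps_*$ for order-preserving $\eps$, one checks that the $\OI$-submodule generated by the $q_i$ and their images under the finitely many $\Sym$-relabelings needed in each relevant width coincides with $\M^{\OI}$. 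More efficiently: by \cite[Theorem 6.17]{NR2} any finitely generated $\FI$-module over $(\XI{1})^{\otimes c}$ is noetherian, and an even simpler route is to observe that $\M$ is a quotient of a finite direct sum of free $\FI$-modules $\Fi{d_i}$, each $\Fi{d_i}$ restricts to a finitely generated $\OI$-module (indeed $\Fi{d}_n = (\Pb_n)^{\binom{n}{d}}$, the same underlying module as $\Fo{d}_n$, and $\Fi{d}$ restricted to $\OI$ is generated in widths $d, d+1, \ldots, 2d$ or so by the various $e_\pi$ corresponding to $\Sym$-orbit representatives — a \emph{finite} set in each bounded width, with boundedness following from \cite[Proposition 6.2]{NR2}-style noetherianity), hence $\M^{\OI}$ is finitely generated.

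With finite generation of $\M^{\OI}$ in hand, and the grading of $\M^{\OI}$ inherited so that it too is generated in non-negative degrees (the generators live in the same graded pieces), \Cref{thm:shape Hilb f.g. OI-module} applies verbatim to $\M^{\OI}$ and yields
\[
H_{\M}(s,t) = H_{\M^{\OI}}(s,t) = \frac{g(s,t)}{(1-t)^a \cdot \prod_{j=1}^b [(1-t)^{c_j} - s \cdot f_j(t)]}
\]
with $a, b, c_j$ non-negative integers, $c_j \le c$, $g(s,t) \in \Z[s,t]$, and each $f_j(t) \in \Z[t]$ satisfying $f_j(1) > 0$ and $f_j(0) = 1$, which is exactly the assertion. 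I expect the main obstacle to be the bookkeeping in the finite-generation step — making airtight the claim that an $\FI$-generating set, together with finitely many $\Sym$-translates needed to compensate for the loss of arbitrary injections, gives an $\OI$-generating set in the restricted module, or alternatively invoking the right noetherianity statement from \cite{NR2}; everything else is a formal consequence of the fact that restriction along $\FIO \hookrightarrow \FI$ leaves the width components, and hence the equivariant Hilbert series, untouched.
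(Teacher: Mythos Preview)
Your approach is exactly the paper's: restrict along $\FIO \hookrightarrow \FI$, observe that the width components and hence the equivariant Hilbert series are unchanged, check that finite generation survives, and apply \Cref{thm:shape Hilb f.g. OI-module}. The paper dispatches the finite-generation step in one line by citing \cite[Remark 3.17]{NR2}.

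One correction to your middle paragraph: the claim that ``the action of $\Sym(n)$ by relabeling variables is realized within the $\OI$-structure via the homomorphisms $\eps_*$ for order-preserving $\eps$'' is false. In $\FIO$ the only endomorphism of $[n]$ is the identity, so no nontrivial permutation is available; the $\OI$-structure genuinely loses the $\Sym(n)$-action. Your alternative route through free modules is the right one: a finitely generated $\FI$-module is a quotient of a finite sum $\bigoplus \Fi{d_i}$, and each $\Fi{d}$ restricted to $\OI$ is finitely generated (this is the content of \cite[Remark 3.17]{NR2}), so the quotient is as well. Drop the incorrect sentence and the argument is clean.
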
 

\begin{proof}
We may consider $\M$ also an $\OI$-module over $(\XO{1})^{\otimes c}$. As such a module it is still finitely generated by \cite[Remark 3.17]{NR2}. Thus, \Cref{thm:shape Hilb f.g. OI-module} gives the result. 
\end{proof}

In the case where $\Pb$ is the smallest noetherian polynomial $\OI$- or $\FI$-algebra that is not equal to $K$ in every width, the above results can be considerably strengthened. 

\begin{cor}
    \label{cor: hilb for c=1}
Adopt the assumptions of \Cref{thm:shape Hilb f.g. OI-module}  or \Cref{cor:shape Hilb f.g. FI-module}. If $c = 1$, that is, $\Pb = \XO{1}$ or $\Pb = \XI{1}$, then the equivariant Hilbert series of 
$\M$ is of the form
\[
H_{\Fb/\M} (s, t) = \frac{g(s, t)}{(1-t)^a \cdot (1-t-s)^k \prod_{j =1}^b [1 - s \cdot (1 + t + \cdots + t^{e_j})]},
\]
where $a, b, k, e_j$ are non-negative integers and  \ $g (s, t) \in \Z[s, t]$.
\end{cor}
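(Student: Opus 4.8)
The plan is to run the same reduction used in the proof of \Cref{thm:shape Hilb f.g. OI-module}, but to conclude by invoking the sharper \Cref{thm:shape Hilb mon submodule c = 1} in place of \Cref{thm:shape Hilb mon submodule}. First I would dispose of the $\OI$-case with $\Pb = \XO{1}$. Choose homogeneous generators $q_1, \dots, q_r$ of $\M$ with $q_i \in \M_{d_i}$ and $\deg q_i \ge 0$, form the induced graded surjection $\Fb = \bigoplus_{i=1}^r \Fo{d_i}(-\deg q_i) \twoheadrightarrow \M$ with graded kernel $\Nb$, fix the monomial order of \Cref{exa:mon-order}, and apply \Cref{cor:compare Hilb series} to get $H_{\M}(s,t) = H_{\Fb/\ini(\Nb)}(s,t)$. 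Since $\ini(\Nb)$ is a monomial submodule, $\Fb/\ini(\Nb) \cong \bigoplus_{i=1}^r \Fo{d_i}(-\deg q_i)/\Nb^i$ with monomial submodules $\Nb^i$, and hence, exactly as in the proof of \Cref{thm:shape Hilb f.g. OI-module},
\[
H_{\M}(s,t) \;=\; \sum_{i=1}^r t^{\deg q_i}\, H_{\Fo{d_i}/\Nb^i(\deg q_i)}(s,t),
\]
where each $\Nb^i(\deg q_i)$ is a monomial submodule of $\Fo{d_i}$ over $\XO{1}$ (the grading shift does not affect the underlying $\OI$-module, so it is still generated by monomials).

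Next I would apply \Cref{thm:shape Hilb mon submodule c = 1} to each $\Nb^i(\deg q_i)$: it gives $H_{\Fo{d_i}/\Nb^i(\deg q_i)}(s,t) = g_i(s,t)/D_i(s,t)$ with $g_i \in \Z[s,t]$ and
\[
D_i(s,t) \;=\; (1-t)^{a_i}\,(1-t-s)^{k_i}\prod_{j=1}^{b_i}\bigl[\,1 - s\,(1+t+\cdots+t^{e_{ij}})\,\bigr].
\]
Putting the displayed sum over the common denominator $D(s,t) := \prod_{i=1}^r D_i(s,t)$, each numerator contribution $t^{\deg q_i}\, g_i(s,t)\,\prod_{l \ne i} D_l(s,t)$ lies in $\Z[s,t]$ because $\deg q_i \ge 0$, so their sum $g(s,t)$ lies in $\Z[s,t]$ as well. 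The denominator $D(s,t)$ is visibly of the asserted shape, with $(1-t)$-exponent $a = \sum_i a_i$, with $(1-t-s)$-exponent $k = \sum_i k_i$, and with the exponents $e_j$ ranging over all the $e_{ij}$. This proves the $\OI$-case. For the $\FI$-case, note that since $c = 1$ we have $(\XI{1})^{\otimes c} = \XI{1}$; by \cite[Remark 3.17]{NR2} a finitely generated graded $\FI$-module over $\XI{1}$ is a finitely generated graded $\OI$-module over $\XO{1}$ with the same graded components, hence the same equivariant Hilbert series, so the $\OI$-case just established yields the claim.

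There is essentially no obstacle here beyond bookkeeping, since all the substantive work lies in \Cref{thm:shape Hilb mon submodule c = 1}. The one point deserving a moment's attention is the passage through the degree shifts $(-\deg q_i)$, which is handled by the identity $H_{\Nb(-r)}(s,t) = t^{r} H_{\Nb}(s,t)$ together with the hypothesis that $\M$ is generated in non-negative degrees; this guarantees that clearing denominators introduces only non-negative powers of $t$, so the resulting numerator genuinely lies in $\Z[s,t]$.
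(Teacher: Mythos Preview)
Your proposal is correct and follows essentially the same approach as the paper's own proof, which is the one-line observation that the argument for \Cref{thm:shape Hilb f.g. OI-module} goes through verbatim once one invokes \Cref{thm:shape Hilb mon submodule c = 1} in place of \Cref{thm:shape Hilb mon submodule}. You have simply spelled out in detail the bookkeeping (degree shifts, common denominator, and the $\FI$-to-$\OI$ reduction via \cite[Remark 3.17]{NR2}) that the paper leaves implicit.
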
 

\begin{proof}
This follows as \Cref{thm:shape Hilb f.g. OI-module} by invoking \Cref{thm:shape Hilb mon submodule c = 1} instead of \Cref{thm:shape Hilb mon submodule}. 
\end{proof}

The information about the denominators in the above Hilbert series allows us draw conclusions about the growth of classical invariants along a sequence of graded modules over noetherian polynomial rings determined by an $\OI$- or $\FI$-module. 

\begin{thm}
     \label{thm:growth invariants} 
Let $\M$ be a finitely generated graded $\OI$-module over $(\XO{1})^{\otimes c}$ or a finitely generated graded $\FI$-module over $(\XI{1})^{\otimes c}$. Then there are integers $A, B, M, L$ with $0 \le A \le c$, $M > 0$, and $L \ge 0$ such that, for all $n \gg 0$,
\[
\dim \M_n = A n + B
\]
and the  limit  of $\frac{\deg \M_n}{M^{n } \cdot  n^{L}}$ as $n \to \infty$ exists and is equal to a positive rational number. In particular, the limits
\[
\lim_{n \to \infty} \frac{\dim \M_n}{n} = A \quad \text{ and } \quad  \lim_{n \to \infty} \sqrt[n]{\deg \M_n} = M
 \]
 are non-negative integers. 
\end{thm}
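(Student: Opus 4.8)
The plan is to extract everything from the explicit denominator furnished by \Cref{thm:shape Hilb f.g. OI-module} (in the $\OI$-case over $(\XO{1})^{\otimes c}$) and \Cref{cor:shape Hilb f.g. FI-module} (in the $\FI$-case over $(\XI{1})^{\otimes c}$). The link is classical: for a fixed $n$ the rational function $H_{\M_n}(t)=\sum_j\dim_K[\M_n]_j\,t^j$ — which is the coefficient of $s^n$ in $H_\M(s,t)$ — satisfies $\dim\M_n=-\ord_{t=1}H_{\M_n}(t)$ and $\deg\M_n=\lim_{t\to1}(1-t)^{\dim\M_n}H_{\M_n}(t)$. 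First I would dispose of the trivial case: if $\M_n=0$ for $n\gg0$ there is nothing to prove, so I assume otherwise. A grading shift $\M\mapsto\M(-r)$ affects neither $\dim\M_n$ nor $\deg\M_n$, so I may assume $\M$ is generated in non-negative degrees and write $H_\M(s,t)=g(s,t)\big/\big[(1-t)^a\prod_{j=1}^b\big((1-t)^{c_j}-s f_j(t)\big)\big]$ as in \Cref{thm:shape Hilb f.g. OI-module}. Cancelling all powers of $(1-t)$ common to $g$ and to $(1-t)^a$ (allowing now $a\in\Z$), I may further assume $(1-t)\nmid g$; then, writing $g(s,t)=\sum_i g_i(t)\,s^i$, the integer $n_0:=\min\{\, i : g_i(1)\neq 0 \,\}$ is well defined. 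Since $\M_n\neq0$ for $n\gg0$ forces $b\ge1$, the set $J^*:=\{\, j : c_j=c^* \,\}$, where $c^*:=\max_j c_j$, is nonempty.

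Next I would expand $\big((1-t)^{c_j}-s f_j(t)\big)^{-1}=\sum_{k\ge0}f_j(t)^k(1-t)^{-c_j(k+1)}s^k$, multiply out, and read off the coefficient of $s^n$:
\[
H_{\M_n}(t)=(1-t)^{-a-\sum_j c_j}\sum_{k_0+k_1+\cdots+k_b=n} g_{k_0}(t)\,(1-t)^{-\sum_j c_j k_j}\prod_{j=1}^b f_j(t)^{k_j}.
\]
Because each $f_j(1)\neq0$, the pole order at $t=1$ of the $(k_0,\dots,k_b)$-summand is at most $a+\sum_j c_j+\sum_j c_j k_j$, with equality exactly when $g_{k_0}(1)\neq0$; moreover $\sum_j c_j k_j\le c^*(n-k_0)$, with equality exactly when $k_j=0$ for every $j$ with $c_j<c^*$. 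Maximising, the most negative power of $(1-t)$ occurring in $H_{\M_n}(t)$ is $-(c^*n+L_0)$ with $L_0:=a+\sum_j c_j-c^*n_0$, and its coefficient equals
\[
g_{n_0}(1)\cdot h_{n-n_0},\qquad h_m:=\sum_{\substack{k_j\ge0\ (j\in J^*)\\ \sum_{j\in J^*}k_j=m}}\ \prod_{j\in J^*}f_j(1)^{k_j}.
\]

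The step I expect to be the only real obstacle is to rule out that this nominal leading coefficient cancels. It cannot: by \Cref{thm:shape Hilb f.g. OI-module} every $f_j(1)$ is a \emph{positive integer}, so $h_m$ is a sum of positive integers over the nonempty index set determined by $J^*$, whence $h_m\ge1$; and $g_{n_0}(1)\neq0$ by the choice of $n_0$. Therefore the coefficient of $(1-t)^{-(c^*n+L_0)}$ in $H_{\M_n}(t)$ is nonzero for all $n\ge n_0$, which at once shows that $\M_n\neq0$ for $n\gg0$, that $\dim\M_n=c^*n+L_0$, and that $\deg\M_n=g_{n_0}(1)\,h_{n-n_0}$ for $n\gg0$. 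Setting $A:=c^*$ and $B:=L_0$ gives the asserted affine formula, with $0\le A\le c$ since $c_j\le c$ in \Cref{thm:shape Hilb f.g. OI-module}; in particular $\dim\M_n/n\to A$.

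Finally I would analyse $h_m$ as $m\to\infty$ via its generating function $\sum_m h_m x^m=\prod_{j\in J^*}(1-f_j(1)x)^{-1}$. A standard partial-fraction computation yields $h_m=\sum_\beta q_\beta(m)\,\beta^m$, the sum over the distinct values $\beta$ of $f_j(1)$ for $j\in J^*$, where $q_\beta\in\Q[X]$ has degree $e_\beta-1$ with $e_\beta:=\#\{\, j\in J^* : f_j(1)=\beta \,\}$; for $M:=\max_{j\in J^*}f_j(1)$ the leading coefficient of $q_M$ is $C:=\tfrac{1}{(e_M-1)!}\prod_{\beta\neq M}(1-\beta/M)^{-e_\beta}$, a \emph{positive} rational (each factor $1-\beta/M\in(0,1)$). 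Hence $h_m\sim C\,m^{\,e_M-1}M^m$, so with $L:=e_M-1\ge0$ one gets $\deg\M_n=g_{n_0}(1)\,h_{n-n_0}\sim\big(g_{n_0}(1)\,C\,M^{-n_0}\big)\,n^L M^n$; since $\deg\M_n>0$ and $h_{n-n_0}>0$, the integer $g_{n_0}(1)$ is positive, so $\deg\M_n/(M^n n^L)$ converges to the positive rational $g_{n_0}(1)\,C\,M^{-n_0}$. Taking $n$-th roots gives $\sqrt[n]{\deg\M_n}\to M$, and $M$ is a positive integer because every $f_j(1)$ is. The $\FI$-case is verbatim the same, invoking \Cref{cor:shape Hilb f.g. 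FI-module} in place of \Cref{thm:shape Hilb f.g. OI-module}.
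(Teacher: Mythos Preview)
Your overall strategy---extract $\dim\M_n$ and $\deg\M_n$ from the pole of $H_{\M_n}(t)$ at $t=1$ using the denominator shape of \Cref{thm:shape Hilb f.g. OI-module}---is exactly the route the paper takes (it defers the details to \cite[Theorem~7.10]{NR}). However, there is a genuine gap in your identification of the dominant pole. After arranging $(1-t)\nmid g$ you set $n_0=\min\{i:g_i(1)\neq0\}$ and assert that the maximal pole order among the summands equals $c^*n+L_0$, coming from $k_0=n_0$. But a summand with $k_0<n_0$, while it satisfies $g_{k_0}(1)=0$ and hence has pole order \emph{strictly smaller} than your upper bound $a+\sum_jc_j+\sum_jc_jk_j$ for that summand, can still exceed $c^*n+L_0$. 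Concretely, with $a=0$, $b=1$, $c_1=2$, $f_1=1$ and $g(s,t)=(1-t)+s$ one has $(1-t)\nmid g$, $n_0=1$, and your formula predicts pole order $2n$; yet $H_n(t)=(1-t)^{-(2n+1)}+(1-t)^{-2n}=(2-t)(1-t)^{-(2n+1)}$, so the true order is $2n+1$, driven by the $k_0=0$ term whose single factor of $(1-t)$ in $g_0$ is outweighed by the extra $c^*=2$ in the exponent.

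The obvious repair---replace $n_0$ by the index minimising $c^*i+\ord_{t=1}g_i$---does not rescue the non-cancellation step either: several indices may tie, with $\tilde g_i(1)$ of opposite signs, and the nominal leading coefficient can vanish. For instance $a=0$, $b=1$, $c_1=1$, $f_1(t)=1+(M-1)t$, $g(s,t)=-(1-t)+Ms$ gives $H_n(t)=(M-1)f_1(t)^{n-1}(1-t)^{-(n-1)}$, one order below the termwise maximum $n$. Thus your positivity argument (all $f_j(1)>0$, hence $h_m>0$) does not by itself control the leading term. A correct treatment requires a more careful expansion---e.g.\ passing to $u=1-t$ and performing a genuine partial-fraction decomposition in $s$ over $\Q((u))$, where the poles $s=u^{c_j}/f_j(1-u)$ have $u$-adic valuation exactly $c_j$---or else following \cite[Theorem~7.10]{NR} directly.
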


\begin{proof}
This follows  from \Cref{thm:shape Hilb f.g. OI-module} and \Cref{cor:shape Hilb f.g. FI-module} exactly as \cite[Theorem 7.10]{NR} whose proof uses the mentioned special cases of the former results. 
\end{proof}

\begin{ex}
    \label{exa:asymptotic degree} 
(i) If $\Ib$ is an ideal of $(\XO{1})^{\otimes c}$ that is generated by one monomial $u \neq 0$, then 
$\lim_{n \to \infty} \frac{\dim ((\XO{1})^{\otimes c}/\Ib)_n}{n} = c-1$ by \cite[Corollary 3.8]{GN}. Moreover, if $c = 1$ and $u = x_{i_1}^{a_1}  \cdots x_{i_r}^{a_r}$ as in \Cref{exa:ideal gen by one monomial}, then $ \lim_{n \to \infty} \sqrt[n]{\deg (\XO{1}/\Ib)_n} = \max \{a_1,\ldots,a_r\}$ (see \cite[Corollary 3.8]{GN} for the general case $c \ge 1$). 

(ii) For an arbitrary monomial ideal $\Ib$ of $(\XO{1})^{\otimes c}$, the limit $\lim_{n \to \infty} \frac{\dim ((\XO{1})^{\otimes c}/\Ib)_n}{n}$ has been explicitly determined in \cite[Theorem 3.8]{LNNR2}.

(iii) Fix an integer $k \ge 1$ and let $\Ib$ be the $\OI$-ideal of $\Pb = \XO{1}$ that is generated in width $k+1$ by $\prod_{1 \le i <  j \le k+1} (x_i - x_j)$, that is, by the determinant  of the $(k+1) \times (k+1)$ Vandermonde matrix 
\[
\begin{bmatrix}
1 & 1  & \ldots & 1 \\
x_{1} & x_{2} & \ldots & x_{k+1}\\
x_{1}^{2} & x_{2}^{2} & \ldots & x_{k+1}^{2}
\vdots \\
x_{1}^{k} & x_{2}^{k} & \ldots & x_{k+1}^{k}
\end{bmatrix}. 
\]
For $n \ge k+1$, the ideal $\Ib_n$ is called a \emph{Vandermonde ideal} in \cite{WY}. These ideals are special cases of \emph{Specht ideals},  as introduced in \cite{Y} and further studied in \cite{SY}. Note that Vandermonde ideals are special cases of the ideals discussed in the introduction, where $\lambda = (k,k-1,\ldots,1)$ and $x_{i, j} = x_{1, j}$ for $1 \le i \le c =k$. In \cite[Theorem 1.1]{WY}, it is shown that $\Pb_n/\Ib_n$ is a reduced Cohen-Macaulay ring of dimension $k$ whose degree is given by the Stirling number $S(n, k)$ of the second kind. It follows that $ \lim_{n \to \infty} \sqrt[n]{\deg \Pb_n/\Ib_n} = k$.  
\end{ex}

The first part of \Cref{thm:growth invariants}  is also true for modules that are not necessarily graded. 

\begin{thm}
     \label{thm:growth of dim} 
Let $\M$ be a finitely generated  $\OI$-module over $(\XO{1})^{\otimes c}$ or a finitely generated $\FI$-module over $(\XI{1})^{\otimes c}$. Then there are integers $A, B$ such that, for all $n \gg 0$,
\[
\dim \M_n = A n + B. 
\]
\end{thm}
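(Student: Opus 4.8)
The plan is to reduce this to the graded case, \Cref{thm:growth invariants}, by replacing $\M$ with a monomial $\OI$-module having the same width-wise Krull dimensions. First I would dispose of the $\FI$-case: by \cite[Remark 3.17]{NR2} a finitely generated $\FI$-module over $(\XI{1})^{\otimes c}$ is also finitely generated as an $\OI$-module over $(\XO{1})^{\otimes c}$, with the same underlying $\Pb_n$-modules $\M_n$, so it suffices to treat a finitely generated $\OI$-module $\M$ over $\Pb = (\XO{1})^{\otimes c}$. Choosing a surjection $\Fb = \bigoplus_{i=1}^k \Fo{d_i} \twoheadrightarrow \M$ with kernel $\Nb$, one has $\M \cong \Fb/\Nb$ and $\M_n \cong \Fb_n/\Nb_n$ as $\Pb_n$-modules for every $n$; note that here $\Nb$ need not be homogeneous, but this causes no difficulty since $\M$ carries no grading.

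Next I would fix a monomial order $<$ on $\Fb$ of the type constructed in \Cref{exa:mon-order}, so that it induces a monomial order $\preceq$ on each $\Pb_n$-module $\Fb_n$. By \Cref{thm:finite-G-basis} (equivalently, by noetherianity) the initial module $\ini(\Nb) = \ini_{<}(\Nb)$ is a finitely generated monomial $\OI$-submodule of $\Fb$; hence $\Fb/\ini(\Nb)$ is a finitely generated graded $\OI$-module over $\Pb$, generated in degree zero, and \Cref{thm:growth invariants} applies to it, yielding integers $A,B$ with $\dim (\Fb/\ini(\Nb))_n = An+B$ for all $n \gg 0$. It then remains to match $\dim \M_n$ with $\dim (\Fb/\ini(\Nb))_n$. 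For this I would invoke \Cref{lem:compare initial modules}, which gives $\ini(\Nb)_n = \ini_{\preceq}(\Nb_n)$ for every $n$, together with the standard fact from Gr\"obner theory that passing to an initial submodule preserves Krull dimension: for a submodule $N$ of a finitely generated free module $F$ over a polynomial ring over a field and any monomial order, $F/N$ and $F/\ini(N)$ have the same Krull dimension (after replacing the term order by a compatible weight vector on a Gr\"obner basis of $N$, one realizes $F/\ini(N)$ as the associated graded module of $F/N$ for the corresponding filtration, and an associated graded module has the same dimension as the module it comes from). Applying this with $F = \Fb_n$ and $N = \Nb_n$ gives
\[
\dim \M_n = \dim \Fb_n/\Nb_n = \dim \Fb_n/\ini_{\preceq}(\Nb_n) = \dim \Fb_n/\ini(\Nb)_n = \dim (\Fb/\ini(\Nb))_n,
\]
and combining with the previous sentence yields $\dim \M_n = An+B$ for $n \gg 0$, as claimed.

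The step I expect to be the main obstacle is the dimension-preservation under taking initial submodules in the non-graded (affine) setting; everything else is routine bookkeeping with the Gr\"obner machinery already developed in \Cref{sec:general case}. It is worth emphasizing that this argument uses \Cref{thm:growth invariants} only as a black box on the monomial module $\Fb/\ini(\Nb)$, so no fresh computation with equivariant Hilbert series is required.
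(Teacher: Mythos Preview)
Your proposal is correct and follows essentially the same route as the paper: present $\M$ as $\Fb/\Nb$, pass to the initial module via \Cref{lem:compare initial modules}, use that Krull dimension is preserved under taking initial submodules, and then apply \Cref{thm:growth invariants} to the monomial quotient $\Fb/\ini(\Nb)$. The paper handles the dimension-preservation step by citing \cite[Proposition 3.1(a)]{BC} (extended from ideals to submodules of a free module), whereas you sketch the standard weight-vector/associated-graded argument; these amount to the same thing.
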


\begin{proof}
Present $\M$ as a quotient $\Fb/\Nb$ of a finitely generated free $\OI$-module $\Fb$. Using a suitable monomial order on $\Fb$, \Cref{lem:compare initial modules} gives $\ini (\Nb)_n = \ini_{\preceq} (\Nb_n)$. Hence, extending the arguments of \cite[Proposition 3.1(a)]{BC} from ideals 
to submodules of a free module, one gets 
$\dim \M_n=\dim (\Fb_n/\ini_{\preceq} (\Nb_n))$ for every $n$. Now 
the result follows from \Cref{thm:growth invariants}.
\end{proof} 


As preparation for the next result, the following observation will be useful. 

\begin{lem}
     \label{lem:powers of pol}
Let $f (t) \in \Z[t]$ be a polynomial with $f(0) \in \{0, 1\}$. For each  $n \in \N_0$, define integers $g_j (n)$ 
by 
\[
f(t)^n = \sum_{j = 0}^{n \cdot \deg f} g_j (n) t^j. 
\]
Thus, $g_j (n) = 0$ if $j < 0$ or $j > n \cdot deg f$. 
Then, for any fixed integer $j$, the number $g_j (n)$ is given by a polynomial in $n$ with rational coefficients whenever $n$ is sufficiently large. 
\end{lem}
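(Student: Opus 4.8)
The plan is to extract the coefficient $g_j(n)$ of $t^j$ in $f(t)^n$ and show it is eventually polynomial in $n$ by a direct multinomial expansion. Write $f(t) = \sum_{i=0}^{D} a_i t^i$ with $D = \deg f$, and first treat the case $f(0) = a_0 = 1$. Then, expanding $f(t)^n = (1 + a_1 t + \cdots + a_D t^D)^n$ by the multinomial theorem, the coefficient of $t^j$ is
\[
g_j(n) = \sum_{\substack{(k_1,\ldots,k_D) \in \N_0^D \\ k_1 + 2 k_2 + \cdots + D k_D = j}} \binom{n}{n - (k_1 + \cdots + k_D),\, k_1,\, \ldots,\, k_D} \, a_1^{k_1} \cdots a_D^{k_D},
\]
where the multinomial coefficient equals $\binom{n}{k_1 + \cdots + k_D}\binom{k_1 + \cdots + k_D}{k_1, \ldots, k_D}$. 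Here the key point is that for fixed $j$ there are only finitely many index tuples $(k_1,\ldots,k_D)$ satisfying the weighted sum constraint, since each $k_i \le j$; hence the sum is finite and independent of $n$ in its index set. For each such tuple, setting $k = k_1 + \cdots + k_D$ (a fixed non-negative integer, independent of $n$), the factor $\binom{n}{k} = \frac{n(n-1)\cdots(n-k+1)}{k!}$ is a polynomial in $n$ of degree $k$ with rational coefficients, while $\binom{k}{k_1,\ldots,k_D} a_1^{k_1}\cdots a_D^{k_D}$ is a constant integer. So $g_j(n)$ is a finite $\Z$-linear (indeed $\Q$-linear) combination of polynomials $\binom{n}{k}$ in $n$, hence itself a polynomial in $n$ with rational coefficients; this identity is valid for all $n \ge j$ (so that $n - k \ge 0$ for every contributing tuple, and the multinomial expansion is literally correct), which gives the conclusion for $n \gg 0$.

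For the remaining case $f(0) = a_0 = 0$, factor $f(t) = t^{e} h(t)$ where $e \ge 1$ is the order of vanishing of $f$ at $0$ and $h(0) \ne 0$. If $f$ is the zero polynomial the statement is trivial, so assume $h \ne 0$; since $f \in \Z[t]$, the constant term $h(0)$ is a nonzero integer. We need not have $h(0) = 1$, so I will instead absorb the issue by noting $f(t)^n = t^{en} h(t)^n$, so that $g_j(n)$ equals the coefficient of $t^{j - en}$ in $h(t)^n$. Writing $\tilde g_{j'}(n)$ for the coefficient of $t^{j'}$ in $h(t)^n$, we have $g_j(n) = \tilde g_{j - en}(n)$, which is $0$ as soon as $j - en < 0$, i.e. for all $n > j/e$; thus for fixed $j$ and $n \gg 0$ we get $g_j(n) = 0$, which is trivially polynomial. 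This disposes of the case $a_0 = 0$ without needing to normalize $h$.

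The only genuinely delicate point is verifying that the index set of the multinomial sum is finite and stable in $n$, and that the expansion identity $f(t)^n = \sum \binom{n}{k_1+\cdots+k_D, k_1, \ldots, k_D}\prod a_i^{k_i} t^{\sum i k_i}$ holds with the usual convention that a multinomial coefficient with a negative entry is $0$ — this forces the restriction $n \ge j$ (equivalently $n \ge k$ for all contributing $k$) for the coefficient extraction to be literally the displayed finite sum, which is exactly why the conclusion is asserted only for large $n$. Everything else is the routine observation that $\binom{n}{k}$, for fixed $k \in \N_0$, is a polynomial in $n$ over $\Q$, and that finite $\Q$-linear combinations of such polynomials are again polynomials in $n$ over $\Q$. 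I expect no real obstacle; the proof is short and elementary.
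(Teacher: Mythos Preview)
Your proof is correct and follows essentially the same approach as the paper: both expand $f(t)^n$ by the multinomial theorem, observe that for fixed $j$ the index set $\{(k_1,\ldots,k_D):\sum i k_i = j\}$ is finite and independent of $n$, and use that each multinomial coefficient $\binom{n}{k_1+\cdots+k_D}\binom{k_1+\cdots+k_D}{k_1,\ldots,k_D}$ is a polynomial in $n$. The only cosmetic difference is that the paper handles $a_0\in\{0,1\}$ uniformly by noting that $a_0^{\,n-(k_1+\cdots+k_D)}$ is eventually constant in $n$, whereas you split off the case $a_0=0$ and argue directly that $g_j(n)=0$ for $n>j/e$; both treatments are equally short and valid.
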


\begin{proof} Let $f(t) = a_0 + a_1 t + \cdots + a_D t^D$. Using multi-index notation, for $k = (k_0,\ldots,k_D) \in \N_0^{D+1}$, we write $a^k = a_0^{k_0} \cdots a_D^{k_D}$ and $\binom{n}{k}$ for the multinomial coefficient 
\[
\binom{n}{k_0,\ldots,k_D} = \frac{n!}{k_0! \cdots k_D!} 
\]
with $|k| = k_0 + \cdots + k_D = n$. Thus, we get
\begin{equation}
      \label{eq:power of pol}
f(t)^n = \sum_{|k| = n} \binom{n}{k} a^k t^{k_1 + 2 k_2 + \cdots + D k_D}. 
\end{equation}
Consider any fixed integer $j \ge 0$. Observe that the number of tuples $(k_1,\ldots,k_D) \in \N_0^D$ with $k_1 + 2 k_2 + \cdots + D k_D = j$ is finite and independent of $n$. Hence, for any $n \gg 0$,  the coefficient of $t^j$ on the right-hand side of \Cref{eq:power of pol}, that is, $g_j (n)$, is a finite sum over these $D$-tuples  with summands of the form 
\[
 \frac{n!}{(n-(k_1 + \cdots + k_D))! \ k_1! \cdots k_D!} a_0^{n-(k_1 + \cdots + k_D)}  a_1^{k_1}\cdots a_D^{k_D}
\]
For any  fixed $(k_1,\ldots,k_D)$ contributing to the sum, the above multinomial coefficient is a polynomial in $n$ of degree $k_1 + \cdots + k_D$, which is bounded above by $j$, independent of $n$. Moreover, by assumption, we know $a_0 \in \{0, 1\}$. Hence, $a_0^{n-(k_1 + \cdots + k_D)}  a_1^{k_1}\cdots a_D^{k_D}$ does not depend on $n$, and our assertion follows. 
\end{proof}

Despite the exponential growth of the degrees of the modules $\M_n$, the vector space dimensions 
of their graded components of any fixed degree grow only polynomially. 

\begin{thm}
     \label{thm:polynomiality in fixed degree} 
Let $\M$ be a finitely generated graded $\OI$-module over $(\XO{1})^{\otimes c}$ or a finitely generated graded $\FI$-module over $(\XI{1})^{\otimes c}$. If $j$ is any fixed integer, then $\dim_K [\M_n]_j$ is given by a polynomial in $n$ with rational coefficients whenever $n$ is sufficiently large. 

More precisely, there are integers $a_1,\ldots,a_D$ (depending on $j$) such that, for any $n \gg 0$, 
\[
\dim_K [\M_n]_j = a_D \binom{n+D-1}{D-1} + \cdots + a_2 \binom{n+1}{1} + a_1,  
\]
where $a_D > 0$ unless $[\M_n]_j = 0$ for $n \gg 0$. 
\end{thm}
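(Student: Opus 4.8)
The plan is to reduce the statement to the structure of the equivariant Hilbert series established in \Cref{thm:shape Hilb f.g. OI-module} (for the $\OI$-case) and \Cref{cor:shape Hilb f.g. FI-module} (for the $\FI$-case, via the same reduction used there), and then to extract the coefficient of $t^j$ as a function of $n$. First I would invoke \Cref{rem:degree shifts Hilb}(ii): up to multiplying the equivariant Hilbert series by a fixed power of $t$ (which only shifts the internal degree $j$ by a constant and does not affect the shape of the conclusion), we may assume $\M$ is generated in non-negative degrees, so that
\[
H_{\M}(s,t) = \frac{g(s,t)}{(1-t)^a \cdot \prod_{j=1}^b \bigl[(1-t)^{c_j} - s\, f_j(t)\bigr]},
\]
with $g(s,t) \in \Z[s,t]$, each $c_j \le c$, $f_j(0) = 1$ and $f_j(1) > 0$. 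The goal is then to show that, for fixed $j$, the coefficient of $t^j$ in this power series, viewed as a power series in $s$ with coefficients in $\Z[[t]]$, is eventually a polynomial in $n$ of the asserted form.

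The key step is a partial-fraction-type expansion of $H_{\M}(s,t)$ as a power series in $s$ over the field $\Q(t)$ (or better, to avoid denominators in $t$, by a direct expansion). Write each factor in the denominator as $(1-t)^{c_j} - s f_j(t) = (1-t)^{c_j}\bigl(1 - s\, \tilde f_j(t)\bigr)$ where $\tilde f_j(t) = f_j(t)/(1-t)^{c_j}$, so that
\[
\frac{1}{(1-t)^{c_j} - s f_j(t)} = \frac{1}{(1-t)^{c_j}} \sum_{n \ge 0} s^n \tilde f_j(t)^n.
\]
Expanding the full product and writing $H_{\M}(s,t) = \sum_{n \ge 0} h_n(t)\, s^n$, each $h_n(t)$ is, after clearing the $(1-t)$-powers, a $\Z$-linear combination (with coefficients that are themselves polynomials in $n$ coming from the composition/convolution of several geometric-type series — these are the multinomial-coefficient contributions) of products of powers $f_{j_1}(t)^{m_1}\cdots f_{j_r}(t)^{m_r}$ with $m_1 + \cdots + m_r$ growing linearly in $n$, all divided by a fixed power $(1-t)^{a + c_1 + \cdots + c_b}$ and multiplied by the fixed numerator $g(s,t)$. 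Extracting the coefficient of $t^j$ from $h_n(t)$ then requires understanding, for a fixed $j$, the coefficient of $t^j$ in $\dfrac{g(s,t)\, \prod_i f_i(t)^{m_i}}{(1-t)^{N}}$ for $N$ fixed and the $m_i$'s ranging over tuples with linearly-in-$n$ bounded sum. Since $1/(1-t)^N = \sum_{\ell \ge 0}\binom{\ell+N-1}{N-1} t^\ell$ has polynomial-in-$\ell$ coefficients, and since $g(s,t)$ and the $f_i$ are fixed polynomials with $f_i(0)=1$, the coefficient of $t^j$ only ever involves the bottom $j+1$ coefficients of each power $f_i(t)^{m_i}$ — and by \Cref{lem:powers of pol} each such coefficient is, for $m_i \gg 0$, a polynomial in $m_i$. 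Combining this with the polynomial-in-$n$ nature of the multinomial weights and the polynomial-in-$\ell$ coefficients of $1/(1-t)^N$, and summing over the (for fixed $j$) boundedly many relevant index tuples, one concludes that $\dim_K[\M_n]_j$ agrees with a polynomial in $n$ for $n \gg 0$.

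It remains to pin down the precise form $a_D\binom{n+D-1}{D-1} + \cdots + a_1$ with integer coefficients $a_i$ and $a_D > 0$. Integrality and the binomial-basis expression follow because $\dim_K[\M_n]_j$ is a non-negative integer for every $n$, and any integer-valued polynomial in $n$ can be written uniquely in the binomial basis $\binom{n+i-1}{i-1}$ with integer coefficients; eventual agreement with such a polynomial forces that expression. For the leading coefficient, if $[\M_n]_j \ne 0$ for infinitely many $n$ then, by the eventual polynomiality just proved, $[\M_n]_j \ne 0$ for all $n \gg 0$, hence the polynomial is not identically zero and its value is a positive integer for $n \gg 0$, which forces the top coefficient $a_D$ (in the binomial basis) to be positive; conversely if $[\M_n]_j = 0$ for $n \gg 0$ there is nothing to prove. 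I expect the main obstacle to be bookkeeping: carefully organizing the expansion of the product over $j = 1,\dots,b$ so that the polynomial-in-$n$ combinatorial weights (multinomial coefficients from convolving $b$ geometric-type series, cf.\ the binomial series in \Cref{prop:hilb free OI-mod}) are cleanly separated from the "low-degree-in-$t$" content governed by \Cref{lem:powers of pol}, and making sure the reduction to finitely many relevant index tuples for a fixed $j$ is airtight. The analytic/algebraic core is light — it is essentially \Cref{lem:powers of pol} plus the observation that coefficients of $1/(1-t)^N$ are polynomial — but the combinatorial organization is where the care is needed. An alternative, perhaps cleaner route for the bookkeeping would be to argue on the rational function $\sum_{n} \dim_K[\M_n]_j\, s^n$ directly: one shows it is a rational function of $s$ whose denominator is a product of factors $(1-s)$, using that the $t$-degree is truncated at $j$ so only finitely many $s$-monomials of each $f_i(t)^{m_i}/(1-t)^N$ survive; a rational function in $s$ with denominator a power of $(1-s)$ has eventually-polynomial coefficients, and the binomial-basis form is immediate. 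I would likely present the convolution argument as the main line and remark on this alternative.
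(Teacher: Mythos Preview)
Your approach is essentially the paper's: reduce to non-negative generator degrees, expand each denominator factor as a geometric series in $s$, invoke \Cref{lem:powers of pol} for the coefficients of $f_j(t)^n$, and use the binomial expansion of powers of $(1-t)^{-1}$. The paper also deduces the binomial-basis statement by citing a standard fact (\cite[Lemma~4.1.4]{BH}), just as you do.

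There is, however, a concrete error in your bookkeeping. You write that after expanding, $h_n(t)$ is a combination of products $\prod_i f_i(t)^{m_i}$ ``all divided by a fixed power $(1-t)^{a+c_1+\cdots+c_b}$''. This is false: from your own expansion
\[
\frac{1}{(1-t)^{c_j}-s f_j(t)} = \frac{1}{(1-t)^{c_j}}\sum_{m\ge 0} s^m\,\frac{f_j(t)^m}{(1-t)^{c_j m}},
\]
the coefficient of $s^n$ in the product carries a denominator $(1-t)^{\sum_j c_j(m_j+1)}$, whose exponent depends on the $m_j$ and hence on $n$. Consequently your appeal to ``$1/(1-t)^N$ has polynomial-in-$\ell$ coefficients'' is the wrong direction: $N$ is not fixed here, the internal degree $\ell$ is. What you actually need (and what the paper uses) is that for \emph{fixed} $\ell$ the coefficient $\binom{\ell+N-1}{\ell}$ of $t^\ell$ in $1/(1-t)^N$ is a polynomial in $N$; in the paper's single-factor computation this appears as $\binom{j-k+en-1}{j-k}$, polynomial in $n$ of degree $j-k$. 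Once you make this correction, the rest of your convolution argument goes through and matches the paper's proof. Your ``alternative route'' (showing $\sum_n \dim_K[\M_n]_j\, s^n$ has denominator a power of $1-s$) would require the same observation to justify, so it is not really a shortcut.
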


\begin{proof}
The second claim follows from the first one (see, e.g., \cite[Lemma 4.1.4]{BH}). 

To show the first claim, let $\M$ be finitely generated graded $\OI$-module. Possibly after applying a degree shift we may assume that $\M$ is generated in non-negative degrees. Thus, \Cref{thm:shape Hilb f.g. OI-module} applies to $\M$.  

Let $e \ge 0$ be an integer and let  $f(t) \in \Z[t]$ be a polynomial with $f(0) = 1$. 
We claim that the rational function ${\displaystyle  q(s, t) = \frac{(1-t)^e}{(1-t)^e - s \cdot f (t)}}$ can be written as 
\[
q (s, t) = \sum_{n \ge 0, j \ge 0} q_j (n) s^n t^j 
\]
with integers $q_j (n)$ that, for any fixed $j$, are given by a polynomial in $\Q[n]$ whenever $n \gg 0$. Indeed, using binomial series twice we compute
\begin{align*}
q (s, t) & = \frac{1}{1 - s \cdot \frac{ f (t)}{(1-t)^e}} = \sum_{n \ge 0}\frac{ f(t)^n}{(1-t)^{ en }} s^n \\[.5em]
& = \sum_{n \ge 0} f(t)^n \cdot \left [ \sum_{k \ge 0} \binom{k + e n -1}{k} t^k \right ] s^n.  
\end{align*}
Writing $f(t)^n = \sum_{j = 0}^{n \cdot \deg f} g_j (n) t^j$ as in \Cref{lem:powers of pol}, we get for $n \gg 0$, 
\[
q_j (n) = \sum_{k = 0}^j g_{k} (n) \binom{j-k + en -1}{j-k} 
\]
with integers $g_k (n)$ that, for fixed $k$, are eventually polynomial in $n$ by \Cref{lem:powers of pol}. Since $\binom{j-k + en -1}{j-k}$ is a polynomial in $n$ of degree $j-k \le j$, the claimed polynomial growth of $q_j (n)$ follows. 

Consider now a product of rational functions as studied in the above claim with integers $c_1,\ldots,c_b \ge 0$ and polynomials $f_1 (t),\ldots,f_b (t) \in \Z[t]$ with $f_j (0) = 1$. 
Applying the claim to each factor, we obtain 
\[
\prod_{j = 1}^b \frac{(1-t)^{c_j}}{(1-t)^{c_j} - s \cdot f _j(t)} =  \sum_{n \ge 0, k \ge 0} h_k (n) s^n t^k
\]
with integers $h_k (n)$ that, for any fixed $k$ grow eventually polynomially in $n$. By \Cref{thm:shape Hilb f.g. OI-module}, the equivariant Hilbert series of $\M$ differs from the above product only by a factor of $(1-t)^a g(s, t)$ for some integer $a$ and some $g(s, t) \in \Z[s, t]$, which implies the assertion about the growth of $\dim_K [\M_n]_j$ as a function in $n$. 

Finally, if $\M$ is a finitely generated graded $\FI$-module the claim follows as above by using \Cref{cor:shape Hilb f.g. FI-module} instead of \Cref{thm:shape Hilb f.g. OI-module}. 
\end{proof}


\section{Artinian Modules}
    \label{sec:artinian mod}

In this section we introduce Artinian graded $\OI$- or $\FI$-modules. For such modules, we strengthen some of the previous results.  We also derive consequences for  graded Betti numbers of the $\Pb_n$-modules $\M_n$. 

Recall that, for some fixed integer $c \ge 1$, we are considering $\OI$-modules over $\Pb = (\XO{1})^{\otimes c}$  or $\FI$-modules over $\Pb = (\XI{1})^{\otimes c}$. Throughout this section $K$ denotes any field. 

Classically, an Artinian module is defined by the stabilization of any descending sequence of submodules. The following example indicates that this is too restrictive for $\OI$-modules. 

\begin{ex}
     \label{exa:artinian of unbounded length} 
For $c = 1$ and some integer $a \ge 2$, let $\Ib \subset \Pb = \XO{1}$ be the ideal generated by $x_1^a$. Thus $\Ib_n = (x_1^a,\ldots,x_n^a) \subset K[x_1,\ldots,x_n] = \Pb_n$. 
Note that, for each $k \ge 1$, the monomial $x_1^{a-1} \cdots x_k^{a-1}$ is not in $\Ib_n$ for every $ \in \N$. For any $k \in \N$, let $\Jb^{(k)} \subset \Pb = \XO{1}$  be  the ideal that is generated in width $k$ by $x_1^{a-1} \cdots x_k^{a-1}$. This gives an infinite descending chain 
\[
\Pb/\Ib =  (\Jb^{(1)} + \Ib)/\Ib \supsetneq (\Jb^{(2)} + \Ib)/\Ib \supsetneq \cdots  
\]
that does not stabilize. Observe however that each of the modules $\Pb_n/\Ib_n$ is Artinian. 
\end{ex} 

In order to capture asymptotic properties of the modules $\M_n$ for large $n$, we propose. 

\begin{defn}
     \label{def:artinian} 
An $\OI$- or $\FI$-module $\M$ over $\Pb$ is said to be \emph{Artinian} if there is an integer $n_0$ such that  every $\Pb_n$-module $\M_n$ is Artinian whenever $n \ge n_0$. 
\end{defn}

Graded Artinian modules can be characterized by their  equivariant Hilbert series. 

\begin{thm}
     \label{thm:hilb artinian}
Let $\M$ be a  finitely generated graded $\OI$-module over $(\XO{1})^{\otimes c}$ or a finitely generated graded $\FI$-module over $(\XI{1})^{\otimes c}$. If $\M$ is  generated in non-negative degrees, then the following two conditions are equivalent: 
\begin{itemize}

\item[(i)] $\M$ is Artinian. 

\item[(ii)] The equivariant Hilbert series of $\M$ is of the form
\[
H_{\M} (s, t) = \tilde{g} (s, t) +   \frac{h(s, t)}{[f_1 (t) \cdots f_b (t)]^e \cdot \prod_{j =1}^b [1 - s \cdot f_j (t)]},    
\]
where $b, e$ are non-negative integers, $\tilde{g} (s, t)  \in \Q(t)[s]$,  $h (s, t) \in \Z[s, t]$ is not divisible by $1-t$ and  has degree less than $b$ considered as a polynomial in $s$ (and so $h (s, t) = 0$ if $b = 0$), and each $f_j (t)$ is a polynomial in $\Z[t]$ satisfying $f_j (1) >  0$ and $f_j (0) = 1$.
\end{itemize}

\end{thm}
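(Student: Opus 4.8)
The plan is to characterize the Artinian condition degree-by-degree and then feed this into the structural information already available for equivariant Hilbert series. The starting point is the classical fact: a finitely generated graded module $N$ over a polynomial ring $R$ in finitely many variables is Artinian if and only if it has finite length, equivalently its Hilbert series $H_N(t)$ is a polynomial in $t$, equivalently $\dim_K N < \infty$. Thus $\M$ is Artinian precisely when there is an $n_0$ such that $\dim_K \M_n < \infty$ for all $n \ge n_0$; equivalently, for $n \gg 0$ there is a uniform bound $D(n)$ with $[\M_n]_j = 0$ for $j > D(n)$. The first step is to translate condition (ii) into exactly this property of the coefficients $\dim_K [\M_n]_j$, using the binomial-series expansions already carried out in the proof of \Cref{thm:polynomiality in fixed degree}.

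For the implication (ii)$\Rightarrow$(i): write $H_{\M}(s,t)$ in the stated form. The summand $\tilde g(s,t) \in \Q(t)[s]$ is a polynomial in $s$ with coefficients rational functions in $t$; expanding each such coefficient as a power series in $t$ contributes, for each fixed $n$, only finitely many $s^n$-terms, and since $\M$ is generated in non-negative degrees and $\tilde g$ arises (as in the proof of \Cref{thm:shape Hilb f.g. OI-module}) from a genuine quotient of a free module, these coefficients are themselves polynomials in $t$ — hence finite length in each width. The more delicate part is the second summand. Clearing the denominator $[f_1\cdots f_b]^e$, whose constant term is $1$ (since each $f_j(0)=1$), one sees it contributes a factor $1/[f_1\cdots f_b]^e = \sum_{k\ge 0} c_k t^k$, a power series in $t$ alone, while $h(s,t)/\prod_j[1 - s f_j(t)]$ expands, via geometric series in $s$, as $\sum_n s^n (\text{polynomial in } t)$ — the key being that $\prod_j (1 - s f_j(t))$ is a \emph{polynomial} in $s$ of degree $b$ with constant term $1$ (in the ring $\Z[t][[s]]$), so its inverse has $s^n$-coefficient a polynomial in $t$ of degree at most $n\cdot\max_j\deg f_j$, and multiplying by $h(s,t)$ (degree $<b$ in $s$) keeps each $s^n$-coefficient polynomial in $t$. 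Multiplying the two power series, the $s^n$-coefficient of the whole fraction is $\sum_{k} c_k t^k \cdot (\text{that polynomial})$, which need \emph{not} be a polynomial in $t$ in general — so here is where the hypothesis that $h(s,t)$ is \emph{not divisible by} $1-t$ and the precise shape of the denominator must be used: I would argue that the pole structure forces, for each $n\gg 0$, the $t$-expansion of $[H_{\M}]_{s^n}$ to terminate, i.e. $\dim_K[\M_n]_j = 0$ for $j > D(n)$ for a suitable $D(n)$ (one expects $D(n)$ linear in $n$). Concretely I would compare with \Cref{thm:shape Hilb f.g. OI-module}: condition (ii) says the only denominator factors of the form $[(1-t)^{c_j} - s f_j(t)]$ that occur have $c_j = 0$ (they are $1 - s f_j(t)$), and the power of $(1-t)$ in the denominator is "cancelled" by the requirement that $1-t \nmid h$ — so $H_{\M}(s,t)$ has no pole along $t = 1$ other than what is absorbed into $\tilde g$, which is exactly the statement that $\dim_K\M_n$ stabilizes to a finite value for $n \gg 0$.

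For the converse (i)$\Rightarrow$(ii): assume $\M$ is Artinian, so $\dim_K \M_n < \infty$ for $n \ge n_0$, meaning $H_{\M_n}(t) \in \Z[t]$ for such $n$. By \Cref{thm:shape Hilb f.g. OI-module} (and \Cref{cor:shape Hilb f.g. FI-module} in the $\FI$-case) we already know
\[
H_{\M}(s,t) = \frac{g(s,t)}{(1-t)^a \cdot \prod_{j=1}^b [(1-t)^{c_j} - s f_j(t)]}
\]
with $f_j(1)>0$, $f_j(0)=1$. The task is to show Artinian-ness forces every $c_j = 0$ and forces the factor $(1-t)^a$ to be absorbable into a $\Q(t)[s]$-polynomial summand. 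I would split off the finitely many low-width terms $\sum_{n < n_0} H_{\M_n}(t) s^n$ into $\tilde g(s,t)$, and analyze the tail $\sum_{n \ge n_0} H_{\M_n}(t) s^n$: each coefficient is a polynomial in $t$, so the tail lies in $\Z[t][[s]]$. A rational function in $\Q(s,t)$ whose $s$-expansion has all coefficients in $\Z[t]$ — in particular no pole develops along $t=1$ as $s\to$ the relevant root — can only have denominator factors of the form $1 - s f_j(t)$ (up to the $(1-t)$-power, which must then be matched by divisibility of the numerator), because a factor $(1-t)^{c_j} - s f_j(t)$ with $c_j \ge 1$ would, upon geometric expansion $\sum_n s^n f_j(t)^n/(1-t)^{c_j n}$, introduce unbounded poles at $t=1$ into the coefficients. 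This is the step I expect to be the main obstacle: making rigorous the passage from "the $s$-coefficients are polynomials in $t$" to "the denominator, in reduced form, is a product of $1-sf_j(t)$'s times a power of $1-t$ cancelled by the numerator," i.e. a clean statement about which rational functions in $\Q(s,t)$ have $t$-polynomial $s$-coefficients. One route is a partial-fraction decomposition of $H_{\M}(s,t)$ over $\Q(t)$ with respect to $s$: the genuinely non-polynomial (in $s$) part is a sum of terms $\frac{r_{i}(t)}{((1-t)^{c_i} - s f_i(t))^{m_i}}$; requiring each $s^n$-coefficient to be a polynomial in $t$ forces $c_i = 0$, and the remaining $(1-t)$-denominator must then be cleared, yielding exactly the form in (ii) with $e = \max m_i$ and $h(s,t)$ of $s$-degree $< b$, not divisible by $1-t$. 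Assembling these pieces gives the equivalence.
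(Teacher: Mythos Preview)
Your overall strategy matches the paper's: show that Artinianness forces all $c_j=0$ in the denominator of \Cref{thm:shape Hilb f.g. OI-module}, then argue the residual $(1-t)^a$ must disappear. But two places are weaker than what the paper does.

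For (ii)$\Rightarrow$(i), you are making this harder than necessary. Once you expand $h(s,t)/\prod_j[1-sf_j(t)]=\sum_n h_n(t)s^n$ with $h_n\in\Z[t]$, you have $H_{\M_n}(t)=h_n(t)/r(t)^e$ for $n\gg 0$, where $r(t)=f_1(t)\cdots f_b(t)$. You do \emph{not} need $H_{\M_n}(t)$ to be visibly a polynomial; you only need to observe that $r(1)\neq 0$, so this rational function has no pole at $t=1$. Since the Hilbert series of a finitely generated graded module over a polynomial ring can only have a pole at $t=1$, the Krull dimension of $\M_n$ is zero. Your detour through ``the pole structure forces the $t$-expansion to terminate'' is not needed.

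For (i)$\Rightarrow$(ii), after getting $c_j=0$ (your pole-growth sketch is correct in spirit; the paper simply cites the dimension computation behind \Cref{thm:growth invariants}, which gives $\dim\M_n=(\max_j c_j)\,n+O(1)$), the real issue is the $(1-t)^a$ factor, and here your proposal has a gap. The paper does \emph{not} use partial fractions. Instead it performs division with remainder of $r(t)^e g(s,t)$ by $\prod_j[1-sf_j(t)]$ in $\Z[t][s]$ to write
\[
H_{\M}(s,t)=\tilde g(s,t)+\frac{h(s,t)}{r(t)^e(1-t)^{a-l}\prod_j[1-sf_j(t)]}
\]
with $(1-t)\nmid h$ and $\deg_s h<b$. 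The crux is a separate lemma (\Cref{lem: t = 1}): if $a_1,\ldots,a_b\ge 1$ are real and $p(s)\in\R[s]$ is nonzero of degree $<b$, then the power-series coefficients of $p(s)/\prod_j(1-a_js)$ are eventually nonzero. Applying this with $a_j=f_j(1)\ge 1$ and $p(s)=h(s,1)\neq 0$ gives $h_n(1)\neq 0$ for $n\gg 0$; since $H_{\M_n}(t)=h_n(t)/\bigl(r(t)^e(1-t)^{a-l}\bigr)$ must have no pole at $t=1$ (Artinianness) and $r(1)\neq 0$, this forces $a-l=0$. Your partial-fractions route would need to rule out cancellations among terms $A_k(t)\alpha_k(t)^n$ with different pole orders at $t=1$, which you have not done; the paper's evaluation-at-$t=1$ trick via \Cref{lem: t = 1} sidesteps this entirely.
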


In preparation of the proof, we establish the following observation. 

\begin{lem}
      \label{lem: t = 1}
For real numbers $a_1,\ldots,a_b$ and a non-zero polynomial $p(s) \in \R[s]$ with $\deg p(s) < b$, consider 
the formal power series expansion 
\[
\frac{p(s)}{\prod_{j =1}^b [1 - a_j s]}  = \sum_{n \ge 0} r_n s^n. 
\]
If each of $a_1,\ldots,a_b$ is at least 1, then $r_n \neq 0$ whenever $n \gg 0$. 
\end{lem}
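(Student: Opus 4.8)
The plan is to reduce to a partial fraction decomposition and then run an asymptotic analysis governed by the largest pole.

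First I would group the linear factors: let $b_1,\dots,b_k$ be the distinct values occurring among $a_1,\dots,a_b$, with multiplicities $m_1,\dots,m_k$, so $\sum_{i} m_i = b$. Since $\deg p(s) < b$, there is a partial fraction expansion
\[
\frac{p(s)}{\prod_{j=1}^b [1 - a_j s]} = \sum_{i=1}^k \sum_{\ell=1}^{m_i} \frac{c_{i,\ell}}{(1 - b_i s)^\ell}
\]
with real constants $c_{i,\ell}$. Expanding each summand by the binomial series $\frac{1}{(1-b_i s)^\ell} = \sum_{n\ge 0}\binom{n+\ell-1}{\ell-1} b_i^{\,n} s^n$ yields
\[
r_n = \sum_{i=1}^k P_i(n)\, b_i^{\,n}, \qquad P_i(n) = \sum_{\ell=1}^{m_i} c_{i,\ell}\binom{n+\ell-1}{\ell-1},
\]
so each $P_i$ is a polynomial in $n$ of degree at most $m_i-1$. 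Not all $P_i$ can be identically zero: otherwise every $r_n$ vanishes, whence $p(s) = \bigl(\prod_j (1-a_j s)\bigr)\sum_n r_n s^n = 0$, contradicting $p \ne 0$. Discarding the indices $i$ with $P_i \equiv 0$, I may assume the (nonempty) index set consists only of indices with $P_i \not\equiv 0$.

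The key step is to exploit that all $a_j \ge 1$, hence all $b_i$ are \emph{distinct positive reals}; therefore there is a unique largest one, $\beta = b_{i_0}$. Dividing through by $\beta^n$ gives
\[
\frac{r_n}{\beta^n} = P_{i_0}(n) + \sum_{i \ne i_0} P_i(n)\Bigl(\frac{b_i}{\beta}\Bigr)^{n}.
\]
Since $0 < b_i/\beta < 1$ and $P_i(n)$ grows only polynomially, the finite sum on the right tends to $0$ as $n \to \infty$. On the other hand $P_{i_0}$ is a nonzero polynomial, so $\liminf_{n\to\infty}|P_{i_0}(n)| > 0$ (a positive constant when $\deg P_{i_0} = 0$, and $\to\infty$ otherwise). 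Hence for $n \gg 0$ we have $|P_{i_0}(n)|$ strictly larger than the modulus of the residual sum, forcing $r_n/\beta^n \ne 0$ and therefore $r_n \ne 0$.

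I do not expect a genuine obstacle here; the only points requiring mild care are the reduction to nonzero polynomials $P_i$ and the observation that a nonzero polynomial is eventually bounded away from zero in absolute value. (If one wanted to sidestep the analytic estimate entirely, one could instead invoke linear independence of the functions $\{n^{j} b_i^{\,n}\}$ over $\R$ to conclude that $\sum_i P_i(n) b_i^{\,n}$ cannot vanish for all large $n$ without vanishing identically; but the dominant-term argument above is the quickest route.)
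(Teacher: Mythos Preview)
Your argument is correct and follows essentially the same route as the paper: partial fractions, expansion as $r_n=\sum_i P_i(n)\,b_i^{\,n}$, and then isolation of the dominant exponential term. Your handling is in fact slightly cleaner, since you discard indices with $P_i\equiv 0$ before selecting the largest base, whereas the paper asserts that every top coefficient $c_{j,b_j}$ is nonzero (which need not hold if $p$ and the denominator share a root) before taking the overall maximum; the endpoint of both arguments is the same.
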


\begin{proof}
Possibly after re-indexing the numbers $a_j$, we can write the given rational function $q(s)$ 
as 
\[
q (s) = \frac{p(s)}{\prod_{j =1}^m [1 - a_j s]^{b_j}}
\]
with positive integers $b_j$ and $a_1 > a_2 > \cdots > a_m \ge 1$. 
Using partial fractions, this can be rewritten as \[
q(s) = \sum_{j =1}^m \sum_{k = 1}^{b_j} \frac{c_{j, k}}{ [1 - a_j s]^k}
\] 
with real numbers $c_{j, k}$ satisfying $c_{j, b_j} \neq 0$ for $j = 1,\ldots,m$. It follows that 
\begin{align*}
q(s) & =  \sum_{j =1}^m \sum_{k = 1}^{b_j} c_{j, k} \sum_{n \ge 0}  \binom{n+k-1}{k-1} a_j^n s^n \\
& = \sum_{n \ge 0} \left [ \sum_{j = 1}^m  r_j (n) a_j^n \right ] s^n
\end{align*}
with polynomials $r_j (n) \in \R[n]$, 
\[
r_j (n) = \sum_{k = 1}^{b_j} \binom{n+k-1}{k-1} c_{j, k}. 
\]
Since $c_{j, b_j} \neq 0$, each $r_j (n)$ has degree $b_j -1$. In particular, $r_1 (n) \neq 0$. The last formula for $q(s)$ gives  $r_n = \sum_{j = 1}^m  r_j (n) \cdot  a_j^n$. If $a_1 = 1$, then $m$ must be one, and we get $r_n  \neq 0$ if $n \gg 0$. Otherwise, $a_1 > a_2 > \cdots > a_m \ge 1$ implies \[
\lim_{n \to \infty} \sqrt[n]{|r_n |} = \lim_{n \to \infty} \sqrt[n]{\left |\sum_{j = 1}^m  r_j (n) \cdot a_j^n \right | } = a_1,
\] 
which yields $r_n \neq 0$ whenever $n \gg 0$. 
\end{proof}

The conclusion of the lemma is not true without some assumption as shown by 
\[
\frac{1}{[1-s] \cdot [1 + s]} = \sum_{n \ge 0} s^{2 n}. 
\]

\begin{proof}[Proof of \Cref{thm:hilb artinian}] 
It is enough to establish the claim for an $\OI$-module, as it implies the result for $\FI$-modules. 
Let $\M$ be an $\OI$-module. 

First, assume $\M$ is Artinian. Consider the Hilbert series of $\M$ as described in 
\Cref{thm:shape Hilb f.g. OI-module}. We may assume that none of the irreducible factors of the 
denominator divides the numerator. Then the proof of \cite[Theorem 7.10]{NR} shows $c_1 = \cdots = c_b = 0$, that is,  
\[
H_{\M} (s, t) = \frac{g(s, t)}{(1-t)^a \cdot \prod_{j =1}^b [1 - s \cdot f_j (t)]}. 
\]
As a polynomial in $s$, the leading coefficient of $\prod_{j =1}^b [1 - s \cdot f_j (t)]$ is 
$r (t) = f_1 (t) \cdots f_b (t)$. Hence,  division with remainder  over $\Z[t]$ gives 
\begin{equation*}
r(t)^e \cdot g(s, t) = \tilde{h} (s, t) \cdot  \prod_{j =1}^b [1 - s \cdot f_j (t)] + (1-t)^l \cdot h(s, t)
\end{equation*}
with suitable integers $e, l \ge 0$ and polynomials $\tilde{h} (s, t), h (s, t) \in \Z[s, t]$ such that $(1-t)$ does not divide $h(s, t)$ and  the degree of $h (s, t)$ as a polynomial in $s$ is less than $b$. Note that $r (1) \neq 0$ because $f_j (1) \ge 1$ by \Cref{thm:shape Hilb f.g. OI-module}. 
Setting $\tilde{g} (s, t) = \frac{\tilde{h} (s, t)}{r(t)^e \cdot (1-t)^a}$, we get 
\begin{equation}
       \label{eq:after div with remainder}
H_{\M} (s, t) = \tilde{g} (s, t) +   \frac{h(s, t)}{r(t)^e \cdot (1-t)^{a - l} \cdot \prod_{j =1}^b [1 - s \cdot f_j (t)]}. 
\end{equation}
It remains to show $a - l = 0$. To this end consider the formal power series expansion of 
\[
 \frac{h(s, t)}{\prod_{j =1}^b [1 - s \cdot f_j (t)]} = \sum_{n \ge 0} h_n (t) s^n  
 \]
with polynomials $h_n (t) \in \Z[t]$. Since $f_j (1) \ge 1$ by \Cref{thm:shape Hilb f.g. OI-module} and $h(s, 1)$ is not the zero-polynomial by the choice of $l$, \Cref{lem: t = 1} gives $h_n (1) \neq 0$ 
whenever $n \gg 0$. Equation \Cref{eq:after div with remainder} yields for the Hilbert series of 
$\M_n$ with  $n \gg 0$, 
\[
H_{\M_n} (t) = \frac{h_n (t)}{r (t)^e \cdot (1-t)^{a - l}}. 
\]
By assumption, $\M_n$ has Krull dimension zero for such $n$. Hence $h_n (1) \neq 0$ and $r(1) \neq 0$ imply $a - l = 0$, as desired. 

Second, assume conversely that $\M$ has an equivariant Hilbert series as stated in (ii). Using  binomial series, one gets 
\[
\frac{h(s, t)}{\prod_{j =1}^b [1 - s \cdot f_j (t)]} = h(s, t) \cdot  \prod_{j = 1}^b \left[ \sum_{n \ge 0}  f_j (t) ^n s^n \right ] = \sum_{n \geq 0} h_n (t) s^n 
\]
with suitable polynomials $h_n (t) \in \Z[t]$. 
Setting again $r (t) = f_1 (t) \cdots f_b (t)$, we have $r (1) \neq 0$. Moreover, the assumption gives for the Hilbert series of $\M_n$ with $n \gg 0$, 
\[
H_{\M_n} (t) = \frac{h_n (t)}{r (t)^e}. 
\]
Since $r(1) \neq 0$, it follows that $\M_n$ has Krull dimension zero, i.e., it is Artinian. 
\end{proof}

The above results have consequences for graded Betti numbers. Let $P$ be a polynomial ring over $K$ with finitely many variables and its standard grading. Then every  finitely generated graded $P$-module $M$ has a graded minimal free resolution of the form 
\[
0 \to \bigoplus_{j} P^{\beta_{k, j}} (-j) \to \cdots \to \bigoplus_{j} P^{\beta_{1, j}} (-j) \to \bigoplus_{j} P^{\beta_{0, j}} (-j) \to M \to 0, 
\]
where each of the appearing free modules is finitely generated. Moreover, the numbers $\beta_{i, j} =  \beta_{i, j}^P (M)$ are uniquely determined by $M$ because 
$\beta_{i, j}^P (M) = \dim_K [\Tor_i^P (M, K)]_j$ and are called the \emph{graded Betti numbers} of $M$.  
The \emph{Castelnuovo-Mumford regularity} of $M$ is 
\[
\reg M = \max \{ j - i \; \mid \;  [\Tor_i^P (M, K)]_j \neq 0 \}. 
\]

The graded Betti numbers of $M$  are often displayed in the \emph{Betti table} of $M$ whose $(i, j)$-entry is $\beta_{i, i+j}^{P} (M)$. For example, consider the ideal $I = \la x^3, x^2 y, x y z, y^4 \ra$  of $P = K[x, y, z]$. The minimal graded free resolution of $M = P/I$ has the form
\[
0 \to P (-7) \to P^2 (-6) \oplus P^2 (-4) \to P (-4) \oplus P^3 (-3) \to P \to M \to 0. 
\]
In particular, one has $\reg M = 4$ and $\pd M = 3$. The Betti table of $M$ is \\[-8pt]
\begin{center}
\begin{minipage}{6.5cm}
\begin{verbatim}
        0    1    2    3
-------------------------
 0:     1    -    -    -
 1:     -    -    -    -
 2:     -    3    2    -
 3:     -    1    -    -
 4:     -    -    2    1 
\end{verbatim} 
\end{minipage} \\[10pt]
\end{center}

The following result shows in particular that the entries in the Betti tables of  $\M_n$ in a fixed position vary eventually polynomially with $n$.

\begin{thm}
    \label{thm:Betti numbers} 
Let $\M$ be a finitely generated graded  $\OI$-module over $\Pb = (\XO{1})^{\otimes c}$ or a finitely generated graded  $\FI$-module over $\Pb = (\XI{1})^{\otimes c}$. If $i \ge 0$ is any fixed integer, then  there are integers $j_1 (\M, i) < \cdots < j_l (\M, i)$ and polynomials $p_1 (t),\ldots,p_l(t) \in \Q[t]$ such that, for any $n \gg 0$, 
one has for the graded Betti numbers of $\M_n$
\[
\beta_{i, j}^{\Pb_n} (\M_n) = \begin{cases}
0 & \text{ if } j \notin \{j_1 (\M, i),\ldots,j_l (\M, i)\} \\
p_k (n) & \text{ if $ j = j_k (\M, i)$ for some $k \in [l]$}. 
\end{cases}
\]
\end{thm}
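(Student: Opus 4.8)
The plan is to reduce the statement about graded Betti numbers of $\M_n$ to the polynomiality result for vector space dimensions of fixed-degree components, \Cref{thm:polynomiality in fixed degree}, applied to the $\OI$-modules obtained by taking $\Tor$ over $\Pb$ against the residue field. First I would fix a finite free presentation of $\M$ and, more generally, construct a finite free $\OI$-resolution $\Fb_\bullet \to \M$ of $\M$ over $\Pb$: since $\M$ is finitely generated and $\Pb$ is noetherian (each $\Pb_n = (\XO{1}_K)^{\otimes c}_n$ has finitely many variables and $\Pb$ itself is noetherian by \cite[Corollaries 6.18, 6.19]{NR2}), we can iterate: kernels of maps between finitely generated free $\OI$-modules are again finitely generated (noetherianity of $\Fb$ by \cite[Theorem 6.17]{NR2}). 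The resolution need not be finite in length, but by \cite[Theorem 7.7]{NR2} (cited in the introduction) only finitely many homological and internal degrees contribute in any fixed column, so it suffices to work with a finite truncation. The key compatibility point is that evaluating the $\OI$-resolution at width $n$, i.e. $(\Fb_\bullet)_n \to \M_n$, gives a (not necessarily minimal) graded free resolution of the $\Pb_n$-module $\M_n$, because the category of $\OI$-modules is abelian with kernels and cokernels computed pointwise and each $(\Fb_i)_n$ is a finitely generated free $\Pb_n$-module.

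Next I would form the complex $\Fb_\bullet \otimes_{\Pb} K$, where $K$ denotes the "residue field" $\OI$-module $(\XO{0})$, i.e. $\Pb/\x_1\cdots$ in each width — more precisely, in width $n$ this is $(\Fb_i)_n \otimes_{\Pb_n} K$. This is a complex of finitely generated graded $\OI$-modules with zero differentials in each width? — no: the differentials of $\Fb_\bullet$ generally have entries in the maximal ideal only after choosing a \emph{minimal} resolution, which one cannot arrange $\OI$-equivariantly. So instead I would argue homologically: the homology of $(\Fb_\bullet)_n \otimes_{\Pb_n} K$ in homological degree $i$ and internal degree $j$ is exactly $[\Tor_i^{\Pb_n}(\M_n, K)]_j$, hence has dimension $\beta_{i,j}^{\Pb_n}(\M_n)$. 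The crucial step is then that each homology module $\mathcal{H}_i := H_i(\Fb_\bullet \otimes_{\Pb} K)$ is itself a finitely generated graded $\OI$-module over $(\XO{0})^{\otimes c} \cong (\XO{0})$ — equivalently, an $\OI$-module over a field in the sense of \cite{SS-14} — because it is a subquotient of the finitely generated $\OI$-module $\Fb_i \otimes_{\Pb} K$, and $\OI$-modules over a field are noetherian. Then $\beta_{i,j}^{\Pb_n}(\M_n) = \dim_K [(\mathcal{H}_i)_n]_j$, and \Cref{thm:polynomiality in fixed degree} (which applies since $(\XO{0})$ is the case $c' = 0$, or one invokes the cited \cite[Corollary 7.1.7]{SS-14} directly for $\OI$-modules over a field) gives that for each fixed internal degree $j$ this is eventually a polynomial in $n$.

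Finally I would organize the output: for the fixed $i$, \cite[Theorem 7.7]{NR2} guarantees a finite set $J$ of internal degrees $j$ outside of which $[(\mathcal{H}_i)_n]_j = 0$ for all $n$ in the relevant range; restricting to $j \in J$ and discarding those $j$ for which the eventual polynomial is identically zero yields the indices $j_1(\M,i) < \cdots < j_l(\M,i)$ and polynomials $p_1,\dots,p_l \in \Q[t]$ with $\beta_{i,j}^{\Pb_n}(\M_n) = p_k(n)$ for $j = j_k(\M,i)$ and $= 0$ otherwise, all for $n \gg 0$. The $\FI$-case reduces to the $\OI$-case exactly as in the proof of \Cref{cor:shape Hilb f.g. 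FI-module}, since a finitely generated graded $\FI$-module over $(\XI{1})^{\otimes c}$ is finitely generated as an $\OI$-module over $(\XO{1})^{\otimes c}$ by \cite[Remark 3.17]{NR2}, and the minimal free resolution of $\M_n$ over $(\XI{1})^{\otimes c}_n$ coincides with that over $(\XO{1})^{\otimes c}_n$ since these polynomial rings are literally equal. The main obstacle I anticipate is the bookkeeping around non-minimality of the $\OI$-resolution: one must be careful that $\beta_{i,j}^{\Pb_n}(\M_n)$ is read off as the dimension of a \emph{homology} group of $\Fb_\bullet \otimes_\Pb K$ at width $n$, not as the rank of a free module in the resolution, and that this homology is itself a well-behaved (noetherian) $\OI$-module over a field — the point being that $\Tor$ is computed uniformly by a single $\OI$-complex, so its homology inherits the $\OI$-structure.
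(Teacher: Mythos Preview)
Your proposal is correct and follows essentially the same approach as the paper: the paper's proof simply cites \cite[Lemma 7.4]{NR2} to obtain a finitely generated graded $\OI$-module $\Nb$ over $\Pb$ with $\Nb_n = \Tor_i^{\Pb_n}(\M_n, K)$, then invokes \cite[Theorem 7.7]{NR2} for the vanishing and \Cref{thm:polynomiality in fixed degree} for the polynomiality---your construction of $\Fb_\bullet \otimes_\Pb K$ and its homology is exactly what underlies that lemma. One simplification: there is no need to pass to $\OI$-modules over the constant algebra $(\XO{0})$, since $\mathcal{H}_i$ is already a finitely generated graded $\OI$-module over $\Pb$ (as a subquotient of $\Fb_i \otimes_\Pb K$, which is finitely generated over $\Pb$), so \Cref{thm:polynomiality in fixed degree} applies directly.
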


\begin{proof}
Let $\M$ be an $\OI$-module. Fix $i \in \N_0$. By \cite[Lemma 7.4]{NR2},  there is a finitely generated graded $\OI$-module $\Nb$ over $\Pb$ with $\Nb_n = \Tor_i^{\Pb_n} (\M_n, K)$ for each $n$. The vanishing statement for the graded Betti numbers follows by \cite[Theorem 7.7]{NR2}. 
The other claim is an immediate consequence of \Cref{thm:polynomiality in fixed degree} because $\beta_{i, j}^{\Pb_n} (\M_n) = \dim_K [\Nb_n]_j$. 

The argument is analogous if $\M$ is an $\FI$-module. 
\end{proof}

\begin{rem}
Following \cite{Ramos}, this result has consequences in the study of configuration spaces of graphs. If $G$ is any finite graph, then its $j$-th configuration space is the topological space 
$U\Fc_j (G) = \{(y_1,\ldots,y_j) \in G^j \; \mid \; y_i \neq y_k  \}/\Sym (j)$.  For $q \in \N_0$, the total $q$-th homology group of $G$ is $\Hc_q (G) = \bigoplus_{j \ge 0} H_q (U\Fc_j (G))$. It can be 
given the structure of a finitely generated graded module over a polynomial ring $A_G$ whose variables are indexed by the vertices of $G$ (see \cite{ADK}). Fix now two finite graphs $G$ and $H$ along with vertices $v_G \in G$ and $v_H \in H$. Define $G_n = G \bigvee  H^{\vee n}$  by wedging $G$ with $H$ $n$-times. If the number of edges  of $G_n$ is linear in $n$ for $n \gg 0$, then Ramos showed that, for any fixed $q \in \N_0$,  the assignment $[n] \mapsto \Hc_q (G_n)$ defines a finitely generated graded $\FI$-module  over $(\XI{1})^{\otimes c}$ for some $c$ (see \cite[Theorem 4.4 and Proposition 3.17]{Ramos}. Hence, \cite[Corollary 4.6]{Ramos} may be viewed as an instance of \Cref{thm:Betti numbers}. 
\end{rem}

For particular Artininian modules, the Castelnuovo-Mumford regularity becomes asymptotically constant. 

\begin{cor}
    \label{cor:invariants if finite support} 
Let $\M$ be a finitely generated graded  $\OI$-module over $\Pb = (\XO{1})^{\otimes c}$ or a finitely generated graded  $\FI$-module over $\Pb = (\XI{1})^{\otimes c}$. If there is an integer $\delta$ such that, for $n \gg 0$, one has $[\M_n]_j = 0$ whenever $j > \delta$, then there is an integer $C$ such that $\reg \M_n = C$ if $n \gg 0$, unless $\M_n = 0$ whenever $n \gg 0$. 
\end{cor}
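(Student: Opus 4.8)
The plan is to reduce the claim about Castelnuovo--Mumford regularity to a claim about the top nonzero internal degree of $\M_n$, and then read off the latter from \Cref{thm:polynomiality in fixed degree} one degree at a time. Since \Cref{thm:polynomiality in fixed degree} covers $\OI$- and $\FI$-modules uniformly, and the regularity input below is purely about graded modules over $\Pb_n$, I will carry out the argument once for both cases at the same time (if one prefers, an $\FI$-module over $(\XI{1})^{\otimes c}$ may first be viewed as a finitely generated $\OI$-module over $(\XO{1})^{\otimes c}$, as in the proof of \Cref{cor:shape Hilb f.g. FI-module}).

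The first step is to note that, because $\M$ is finitely generated, there is an integer $j_0$ — the least internal degree occurring in a finite homogeneous generating set — with $[\M_n]_j = 0$ for all $j < j_0$ and all $n$ (the generating morphisms are graded of degree zero and $\Pb_n$ sits in non-negative degrees). Combined with the hypothesis, for $n \gg 0$ the module $\M_n$ is a finitely generated graded $\Pb_n$-module concentrated in the finitely many degrees $j_0 \le j \le \delta$; hence $\M_n$ is a finite-dimensional $K$-vector space and in particular has finite length over $\Pb_n$. For a finite-length graded module $M$ over a standard graded polynomial ring one has $H^i_{\mathfrak{m}}(M) = 0$ for $i > 0$ and $H^0_{\mathfrak{m}}(M) = M$, where $\mathfrak{m}$ is the graded maximal ideal, so the local-cohomology description of regularity gives $\reg M = \max\{\, j : [M]_j \neq 0 \,\}$ (with $\max \emptyset = -\infty$). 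Thus for $n \gg 0$ we have $\reg \M_n = \max\{\, j : [\M_n]_j \neq 0 \,\}$, and it remains to show that this top degree is eventually constant, or else that $\M_n = 0$ for $n \gg 0$.

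The second step applies \Cref{thm:polynomiality in fixed degree} to each integer $j$ with $j_0 \le j \le \delta$: the function $n \mapsto \dim_K [\M_n]_j$ agrees with a polynomial in $n$ for $n \gg 0$, and by the refined form of that theorem (the leading coefficient is positive unless the dimension is eventually $0$) we obtain the dichotomy that either $[\M_n]_j = 0$ for all $n \gg 0$ or $[\M_n]_j \neq 0$ for all $n \gg 0$. Let $J \subseteq \{j_0, \ldots, \delta\}$ be the finite, $n$-independent set of degrees of the second type. If $J = \emptyset$, then taking $n$ past the finitely many relevant thresholds forces $[\M_n]_j = 0$ for every $j$, i.e.\ $\M_n = 0$ for $n \gg 0$, which is the excluded case. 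Otherwise set $C = \max J$; for $n \gg 0$ one has $[\M_n]_C \neq 0$, while $[\M_n]_j = 0$ for $C < j \le \delta$ (by the dichotomy) and for $j > \delta$ (by hypothesis), so $\reg \M_n = C$ for all $n \gg 0$.

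I do not expect a genuine obstacle; the conceptual ingredients — regularity of a finite-length module equals its top nonzero degree, and each fixed graded component of $\M$ grows either to $0$ or with strictly positive leading term — are already available. The only point needing care is the bookkeeping of the various ``$n \gg 0$'' thresholds: one must take the maximum of the threshold from the hypothesis and the thresholds produced by \Cref{thm:polynomiality in fixed degree} over the finitely many degrees $j \in \{j_0, \ldots, \delta\}$, so that all the conclusions used above hold simultaneously beyond a single bound on $n$.
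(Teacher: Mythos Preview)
Your proof is correct and follows essentially the same approach as the paper: observe that $\M_n$ is Artinian for $n \gg 0$, so $\reg \M_n$ equals the top nonzero degree, and then apply \Cref{thm:polynomiality in fixed degree} to the finitely many degrees below $\delta$. The paper's proof is simply a terser version of yours, citing \cite[Corollary~4.4]{Ei} for the regularity identity and leaving the finite scan over degrees (including the lower bound $j_0$ you make explicit) implicit.
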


\begin{proof}
By assumption, $\M_n$ is Artinian if $n \gg 0$. Hence it is well-known (see, e.g., \cite[Corollary 4.4]{Ei}) that 
 $\reg (\M_n) = \max \{j \in \Z \; \mid \; [\M_n]_j \neq 0 \} \leq \delta$. Now \Cref{thm:polynomiality in fixed degree} gives the claim. 
\end{proof}


\section{Concluding Remarks and Conjectures} 
     \label{sec:concluding remarks}

We begin by pointing out that all of the above results can be applied to a finitely graded $\OI$-module over $(\XO{1}_K)^{\otimes c}$ if $K$ is not necessarily a field, but  a standard graded noetherian algebra over a field $k$. 
The next goal is to describe and to explain a finite algorithm that computes the equivariant Hilbert series of a finitely generated graded $\OI$-module over $(\XO{1})^{\otimes c}$. This extends work in \cite{KLS} for ideals of $(\XO{1})^{\otimes c}$. Then we discuss questions that arise from the results in this paper. We hope they inspire future investigations.

Above we systematically avoided referring to the base ring $K$ to simplify notation. However, we will consider two base rings in the following statement. 

\begin{thm}
    \label{thm:base ring} 
Suppose $K$ is a standard graded finitely generated algebra over a field $k$. 
Let $\M$ be a finitely generated graded  $\OI$-module over $\Pb = (\XO{1}_K)^{\otimes c}$ Let $\Nb$ be the $\OI$- module over $\Pb$ obtained from $\M$ by setting $\Nb_0 = 0$ and $\Nb_n = \M_n$ if $n \ge 1$. Then there is an integer $r \ge 0$ such that $\Nb$ is a finitely generated graded $\OI$-module over $(\XO{1}_k)^{\otimes (c+r)}$. In particular, 
$\M$ has an equivariant Hilbert series of the form
\[
 \sum_{n \ge 0, j \in \Z} \dim_k [\M_n]_j s^n t^j  = t^l \frac{g(s, t)}{(1-t)^a \cdot \prod_{j =1}^b [(1-t)^{c_j} - s \cdot f_j (t)]},
\]
where $a, b, c_j$ are non-negative integers with $c_j \le c+r$, \ $l \in \Z$, \ $g (s, t) \in \Z[s, t]$, and each $f_j (t)$ is a polynomial in $\Z[t]$ satisfying $f_j (1) >  0$ and $f_j (0) = 1$, 
and the Krull dimension $\dim \M_n$ is given by linear function in $n$ for $n \gg 0$. 
\end{thm}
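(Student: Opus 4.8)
The strategy is to reduce the statement to the field case treated earlier in the paper, via the construction in \Cref{exa:reduction to field}. Write $K \cong k[y_1,\ldots,y_r]/J$ as a standard graded quotient, where $J$ is generated by homogeneous polynomials $f_1,\ldots,f_s$ of positive degree. By \Cref{exa:reduction to field}(i), the polynomial $\OI$-algebra $\Ab = (\XO{1}_K)^{\otimes c}$ satisfies, for every $n \ge 1$, a graded $k$-algebra isomorphism $\Ab_n \cong (\Pb'/\Ib)_n$, where $\Pb' = (\XO{1}_k)^{\otimes(c+r)}$ and $\Ib$ is the explicit $\OI$-ideal of $\Pb'$ generated by the linear forms $x_{i,1}-x_{i,2}$ for $c < i \le c+r$ together with the elements $f_i(x_{c+1,1},\ldots,x_{c+r,1})$. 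The only defect of this identification is in width $0$, which is precisely why we pass from $\M$ to $\Nb$ with $\Nb_0 = 0$ and $\Nb_n = \M_n$ for $n \ge 1$: then $\Nb$ becomes, width by width, a module over $\Pb'/\Ib$, hence an $\OI$-module over $\Pb' = (\XO{1}_k)^{\otimes(c+r)}$.

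\textbf{Key steps.} First I would make precise that $\Nb$ is genuinely an $\OI$-module over $\Pb'$ — one checks that the structure maps $\M(\eps)$, which are compatible with the $\Ab$-action, remain compatible with the $\Pb'$-action induced through the surjection $\Pb' \to \Pb'/\Ib$; this is a formal verification using that the isomorphisms $\Ab_n \cong (\Pb'/\Ib)_n$ of \Cref{exa:reduction to field} are natural in $n$. Second, I would argue that $\Nb$ is finitely generated over $\Pb'$: it is a quotient of a finitely generated $\OI$-module over $\Pb'/\Ib$ (since $\M$, hence $\Nb$, is finitely generated over $\Ab$ and generators pull back), and finitely generated modules over a quotient $\OI$-algebra are finitely generated over the ambient one; alternatively one invokes noetherianity of $(\XO{1}_k)^{\otimes(c+r)}$ from \cite[Corollaries 6.18 and 6.19]{NR2}. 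Third, I would apply \Cref{thm:shape Hilb f.g. OI-module} to $\Nb$ after a degree shift to make it generated in non-negative degrees (which contributes the factor $t^l$), obtaining the stated rational form with $c_j \le c+r$ and the constraints $f_j(1)>0$, $f_j(0)=1$. Finally, since $H_{\M}(s,t) - H_{\Nb}(s,t) = \sum_{j} \dim_k[\M_0]_j\, t^j$ is a polynomial in $t$ alone (and $\M_0 = K$ has a rational, indeed polynomial-over-$(1-t)^{\dim K}$, Hilbert series over $k$), adding it back only changes the numerator $g(s,t)$ and the power $a$, preserving the claimed shape. The statement about $\dim \M_n$ being eventually linear in $n$ then follows from \Cref{thm:growth invariants} applied to $\Nb$ over $\Pb'$, noting $\dim \M_n = \dim \Nb_n$ for $n \ge 1$.

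\textbf{Main obstacle.} The one genuinely non-routine point is justifying that $\Nb$ is finitely generated \emph{as an $\OI$-module over $\Pb'$}, rather than merely having each width Artinian-to-noetherian. The cleanest route is: a finite generating set $G \subset \coprod_n \M_n$ of $\M$ over $\Ab$ still generates $\Nb$ over $\Pb'/\Ib$ in the sense of $\OI$-modules — because the $\OI$-module structure over $\Pb'/\Ib$ induces, width by width, exactly the $\Ab_n$-module structure under the isomorphisms of \Cref{exa:reduction to field} — and a surjection $\bigoplus_i \Fo{d_i}_{\Pb'/\Ib} \to \Nb$ lifts to a surjection $\bigoplus_i \Fo{d_i}_{\Pb'} \to \Nb$ since the $\OI$-module structure of $\Nb$ factors through $\Pb' \twoheadrightarrow \Pb'/\Ib$. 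This is a soft argument but must be spelled out carefully because the equivalence ``$\Ab$-module structure'' $\leftrightarrow$ ``$\Pb'/\Ib$-module structure'' was only asserted width-wise in \Cref{exa:reduction to field}; one needs its naturality in the morphisms of $\OI$, which follows from the explicit description of the isomorphisms there but deserves an explicit remark.
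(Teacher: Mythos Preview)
Your approach is essentially the same as the paper's: reduce to the field case via \Cref{exa:reduction to field}, then invoke \Cref{thm:shape Hilb f.g. OI-module} and \Cref{thm:growth invariants}; you simply spell out the naturality and finite-generation checks that the paper leaves implicit. One small slip: you write ``$\M_0 = K$'' and call $H_{\M_0}(t)$ ``a polynomial in $t$ alone,'' but in general $\M_0$ is an arbitrary finitely generated graded $K$-module, so its Hilbert series over $k$ has the form $h(t)/(1-t)^d$ rather than being a polynomial---this is harmless for the conclusion, since such a term is absorbed into $g(s,t)$ and the power $(1-t)^a$ exactly as you say.
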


\begin{proof}
By assumption $K$ is isomorphic to $k[y_1,\ldots,y_r]/J$, where $J$ is a proper homogeneous ideal of the polynomial ring $k[y_1,\ldots,y_r]$ and every variable $y_i$ has degree one. Hence, using \Cref{exa:reduction to field}, it follows that $\Nb$ is a finitely generated graded  $\OI$-module over $(\XO{1}_K)^{\otimes (c+r)}$. Therefore, \Cref{thm:shape Hilb f.g. OI-module} and \Cref{thm:growth invariants} apply to $\Nb$. Note that the factor $t^l$ accounts for a degree shift that may be needed to obtain from $\Nb$ a module whose generators have non-negative degrees. 
\end{proof}

An analogous statement is true for any finitely generated graded  $\FI$-module over $\Pb = (\XI{1}_K)^{\otimes c}$. We leave the details to the reader. 
\smallskip 

For the remainder of this section, we return to the set-up where $K$ is a field and consider polynomial $\OI$- or $\FI$-algebras over $K$. 
In order to describe an algorithm for computing equivariant Hilbert series we first recall some background material. A \emph{(deterministic) finite 
automaton} on an alphabet $\Sigma$  is a $5$-tuple $\cA = (P, \Sigma, \delta, p_0, F)$ consisting  
of a  finite set $P$ of \emph{states},  an \emph{initial state} $p_0 \in P$, a set $F \subset P$ of 
\emph{accepting states}  and a \emph{transition map} $\delta \colon D \to P$, where $D$  is  some 
subset of $P \times \Sigma$.  The automaton $\cA$ \emph{recognizes} or \emph{accepts} a word 
$w=a_1a_2\dots a_s \in \Sigma^*$ if there is a sequence of states $r_0,r_1,\dots, r_s$ satisfying 
$r_0 = p_0$, $r_s \in F$ and 
\[
r_{j+1} = \delta (r_j, a_{j+1}) \quad \text{whenever } 0 \le j < s. 
\] 
The automaton $\cA$ \emph{recognizes a formal language} $\cL \subset \Sigma^*$ if $\cL$ is precisely the set of words in $\Sigma^*$ that are accepted by $\cA$. By \cite[Theorems 3.4 and 3.7]{HU}, a formal language is regular if and only if it is recognizable by a finite automaton. Their generating series are computable. 

Indeed, suppose $\cL$ is a formal language on $\Sigma$ that is recognized by a finite automaton $\cA = (P, \Sigma, \delta, p_0, F)$, where  $P$ has $N$ elements $p_0,\ldots,p_{N-1}$. For every letter $a \in \Sigma$ define a  $0-1$ matrix $M_{\cA, a}$ of size $N \times N$. Its entry at position $(i , j)$ is 1 precisely if there is a transition $\delta (p_j, a) = p_i$. Let $\mathbf{e}_i \in \mathbb{K}^N$ be the standard basis vector corresponding to   state $p_{i-1}$.  Let $\mathbf{u}=\sum\limits_{p_{i-1} \in F} \mathbf{e}_i \in \mathbb{K}^N$ be the sum of the basis vectors corresponding to the accepting states. Then, for any word $w=a_1\dots a_d$ with $a_i \in \Sigma$, one has 
\[
\mathbf{u}^{T} M_{\cA,a_d}\dots A_{\cA,a_1} \mathbf{e}_1  =\begin{cases} 
      1 & \text{ if } \cA \text{ accepts } w \\
      0 & \text{ if }  \cA \text{ rejects } w.
   \end{cases}
   \]
Let $\rho: \Sigma^*\rightarrow \Mon (K[s_1,\ldots,s_k])$ be any \emph{weight function}, that is, a monoid homomorphism. 
It follows (see, e.g, \cite[Section 4.7]{St}): 
\begin{align} 
     \label{eq:formule for generating function} 
P_{\cL,\rho} (s_1,\dots,s_k) & =\sum_{w\in\cL} \rho(w)  
=  \sum_{d\ge 0} \ \sum_{a_1,\ldots,a_d \in \Sigma}  \mathbf{u}^T \left( \rho (a_1\dots a_d)  M_{\cA,a_d}\dots A_{\cA,a_1} \right ) \mathbf{e}_1 \nonumber \\
& =\sum_{d \ge 0} \mathbf{u}^T\left(\sum_{a\in\Sigma} \rho(a) M_{\cA,a} \right)^d \mathbf{e}_1 
= \mathbf{u}^T \left( \id_{N}-\sum_{a\in\Sigma} \rho(a) M_{\cA,a} \right)^{-1}\mathbf{e}_1. 
\end{align}
Thus, the generating function $P_{\cL,\rho} (s_1,\dots,s_k)$ can be explicitly computed from the  automaton $\cA$ using linear algebra.  

Now we are ready to state the algorithm. Afterwards we provide additional explanations for some of its steps. 

\begin{alg}
      \label{alg:equiv Hilbert series} 
      \mbox{}
      
Input: A graded $\OI$-module $\M$ over $\Pb = (\XO{1})^{\otimes c}$ given by a finite generating set $E$ of homogeneous elements and a finite set of relations $S$. 

Output: Rational function for the equivariant Hilbert series of $\M$. 
\begin{enumerate}
\item Let $E = \{q_1,\ldots,q_k\}$  with $q_i \in \M_{d_i}$. By assumption, there is a graded surjection 
\[
\Fb = \bigoplus_{i=1}^k \Fo{d_i}(-\deg q_i) \to \M 
\]
whose kernel $\Nb$ is generated by $S$. Thus, $\M \cong \Fb/\Nb$. 

\item Fix a monomial order  $<$ on $\Fb$ that induces an order on the monomials in $\Pb$ as in \Cref{exa:mon-order}.  Compute a Gr\"obner basis of $\Nb$ and so its initial module $\ini_{<} (\Nb)$. It determines monomial submodules $\Nb^i (\deg q_i) \subset \Fo{d_i}$ such that 
\[
\Fb/\ini_{<} (\Nb)  \cong \bigoplus_{i=1}^k \Fo{d_i}(-\deg q_i)/\Nb^i.  
\]

\item For every $i \in [k]$, construct first a regular expression for the language $\mu_{d_i}^{-1} (\Mon (\Nb^i (\deg q_i)))$ and then a finite automaton $\cA_i$ that accepts this language. 

\item Compute the equivariant Hilbert series of each $\Nb^i (\deg q_i) \subset \Fo{d_i}$ by Formula \eqref{eq:formule for generating function}, using the automaton $\cA_i$ and the weight function from \Cref{thm:rational hilb monomial submod} (see \eqref{eq:weight function}). 

\item Return  
\begin{align}
     \label{eq:hilb from automaton} 
H_{\M} (s, t) =\sum_{i=1}^k  t^{\deg q_i} \cdot \left [  \frac{s^{d_i} (1-t)^c}{[(1-t)^c - s]^{d_i+1}}  - H_{ \Nb^i (\deg q_i)} (s, t) \right ]. 
\end{align}
\end{enumerate}
\end{alg}

\begin{rem}
      \label{rem:algorithm} 
Step (ii): Computation of a Gr\"obner basis can be done by adapting the equivariant version of   Buchberger's algorithm to the setting of $\OI$-modules (see the Division Algorithm in \cite[Definition 6.11]{NR2} and \cite{Draisma, HKL}). 

Step (iii): Every $\Nb^i (\deg q_i) \subset \Fo{d_i}$ has finitely many monomial generators. For any such generator $q$, it is not difficult to transform the formula for $\mu_{d_i}^{-1} (\Mon (\la q \ra) )$ given by  Propositions \ref{prop:bijection} and  \ref{prop:inverse of monomial} to a regular expression that is constructed algorithmically from $q$. Using this expression for every generator, there is an algorithm to compute a regular expression for the union of these languages, that is, for $\cL_i = \mu_{d_i}^{-1} (\Mon (\Nb^i (\deg q_i)))$ (see Identity \eqref{eq:union monomials}). Then one passes algorithmically to a finite automaton $\cA_i$ that recognizes $\cL_i$ (see \cite[Chapter 2]{HU}). 

Step (v): By \Cref{lem:compare initial modules},  $\M$ and $\Fb/\ini_{<} (\Nb)$ have the same equivariant Hilbert series. 
Hence, 
\begin{align*}
H_{\M} (s, t) & = \sum_{i=1}^k  t^{\deg q_i} H_{\Fo{d_i}/\Nb^i (\deg q_i)} (s, t) \\
& = \sum_{i=1}^k  t^{\deg q_i} \cdot \left [  \frac{s^{d_i} (1-t)^c}{[(1-t)^c - s]^{d_i+1}}  - H_{ \Nb^i (\deg q_i)} (s, t) \right ],  
\end{align*}
as stated in \Cref{eq:hilb from automaton}. Note that we used \Cref{prop:hilb free OI-mod} for the second equality. 
\end{rem} 


Finally, we discuss again free resolutions of modules over polynomial rings.  
In \Cref{sec:artinian mod} we focussed mostly on graded Betti numbers, that is, on the entries of Betti tables. Our results and previous results in special cases suggest also an expectation on the sizes of Betti tables. Consider first the number of rows.  

\begin{conj}
    \label{conj:reg growth}
If $\M$ is a finitely generated  graded $\OI$-module over $\Pb = (\XO{1})^{\otimes c}$ or a finitely generated graded graded $\FI$-module over $\Pb = (\XI{1})^{\otimes c}$, then the Castelnuovo-Mumford regularity of $\M_n$ is eventually a linear function in $n$, that is, there are  integers $a, b$ such that 
\[
\reg \M_n = a n + b \quad \text{ whenever } n \gg 0. 
\]
\end{conj}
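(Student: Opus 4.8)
The plan is to split the conjecture into three parts --- a linear lower bound, a linear upper bound, and an eventual stabilization to a single linear function --- passing, where convenient, to monomial $\OI$-submodules via Gröbner degeneration as in \Cref{sec:general case}. Write $\M \cong \Fb/\Nb$ with $\Fb$ a finitely generated free $\OI$-module and $\Nb$ a graded submodule, and fix a monomial order as in \Cref{exa:mon-order}. Since regularity never decreases under passing to an initial module, $\reg\M_n \le \reg\bigl(\Fb_n/\ini(\Nb)_n\bigr)$, and by \Cref{lem:compare initial modules} the right-hand side is the width-$n$ component of a quotient of $\Fb$ by the monomial $\OI$-submodule $\ini(\Nb)$; by \Cref{cor:compare Hilb series} the two modules have, for every $n$, the same Hilbert series, which is therefore governed by \Cref{thm:shape Hilb f.g. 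OI-module}.

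For the lower bound, recall that for a finitely generated graded module $M$ over a polynomial ring, with reduced Hilbert series $H_M(t) = g_M(t)/(1-t)^{\dim M}$, one has $\reg M \ge \deg g_M(t) - \dim M$, since the Hilbert function can disagree with the Hilbert polynomial in degree $\deg g_M(t) - \dim M$ but agrees with it above $\reg M$. By \Cref{thm:growth invariants}, $\dim\M_n = An + B$ for $n\gg0$, so it remains to estimate the numerator degree $\deg g_n(t)$ of the reduced form of the coefficient of $s^n$ in $H_\M(s,t)$. Expanding each denominator factor in \Cref{thm:shape Hilb f.g. OI-module} as $1/[(1-t)^{c_j} - s f_j(t)] = \sum_{n\ge0} f_j(t)^n s^n/(1-t)^{c_j(n+1)}$, one sees that $\deg g_n(t)$ grows like $(\max_j \deg f_j(t))\,n$ up to a bounded error, provided some $f_j$ survives in reduced form; this gives $\reg\M_n \ge a'n + b'$ with $a'\ge0$. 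When the Hilbert series forces $a'=0$, the lower bound degenerates and one must instead deduce eventual constancy of $\reg\M_n$ from the stabilization step below.

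The upper bound is the main obstacle. General bounds for regularity in terms of generator degrees and the number of variables (Galligo--Giusti) are doubly exponential in $\dim\Pb_n = cn$ and give nothing useful here, so a proof must exploit the $\OI$-structure. One natural route is an induction on width built on \Cref{prop:decomposition}: the exact sequences underlying \Cref{lem:repeated division}, combined with the decomposition \eqref{eq:decomp}, should express the graded Betti numbers of $\Fb_n/\M_n$ in terms of those of $\Gb_{n-1}/\Qb'_{n-1}$ and $\Fb_{n-1}/\Qb''_{n-1}$; if one can show that such a width reduction increases regularity by at most a constant depending only on $\M$, iterating $O(n)$ times yields $\reg\M_n \le a''n + b''$. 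The difficulty is that \Cref{prop:decomposition} is not functorial and the induction it carries is on the auxiliary size invariant $\si(\M)$ rather than on width, so a uniform per-step regularity increment is not automatic. An alternative is to work with a fixed finitely generated free $\OI$-resolution $\Gb_\bullet \to \M$ and show that the $i$-th term $\Gb_i$ is generated in internal degree $O(i)$; then $(\Gb_\bullet)_n$ is a (non-minimal) graded free resolution of $\M_n$ whose $i$-th term is generated in degree $O(i)$, and since $i$ ranges only up to $\pd_{\Pb_n}(\M_n) \le cn$, one obtains $\reg\M_n = O(n)$. Establishing the requisite linear bound on the degrees of these $\OI$-syzygies seems to be the crux.

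For the eventual linearity, combine \Cref{thm:Betti numbers} with the uniform vanishing of \cite[Theorem 7.7]{NR2}: for each fixed homological degree $i$, the internal degrees $j$ with $\beta_{i,j}^{\Pb_n}(\M_n)\ne0$ lie in a fixed finite interval, on which $\beta_{i,j}^{\Pb_n}(\M_n)$ is eventually polynomial in $n$ and has positive leading coefficient whenever it is not eventually zero; hence for each pair $(i,j)$ the truth of ``$\beta_{i,j}^{\Pb_n}(\M_n)\ne0$'' stabilizes for $n\gg0$. Granting a linear upper bound, only finitely many pairs $(i,j)$ with $j-i$ near the eventual value of $\reg\M_n$ can contribute, so for $n\gg0$ the function $n\mapsto\reg\M_n$ is the maximum of finitely many functions that are each eventually constant or eventually linear in $n$, coming from the ``upper boundary'' of the stabilized Betti-table shape; a short argument shows such a maximum is itself eventually a single linear function of $n$. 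The genuine content of the conjecture is thus concentrated in the linear upper bound, together with the assertion that this upper boundary is asymptotically a straight line of slope $a$ --- the part for which, at present, only experimental evidence exists.
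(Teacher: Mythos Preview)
The statement you are addressing is \Cref{conj:reg growth}, which the paper explicitly records as a \emph{conjecture}; there is no proof in the paper to compare against. The paper only offers supporting evidence: a coarse linear upper bound via \cite{HT}, the special case of \Cref{cor:invariants if finite support}, and the resolved monomial $\FI$-ideal case due to Murai and Raicu. Your write-up, to its credit, is candid that the linear upper bound is the essential missing step and that the remainder is heuristic.

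That said, your stabilization step (the third part) is weaker than you present it. You assert that, granting a linear upper bound, ``only finitely many pairs $(i,j)$ with $j-i$ near the eventual value of $\reg\M_n$ can contribute'', and hence $\reg\M_n$ is eventually the maximum of finitely many eventually-linear functions. This is not justified: the homological index $i$ ranges up to $\pd_{\Pb_n}\M_n$, which by \Cref{conj: pd growth} is expected to grow linearly in $n$, so the family of pairs $(i,j)$ under consideration is not finite independently of $n$. \Cref{thm:Betti numbers} and \cite[Theorem~7.7]{NR2} control, for each \emph{fixed} $i$, the set of $j$ with $\beta_{i,j}^{\Pb_n}(\M_n)\neq 0$ and the eventual polynomiality of each individual Betti number, but they say nothing uniform across the growing range of $i$. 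Thus even after a linear upper bound is established, deducing that $\reg\M_n$ is eventually a \emph{single} linear function---rather than, say, bounded between two linear functions---requires further input about the upper edge of the Betti table as $i$ varies with $n$. Your lower-bound sketch via the numerator degree of $H_{\M_n}(t)$ is also only a lower bound on the so-called $a$-invariant shift, and the possibility of cancellation when combining the expansions of the factors $[(1-t)^{c_j}-s f_j(t)]^{-1}$ means the asserted growth rate $(\max_j\deg f_j)\,n$ for $\deg g_n(t)$ is not automatic. In short, the conjecture remains open, and your outline does not close it; the difficulties are not confined to the upper bound.
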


In the case of an $\OI$-ideal this specializes to \cite[Conjecture 1.1]{LNNR}. 

\begin{rem}
(i) Using \cite[Theorem 1.2]{HT}, one can obtain a coarse upper linear bound $\reg \M_n \le a n + b$ if $n \gg 0$ for some integers $a, b$. Extending methods in \cite{LNNR}, it is possible to establish a better  linear upper bound (see \cite{NR3}). 

(ii) Besides \Cref{cor:invariants if finite support}, the strongest evidence for \Cref{conj:reg growth} is known in the case of ideals (see \cite{LNNR}). If $c = 1$ and $\M= \Ib$ is a monomial $\FI$-ideal of $\XI{1}$, then \Cref{conj:reg growth} has been shown independently by Murai \cite{Mu} and by Raicu \cite{Ra19}. The conjecture is open for monomial $\OI$-ideals of $\XO{1}$. 
Note that 
Example 5.6 by Hop Nguyen in \cite{Mu} shows that the Betti tables of the ideals $\Ib_n$ are more complex for a monomial $\OI$-ideal than for a monomial $\FI$-ideal.  
\end{rem}

Next, we consider the number of columns in the Betti table of $\M_n$ as $n$ varies. The expectation is also meaningful for modules that are  not necessarily graded. 

\begin{conj}
    \label{conj: pd growth}
If $\M$ is a finitely generated   $\OI$-module over $\Pb = (\XO{1})^{\otimes c}$ or a finitely generated  $\FI$-module over $\Pb = (\XI{1})^{\otimes c}$, then the projective dimension of $\M_n$ is eventually a linear function in $n$, that is, there are  integers $A, B$ such that 
\[
\pd_{\Pb_n} \M_n = A n + B \quad \text{ whenever } n \gg 0. 
\]
\end{conj}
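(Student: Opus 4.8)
The plan is to reformulate the conjecture in terms of depth and then to set up an inductive recursion for depth that parallels the treatment of Hilbert series in \Cref{sec:denominator}. By the Auslander-Buchsbaum formula one has, for every $n$,
\[
\pd_{\Pb_n} \M_n \;=\; \dim \Pb_n - \depth_{\Pb_n} \M_n \;=\; cn - \depth_{\Pb_n} \M_n,
\]
and since $\dim \Pb_n = cn$ is already linear in $n$, the conjecture is equivalent to the statement that $\depth_{\Pb_n} \M_n$ is eventually an affine function of $n$. One side is almost free: by \Cref{thm:growth of dim} we have $\dim \M_n = An+B$ for $n \gg 0$, while $0 \le \depth_{\Pb_n} \M_n \le \dim \M_n$, so the sequence $\depth_{\Pb_n}\M_n$ is trapped between $0$ and an eventually linear function. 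A linear \emph{lower} bound for $\depth$ --- equivalently a linear upper bound for $\pd$ --- can be sought via Gr\"obner degeneration: writing $\M \cong \Fb/\Nb$ with $\Fb$ a finitely generated free $\OI$-module over $\Pb = (\XO{1})^{\otimes c}$, \Cref{lem:compare initial modules} gives $\pd_{\Pb_n}\M_n \le \pd_{\Pb_n}(\Fb_n/\ini(\Nb)_n)$, and the right-hand module is a direct sum of quotients $\Pb_n/I_\pi$ by monomial ideals whose generators have bounded degree; for these one would invoke combinatorial bounds on projective dimension (polarization, Terai's Alexander-duality formula, and an eventual linear bound for the Castelnuovo-Mumford regularity of the dual, cf. the discussion following \Cref{conj:reg growth} and \cite{HT}, \cite{NR3}). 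The remaining content of the conjecture is that the trapped, eventually bounded sequence $\depth_{\Pb_n}\M_n$ is in fact eventually affine.

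For that, I would work in the monomial case and build a recursion for depth from the ingredients of \Cref{sec:denominator}. Fixing the variable block $\x_1 = (x_{1,1},\dots,x_{c,1})$, the short exact sequences obtained by multiplying by the components of $\x_1$ on the iterated colon modules $\M_n : x_{\lpnt, 1}^e$ (as in \Cref{lem:repeated division}), together with the splitting
\[
\Fb_n/(\M_n : x_{\lpnt, 1}^e + \x_1 \Fb_n) \;\cong\; \Gb_{n-1}/\Qb'(e)_{n-1} \,\oplus\, \Fb_{n-1}/\Qb''(e)_{n-1}
\]
of \Cref{prop:decomposition}, should express $\depth_{\Pb_n}\M_n$ through the depths of the width-$(n-1)$ components of the finitely generated monomial $\OI$-modules $\Qb'(e)$ over $\Fo{d-1}$ and $\Qb''(e)$ over $\Fo{d}$. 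Since $\Qb'(e)$ strictly lowers $d$, while $\Qb''(e)$ keeps $d$ fixed but does not increase the complexity measure $\si$ of \Cref{def:size} (by \Cref{prop:decomposition}(b) and \Cref{rem:size}), a double induction on the pair $(d, \si(\M))$, exactly as in the proof of \Cref{thm:shape Hilb mon submodule}, would reduce the claim to the base cases $\M = 0$ and $\M_n = \Fb_n$, where it is clear. Finally, the two reductions of \Cref{sec:general case} (from monomial submodules of $\Fo{d}$ to arbitrary finitely generated graded $\OI$-modules, and then to $\FI$-modules) would carry the result over to the full generality of the statement.

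The hard part will be making the depth bookkeeping in this recursion \emph{exact} rather than merely a pair of inequalities. Unlike the equivariant Hilbert series, depth and projective dimension are not recorded by any invariant that assembles into an $\OI$-module: the maximal ideals $\mathfrak{m}_n$ and the local cohomology modules $H^i_{\mathfrak{m}_n}(\M_n)$ grow with $n$, and no generic linear form is available uniformly in $n$. Consequently \Cref{prop:decomposition}, which matches Hilbert functions width by width, does not by itself control how a maximal $\M_n$-regular sequence descends to the width-$(n-1)$ summands; one needs an additional ingredient --- a careful analysis of the connecting homomorphisms in the sequences of \Cref{lem:repeated division}, or a direct study of the $\Pb_{n-1}$-module structure of $H^i_{\mathfrak{m}_n}(\M_n)$ --- to upgrade the width reduction to an exact depth recursion. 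Bridging exactly this gap is what remains open, which is why the statement is offered only as a conjecture.
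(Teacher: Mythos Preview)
This statement is a \emph{conjecture}; the paper does not prove it. It only records partial evidence: linear upper and lower bounds on $\pd_{\Pb_n}\M_n$ from \cite{LNNR2,NR3}, and the special case of monomial $\FI$-ideals with $c=1$ due to \cite{Mu,Ra19}. You recognize this in your final paragraph, and the gap you name there --- that depth does not pass cleanly through the short exact sequences of \Cref{lem:repeated division} and the splitting of \Cref{prop:decomposition} --- is genuine and is exactly why the statement remains open.

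There is, however, a second gap in your outline that you do not flag. You write that ``the two reductions of \Cref{sec:general case} \ldots\ would carry the result over to the full generality of the statement.'' Those reductions work for Hilbert series because Gr\"obner degeneration preserves Hilbert functions (\Cref{cor:compare Hilb series}). Projective dimension is \emph{not} preserved: one only has the inequality $\pd_{\Pb_n}\M_n \le \pd_{\Pb_n}(\Fb_n/\ini(\Nb)_n)$, which can be strict. So even a complete proof of the conjecture for monomial submodules of $\Fo{d}$ would not, along this route, yield the conjecture for arbitrary finitely generated graded $\OI$-modules. (The $\FI$-to-$\OI$ passage is harmless, since the underlying $\Pb_n$-modules $\M_n$ are unchanged; it is the monomial reduction that breaks.) Your earlier use of the degeneration to obtain a linear \emph{upper} bound on $\pd$ is legitimate precisely because it only needs the inequality, but the final step of your plan silently assumes equality.
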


In the case of an $\OI$-ideal this specializes to \cite[Conjecture 1.3]{LNNR2}. 

\begin{rem}
(i) Extending work in \cite{LNNR2}, $\pd_{\Pb_n} \M_n$ can be bounded above and below by linear functions in $n$ (see \cite{NR3}). 

(ii) If $c = 1$ and $\M= \Ib$ is a monomial $\FI$-ideal of $\XI{1}$, then \Cref{conj:reg growth} is true  due to Murai \cite{Mu} and to Raicu \cite{Ra19}.
\end{rem}

Notice that in general, for  an $\OI$- or $\FI$-module $\M$,  invariants of $\M_n$ such as Castelnuovo-Mumford regularity, projective dimension or graded Betti numbers, grow unboundedly with $n$. Thus, recent boundedness results in \cite{AHo, Draisma-noeth, ESS-big, ESS}  do not apply directly to the categories of finitely generated $\OI$- or $\FI$-modules over a corresponding noetherian polynomial algebra. 


\end{document}